\documentclass[reqno,a4paper, 11pt]{amsart}

\usepackage[a4paper=true,pdfpagelabels]{hyperref}
\usepackage{graphicx}
\usepackage{latexsym}
\usepackage{mathabx}
\usepackage{amsmath}
\usepackage{amssymb}
\usepackage[T1]{fontenc}
\usepackage[utf8]{inputenc}
\usepackage{lmodern}
\usepackage{verbatim}

\usepackage{color}   

\newtheorem{theorem}{Theorem}[section]
\newtheorem{lemma}[theorem]{Lemma}
\newtheorem{corollary}[theorem]{Corollary}
\newtheorem{proposition}[theorem]{Proposition}

\newtheorem{lettertheorem}{Theorem}
\newtheorem{letterlemma}[lettertheorem]{Lemma}

\theoremstyle{definition}

\theoremstyle{remark}

\numberwithin{equation}{section}

\newcommand{\set}[1]{\left \{#1\right \}}
\newcommand{\abs}[1]{\left | #1\right |}
\newcommand{\nm}[1]{\left \| #1 \right \|}

\newcommand{\B}{\mathcal{B}}
\newcommand{\D}{\mathbb{D}}
\newcommand{\DD}{\widehat{\mathcal{D}}}
\newcommand{\Dd}{\widecheck{\mathcal{D}}}

\newcommand{\DDD}{\mathcal{D}}

\newcommand{\N}{\mathbb{N}}
\newcommand{\R}{\mathbb{R}}

\newcommand{\C}{\mathbb{C}}

\renewcommand{\phi}{\varphi}

\newcommand{\T}{\mathbb{T}}

\newcommand{\Z}{\mathbb{Z}}

\def\a{\alpha}               
           
     \def\om{\omega}      
       \def\t{\theta}       
                  \def\z{\zeta}

\def\omg{\widehat{\omega}}
\def\nug{\widehat{\nu}}

\renewcommand{\H}{\mathcal{H}}

\newenvironment{Prf}{\noindent{\emph{Proof of}}}
{\hfill$\Box$ }

\newcommand{\Apq}[2]{A^{#1,#2}_\om}

\begin{document}
	\title[ Improvement of a Hardy-Littlewood inequality ]{ Improvement of a Hardy-Littlewood inequality and applications to the boundedness of analytic paraproducts on mixed norm spaces}
	\author[A. moreno]{\'Alvaro Miguel Moreno}
	\address{Departamento de Analisis Matem\'atico, Universidad de M\'alaga, Campus de Teatinos, 
		29071 Malaga, Spain}
	\email{alvarommorenolopez@uma.es}
	
	\author[J. A. Pel\'aez]{Jos\'e \'Angel Pel\'aez}
	\address{Departamento de Analisis Matem\'atico, Universidad de M\'alaga, Campus de Teatinos, 
		29071 Malaga, Spain}
	\email{japelaez@uma.es}
	
	\thanks{This research is supported in part by Ministerio de Ciencia e Innovaci\'on, Spain, project PID2022-136619NB-I00; La Junta de Andaluc{\'i}a, project FQM210.}
	\date{\today}

\subjclass[2020]{30A99, 30H10, 47G10}

\keywords{Hardy-Littlewood inequality, Analytic paraproduct, Mixed norm space, Radial doubling weight}

	\begin{abstract}
        Let $\H(\D)$ denote the space of analytic functions in the unit disc $\D=\{z\in\C:|z|<1\}$. For $0<p<\infty$ and $f\in\H(\D)$, let be $$M_p^p(r,f)=\int_0^{2\pi}\abs{f(re^{i\t})}^p \frac{d\theta}{2\pi} \quad\text{and} \quad M_\infty(r,f) = \sup\limits_{\abs{z}=r}\abs{f(z)}.$$ 
		For $0<p<q\leq \infty$, Hardy and Littlewood  proved in \cite{HardyLittlewood} the prevalent inequality
        $$
        M_q(r,f)\le C(p,q)\frac{M_p(\rho,f)}{(\rho-r)^{\frac{1}{p}-\frac{1}{q}}}, \quad 0\leq r<\rho\leq 1, \, f\in\H(\D).
        $$
In this paper, 
        we obtain an improvement of this well-known inequality which is employed to characterize the symbols $g\in\H(\D)$ such that the analytic paraproducts $$T_gf(z)=\int_0^z f(\z)g'(\zeta)d\z,\quad S_gf(z)=\int_0^z f'(\z)g(\zeta)d\z\;\text{ and }\; M_gf(z)=f(z)g(z),$$ are bounded between two different mixed-norm spaces $$A^{p,q}_\omega=\left \{ g\in \H(\D):\, \int_0^1 M_p^q(r,g) \om(r)\,dr<\infty\right \}$$  induced by a radial doubling weight $\omega$. En route to the proof of these characterizations, we consider an open Carleson measure problem 
posed by Luecking in \cite[p. 347]{luecking} and we solve it in a meaningful particular case.
	\end{abstract}
	\maketitle
	\section{Introduction}
	Let $\H(\D)$ denote the space of analytic functions in the unit disc $\D=\{z\in\C:|z|<1\}$ and  $\mathbb{T}=\{z\in\C:|z|=1\}$.
	For $0<p\leq\infty$, the classical Hardy space $H^p$ consists of $f\in\H(\D)$, such that
	$$
	\nm{f}_{H^p}=\sup_{0\leq r <1} M_p(r,f) <\infty,
	$$ 
		where 
	\begin{equation*}\begin{split}
			M_p(r,f) &=\left (\int_0^{2\pi}\abs{f(re^{i\t})}^p \frac{d\theta}{2\pi} \right )^{\frac{1}{p}},\quad 0\leq r < 1,
			\\  M_\infty(r,f) &= \sup\limits_{\abs{z}=r}\abs{f(z)}, \quad 0\leq r < 1.
	\end{split}\end{equation*}
	
	It is well known that for $0<p<q\leq\infty$, there exists a constant $C=C(p,q)>0$ such that
	\begin{equation}\tag{HL}\label{HL}
	M_q(r,f)\leq \frac{C}{(\rho-r)^{\frac{1}{p}-\frac{1}{q}}}M_p(\rho,f),\quad 0\leq r<\rho\leq 1, \, f\in\H(\D),
	\end{equation}
	where $M_p(1,f)=\nm{f}_{H^p}$ and
 we use the convention $\frac{1}{\infty}=0$. This inequality, proved by Hardy and Littlewood in \cite[Theorem 27]{HardyLittlewood} (see also \cite[Theorem 5.9]{Duren}), has served over the last century as a fundamental and widely used tool in numerous rsearch areas of function and operator theory on spaces of analytic functions, see for instance   \cite{Duren,DS,JVA, Zhu} and the references therein. 
 It is worth mentioning that the refinement of classical inequalities in spaces of analytic functions on $\D$ has become an area of intense research activity in recent years. In particular, there is a significant line of research devoted either to finding the best possible constants \cite{DJV,LMN, Perala} or to establishing contractive inequalities \cite{BrevigOrtegaSeipZhao,Kulikov,Llinares}, due to their numerous applications, for instance in number theory.
In this paper, however, our goal is not to determine the optimal constant in \eqref{HL}, but rather to obtain a refinement that enables us to describe the action of certain operators on analytic weighted mixed norm spaces. This leads us to improve the  inequality \eqref{HL} by replacing the integral mean $M_p(\rho,f)$ in 
the right-hand side of \eqref{HL} with an $\ell^q$-norm of the $L^p$-norms of $f_\rho(z)=f(\rho z)$ over intervals of equal length forming a partition of $[0,2\pi]$, determined uniquely by the distance between $r$ and $\rho$. To state this precisely, we introduce the following notation.
	For each $N\in\N$ consider the uniformly distributed partition of $[0,2\pi]$
    , $\{I_{N,l}\}_{l=0}^{N-1}$, defined by
	$$
	I_{N,l} = \left [2\pi\frac{l}{N},2\pi\frac{l+1}{N} \right ],\quad l=0,\ldots N-1.
	$$
	For $0<p<\infty$ and a measurable function $f:\overline{\D}\to \C$ and $0\leq r\leq 1$ we write 
	$$
	\left (f_{[p]}(r) \right )_{N,l} = \left (\int_{I_{N,l}} \abs{f(re^{i\t})}^p\frac{d\theta}{2\pi} \right )^{\frac{1}{p}}.
	$$

	If $x\in \R$, we denote by $E(x)$  the integer number such that  $E(x)\le x<E(x)+1$. 
	\begin{theorem}\label{th1: improvement} 
		Let $0<p<q\leq\infty$. Then,  there exists a constant $C=C(p,q)>0$ such that
		\begin{equation}\label{eq: th improve}
		M_q(r,f)\leq \frac{C}{(\rho-r)^{\frac{1}{p}-\frac{1}{q}}} \nm{ \left\{\left (f_{[p]}(\rho) \right )_{N(r,\rho) ,l}\right \}_{l=0}^{N(r,\rho) -1}}_{\ell^q}\quad 0\leq r<\rho\leq 1, \, f\in\H(\D),
		\end{equation}
where $N(r,\rho)=E\left ( \frac{1}{\rho-r}\right )$.
Moreover, for each $N\in\N$
\begin{equation}\label{eq:trivial}
		\nm{ \left\{\left (f_{[p]}(1) \right )_{N,l} \right\}_{l=0}^{N-1}}_{\ell^q} \leq \nm{f}_{H^p}, \quad f\in H^p.
		\end{equation}
		However, 
	\begin{equation}\label{eq:improve1}
		\sup\limits_{f\in H^p,N\in \N}  \frac{\nm{f}_{H^p}}{	\nm{\left\{\left (f_{[p]}(1) \right )_{N,l}\right\}_{l=0}^{N-1}}_{\ell^q}}=\infty.
		\end{equation}
	\end{theorem}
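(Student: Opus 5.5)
The inequality \eqref{eq:trivial} is immediate, and I would dispose of it first. Write $a_l=\left (f_{[p]}(1)\right )_{N,l}$. Since the intervals $I_{N,l}$ partition $[0,2\pi]$ up to endpoints, we have $\sum_l a_l^p=\nm{f}_{H^p}^p$ (boundary values). Because $q>p$, the embedding $\ell^p\subset\ell^q$ with norm one gives $\nm{\{a_l\}}_{\ell^q}\le\nm{\{a_l\}}_{\ell^p}=\nm{f}_{H^p}$.

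For \eqref{eq:improve1}, I would take $N$ large and $f$ spread evenly over all arcs. Fix $f\equiv 1$. Then $a_l=N^{-1/p}$ for every $l$, so $\nm{\{a_l\}}_{\ell^q}=N^{1/q-1/p}$, while $\nm{f}_{H^p}=1$. The ratio is $N^{1/p-1/q}\to\infty$ (for $q=\infty$ it is $N^{1/p}$).

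For \eqref{eq: th improve}, I would use a local version of the subharmonicity argument behind \eqref{HL}. Set $\delta=\rho-r$ and $N=N(r,\rho)\asymp 1/\delta$ (the case $\delta>1/2$ is trivial up to constants, since then $N=1$ and the claim is exactly \eqref{HL}). The steps are as follows.

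\begin{enumerate}
\item Apply \eqref{HL} at the point level. For $z=re^{i\t}$, subharmonicity of $|f|^p$ on a disc of radius $\asymp\delta$ centred at $z$ and contained in $\{|w|<\rho\}$ gives
$$|f(re^{i\t})|^p\lesssim \frac1{\delta}\int_{|\phi-\t|\lesssim\delta}|f(\rho e^{i\phi})|^p\,d\phi.$$
The cleanest way to see this is via the Poisson integral of $|f|^p$ on $|w|=\rho$ (using the least harmonic majorant). The Poisson kernel $P_{r/\rho}(\t-\phi)\lesssim \delta/(\delta^2+(\t-\phi)^2)$ is not compactly supported, so the bound becomes
$$|f(re^{i\t})|^p\lesssim\sum_{k}\frac{\delta^{-1}}{1+d(l_\t,k)^2}\,a_k^p,$$
where $a_k=\left(f_{[p]}(\rho)\right)_{N,k}$, $l_\t$ is the arc containing $\t$, and $d$ is the cyclic distance between indices.

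\item Integrate in $\t$. Taking the power $q/p$ and integrating over each arc $I_{N,l}$ (of length $\asymp\delta$) yields
$$M_q^q(r,f)\lesssim \delta\,\delta^{-q/p}\sum_l\Big(\sum_k K(l-k)a_k^p\Big)^{q/p},\qquad K(j)=\frac{1}{1+j^2}.$$

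\item Apply Young's inequality on $\Z_N$. Since $K\in\ell^1$ uniformly in $N$ and $q/p>1$,
$$\nm{K*(a^p)}_{\ell^{q/p}}\le\nm{K}_{\ell^1}\nm{a^p}_{\ell^{q/p}}=C\,\nm{a}_{\ell^q}^p.$$
Hence $M_q(r,f)\lesssim \delta^{1/q-1/p}\nm{a}_{\ell^q}$, as required. For $q=\infty$, one takes the supremum instead and uses $\sum_k K\,a_k^p\le C\max_k a_k^p$.
\end{enumerate}

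The main obstacle is step 1: making the pointwise bound rigorous with a kernel that is uniformly summable in arc units. I would use the Poisson kernel of the disc of radius $\rho$ together with $1-r/\rho\asymp\delta$, and then compare $\sup_{\phi\in I_{N,k}}P$ with $\delta^{-1}K(l-k)$. Some care is needed when $\rho=1$ (pass to boundary values via $r'\uparrow1$) and when the estimate $1-r/\rho\asymp\delta$ degenerates for small $\rho$. In the latter case $\delta\ge$ a constant, so $N$ is bounded and \eqref{HL} already suffices.
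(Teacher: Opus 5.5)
\textbf{The gap: small $\rho$.} Steps 2--3, and your arguments for \eqref{eq:trivial} and \eqref{eq:improve1}, are fine. The regime of small $\rho$ in \eqref{eq: th improve}, however, is mishandled.

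You assert that when $1-r/\rho\asymp\delta$ degenerates, $\delta$ is bounded below, so \eqref{HL} suffices. This is backwards. Since $\delta=\rho-r\le\rho$, letting $\rho\to0$ forces $\delta\to0$ and $N=E(1/\delta)\to\infty$. In that regime \eqref{HL} cannot replace the amalgam norm: for $f\equiv1$, $M_p(\rho,f)$ exceeds $\nm{\{(f_{[p]}(\rho))_{N,l}\}_l}_{\ell^q}$ by the factor $N^{\frac1p-\frac1q}$.

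Your kernel comparison also breaks there. With $u=1-r/\rho=\delta/\rho$ one has $P_{r/\rho}(t)\asymp u/(u^2+t^2)$. In arc units this is comparable to $\delta^{-1}m/(m^2+j^2)$ with $m=1/\rho$, which exceeds $\delta^{-1}(1+j^2)^{-1}$ by a factor $\asymp m$ when $|j|\gtrsim m$. For instance, for $r=0$ the kernel is $\equiv1$, which is not $\lesssim\delta/(\delta^2+t^2)$.

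There are two ways to repair this.
\begin{itemize}
\item Within your framework: $P_x(t)\lesssim(1-x)/((1-x)^2+t^2)$ holds uniformly for $x\in[0,1)$, $|t|\le\pi$. Since $u\ge\delta$ and the arcs have length $\asymp\delta$, step~1 holds with the kernel $K_m(j)=m/(m^2+j^2)$. Its $\ell^1(\Z)$-norm is at most $1+\pi$ for every $m\ge1$, so step~3 runs verbatim, uniformly in $\rho$.
\item As the paper does: apply the case $\rho\ge\frac12$ to $f_\rho$ and $r/\rho$. This produces $\tilde N=E(\rho/\delta)\ge\rho N/2$ arcs and a factor $\rho^{\frac1p-\frac1q}$. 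Then pass from $\tilde N$ to $N$ arcs at cost $\lesssim\rho^{-(\frac1p-\frac1q)}$.
\end{itemize}

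Also, the first display of your step~1 (a window $|\phi-\theta|\lesssim\delta$) is false as stated. The sub-mean-value property on a disc gives an area average, not an integral over $|w|=\rho$, and no compactly supported window can work. Only the Poisson bound you actually use is valid.

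\textbf{Comparison with the paper.} Once repaired, your proof of \eqref{eq: th improve} takes a genuinely different route from the paper's when $q<\infty$.
\begin{itemize}
\item The paper first bounds $M_q(r,f)$ by the amalgam norm of the radial maximal function $R(f)$ on $|w|=\rho$ (Proposition~\ref{prop: radial bound integral}). It uses area means over discs of radius $\asymp\delta$, which is exactly how it passes from the disc to the circle. It then removes $R(f)$ via $R(f)^p\le4M_\T(|f|^{p/2})^2$ and the boundedness of the Hardy--Littlewood maximal operator on the Wiener amalgam $\ell^{2q/p}(L^2)$ (Theorem~\ref{th: maximal acotada amalgama}, Lemma~\ref{lemma: radial maximal bounded}).
\item The paper uses the Poisson-kernel bound only for $q=\infty$. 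You observe that this bound is a discrete convolution with a uniformly $\ell^1$ kernel, so Young's inequality on $\Z_N$ with exponent $q/p>1$ gives all finite $q$ at once.
\end{itemize}
Your route is shorter and entirely elementary, avoiding the amalgam maximal theorem. The paper's route yields, as by-products, the stronger radial-maximal estimates of Proposition~\ref{prop: radial bound integral} and Lemma~\ref{lemma: radial maximal bounded}.

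Your proof of \eqref{eq:trivial} is the paper's. For \eqref{eq:improve1}, $f\equiv1$ gives exactly the rate $N^{\frac1p-\frac1q}$ that the paper extracts from lacunary series (Lemma~\ref{lemma: lacunary}) plus a factorization argument. The constant function already spreads its $L^p$ mass evenly over the $N$ arcs, so that construction is unnecessary for \eqref{eq:improve1} as stated.

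Finally, at $\rho=1$ your limiting argument needs $f\in H^p$, as does the paper's use of Lemma~\ref{lemma: radial maximal bounded}. This hypothesis cannot be dropped. For $f(z)=\exp\bigl(\frac{1+z}{1-z}\bigr)$ the boundary values have modulus $1$ a.e., so the right-hand side of \eqref{eq: th improve} stays bounded while $M_q(r,f)\to\infty$.
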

Here and on the following, we understand that the right hand side of \eqref{eq: th improve} equals infinity in the case $\rho=1$ if $f$ does not have finite boundary values a. e. on $\T$. The mixed norm term appearing in the right-hand side of \eqref{eq: th improve} is reminiscent of the concept of amalgam space, 
which plays an important role in the proof of \eqref{eq: th improve}. In fact, the proof of \eqref{eq: th improve} for $0<q<\infty$ is split into two steps. First, the integral mean $M_q(r,f)$ is bounded above by an amalgam-type expression involving the radial maximal function of $f_\rho$. Second, the boundedness of the classical Hardy-Littlewood maximal operator on amalgam spaces \cite{AguilOrtega,Carton-Lebrun} is used to control the aforementioned expression by the right-hand side of \eqref{eq: th improve}.

As for the proof of \eqref{eq:improve1}, for each $N\in\N$, it is natural to look for a  function $F_N\in H^p$ such that 
$$ \left ((F_N)_{[p]}(1) \right )_{N,l}\asymp \frac{1}{N^{1/p}} \| F_N\|_{H^p}, \quad \text{for each $ l=0,\ldots N-1$.}$$
This approach lead us to the function theory of $H^p$ spaces, and in particular to useful  properties of the boundary values of lacunary series,  which are used  to prove   \eqref{eq:improve1}
     
	In the second part of this work,  we apply  Theorem~\ref{th1: improvement} to  characterize the symbols $g$ such that the analytic paraproducts  are bounded between analytic  mixed norm spaces induced by radial doubling weights. 
More precisely, the extension $\om(z)=\om(|z|)$ of a non-negative function $\om\in L^1([0,1))$, is called a radial weight. For $0<p\leq \infty$ and $0<q<\infty$, the 
 weighted mixed-norm 
 spaces $L^{p,q}_\om$  consists of measurable functions 
 such that
	$$
	\nm{f}_{L^{p,q}_\om}^q =\int_0^1 M_p^q(r,f) \om(r)\,dr.
	$$
We also consider the analytic
 weighted mixed-norm 
 spaces   $A^{p,q}_\om=\H(\D)\cap L^{p,q}_\om$. 
 Throughout this paper, we assume that $\omg(z)=\int_{\abs{z}}^1\om(r)dr>0$ for all $z\in\D$,  otherwise $\Apq{p}{q}=\H(\D)$.
	If  $0<p=q<\infty$, we denote 
$\Apq{p}{p} = A^p_\om$ which is nothing but the 
Bergman space induced by $\om$.
	Analytic mixed norm spaces induced by standard weights first appeared  in the aforementioned  Hardy and Littlewood’s paper  \cite{HardyLittlewood}, although the spaces themselves were not explicitly defined until Flett’s works \cite{F1,F2}. Since then, they have been extensively studied by many authors, for example, such spaces arise naturally in the study of coefficient multipliers on Hardy and weighted spaces \cite{JVA}, as well as in the analysis of  generalized Hilbert operators on weighted Bergman spaces~\cite{GaGiPeSis, PelRathg, PelSeco}. On the other hand, given $g\in \H(\D)$
the
 integral  operators
 $$ T_g(f)(z)=\int_0^z f(\z)g'(\z)\, \, d\z, \quad S_gf(z)=\int_0^z f'(\z)g(\z)d\z,\quad  g\in \H(\D),$$
and the multiplication operator $M_g(f)=gf$ are called  analytic paraproducts due to formula
$$
M_gf(z)= T_gf(z)+S_gf(z) + f(0)g(0).
$$
The boundedness  of analytic paraproducts has been studied on many spaces of analytic functions since the seminal papers  \cite{AC,AS,ASBergman,Duren:Romberg:Shields}, where the authors described  
their action on classical Hardy spaces  and standard Bergman spaces. They are primordial operators within the concrete operator theory on spaces of analytic function whose study is connected 
with many areas of research such as univalent functions \cite{Pom}, the theory of Muchenhoupt and Bekoll\'e-Bonami weights \cite{Aleman:Constantin,AlPe} or
H\"ormander maximal functions  and Carleson measures \cite[Chapters~2 and 4]{PelRat}.        
 In particular, Hu proved in \cite{Hu}  for a certain class of normal weights,  that $T_g:\Apq{p}{q}\to \Apq{p}{q}$ is bounded if and only if $g$ belongs to the classical Bloch space  $\B=\set{f\in\H(\D): \sup_{z\in\D} \abs{f'(z)}(1-\abs{z})<\infty}$.
See also \cite{ZXH} for further results. 
However, as far as we know,  a description of the symbols $g$ such that $L_g:\Apq{p}{q}\to \Apq{s}{t}$ is bounded, where $L_g\in \{ M_g, S_g, T_g\}$ and $0<p,q,s,t<\infty$,  is unknown even for  $\om\equiv 1$.
 Our next results fills this gap in the theory. In order to state them some notation is needed. A radial weight $\om$ belongs to $\DD$ if there exists $C=C(\om)>0$ such that 
	$$
	\omg(r)\le C\omg\left(\frac{1+r}{2}\right),\quad 0\le r<1.
	$$
	We say that a radial weight $\om\in\Dd$ if there exists $K=K(\om)>1$ and $C=C(\om)>1$ such that 
	\begin{equation}\label{eq: def Dcheck}
		\omg(r)\ge C\omg\left(1-\frac{1-r}{K}\right),\quad 0\le r<1.
	\end{equation}
	We denote $\mathcal{D}=\DD\cap\Dd$ for short, and we simply say that $\omega$ is a doubling weight if $\om\in\DDD$. The class $\DDD$ arises  naturally in many topics in operator theory on spaces of analytic functions. In fact, the Bergman projection $P_\om$, induced by a radial weight $\omega$, acts as a bounded and onto operator from $L^\infty$ to $\B$ 
if and only if $\om \in \DDD$  \cite[Theorem~3]{advances}. Moreover,  the Littlewood-Paley formula
\begin{equation}\label{eq: L-P estimate}
		\nm{f}_{\Apq{p}{q}}^q \asymp \int_0^1 M_p^q(r,f')(1-r)^{q}\om(r)dr + \abs{f(0)}^q,\quad f\in\H(\D),
\end{equation}
holds for any  $0<p,q<\infty$ if and only if $\om\in\DDD$. The case $q=p$ was proved  
in \cite[ Theorem 6]{advances} and its proof can be mimicked to prove it for any $0<p\le \infty, 0<q<\infty$.

Prior to state the results on the boundedness of the analytic paraproducts between analytic weighted mixed norm spaces, let us observe that by  \eqref{eq: L-P estimate}  the boundedness of
 $L_g:\Apq{p}{q}\to \Apq{s}{t}$,  $L_g\in \{ M_g, S_g, T_g\}$,   is equivalent to the following inequality 
	\begin{equation}\label{eq: carleson-type inequalityTg}
		\left (\int_0^1 \left (\int_0^{2\pi}\abs{f^{(n)}(re^{i\t})}^s \abs{G(re^{i\t})}^s\frac{d\t}{2\pi} \right)^{\frac{t}{s}}(1-r)^{m t}\om(r)  dr \right)^\frac{1}{t} \leq C \nm{f}_{\Apq{p}{q}},
	\end{equation}
where $n,m\in \{0,1\}$ and $G\in \{g,g'\}$ in each case. This leads us to the study of the Carleson-type inequality
    \begin{equation}\label{eq: carleson-type inequality}
		\left (\int_0^1 \left (\int_0^{2\pi}\abs{f^{(n)}(re^{i\t})}^s d\mu_r(\t) \right)^{\frac{t}{s}}d\nu(r) \right)^\frac{1}{t} \leq C \nm{f}_{\Apq{p}{q}},
	\end{equation}
for	$n\in\N\cup \{0\}$, arbitrary positive Borel measures $\{\mu_r\}_{r\in [0,1)}$ on $[0,2\pi]$ and any positive Borel measure $\nu$ on $[0,1)$. Luecking in \cite[p. 347]{luecking} already considered \eqref{eq: carleson-type inequality} for $\om=1$, and
states (without providing a detailed proof) that in this particular case,  
\eqref{eq: carleson-type inequality}
is equivalent to the following discrete inequality
\begin{equation}\label{eq:Luecking-discrete}
	\left (\sum_{j=1}^\infty \int_{1-2^{1-j}}^{1-2^{-j}}2^{jt\left (n+ \frac{1}{p}+\frac{1}{q}\right)} \left (\sum_{l=0}^{2^{j+1}-1} \abs{a_{j,l}}^{s} \mu_r\left (\left [2\pi\frac{l}{2^{j+1}}2\pi\frac{l+1}{2^{j+1}}\right]\right) \right)^{\frac{t}{s}} d\nu(r)\right )^{\frac{1}{t}} \leq C \nm{a_{j,l}}_{\ell^{p,q}}.
	\end{equation}  
Here and on the following,  for $0<p,q\leq\infty$ the space $\ell^{p,q}$ consists of all double-indexed sequences $\{a_{j,l}\}_{j,l}\subset \C$ such that
	$
	\nm{\{a_{j,l}\}}_{\ell^{p,q}} = \nm{\left \{\nm{\{a_{j,l}\}_{l}}_{\ell^p}\right \}_{j}}_{\ell^q} < \infty.
	$
However, to the best of our knowledge, describing the measures $\{\mu_r\}_{r\in [0,1)}$ and $\nu$ such that \eqref{eq: carleson-type inequality} holds is an open problem (even for $\om=1$).

\par In Section~\ref{sec:Carleson}, en route to characterize the boundedness of the analytic paraproducts  between analytic weighted mixed norm spaces induced by radial doubling weights,  we provide a detailed proof of the aforementioned Luecking's assertion for \eqref{eq: carleson-type inequality} and $\om\in\DDD$, see Proposition~\ref{propo: carleson type} below. Furthermore, we obtain a description of the measures such that \eqref{eq: carleson-type inequality} holds in the particular case   $d\mu_r(\t)=\abs{G(re^{i\t})}^s\frac{d\theta}{2\pi}$,  $G\in\H(\D)$, and  $\nu\in\DDD$, see Proposition~\ref{propo: caracterización G nu} below.
From now on, we write
	$$
	p'=\begin{cases}
		\frac{p}{p-1},\quad &\text{if}\quad p>1 \\
		\infty,\quad &\text{if}\quad 0<p\leq 1.
	\end{cases}$$

Our next result describes the symbols $g\in \H(\D)$ for which $T_g$ acts as a bounded operator between two analytic weighted mixed norm spaces.  
    \begin{theorem}\label{th2: charact Tg}
		Let $0<p,q,s,t<\infty$, $g\in\H(\D)$, $\om \in \DDD$, $K=K(\om)\in \N \setminus \{1\}$ such that \eqref{eq: def Dcheck} holds and denote $\frac{1}{\tilde{p}}=\frac{1}{s}-\frac{1}{p}$ and $\frac{1}{\tilde{q}}=\frac{1}{t}-\frac{1}{q}$. Then, the following conditions are equivalent:
		\begin{itemize}
			\item[(i)] $T_g:\Apq{p}{q}\to \Apq{s}{t}$ is bounded;
			\item[(ii)] For $j\in\N\cup\{0\}$, let $r_{j}=1-K^{-j}$. Then,
			$$\left \{K^{j(\frac{1}{p}-1)}\omg(r_j)^{\frac{1}{\tilde{q}}}  \left (g'_{[s]}(r_{j-1}) \right)_{K^{j+2},l} \right \}\in \ell^{s\left (\frac{p}{s}\right )',t\left (\frac{q}{t}\right )'}.$$
			\item[(iii)] The following holds:
			\begin{itemize}
				\item[(a)] If $p\leq s$ and $q\leq t$, 
				$$
				\sup_{z\in\D} \abs{g'(z)}\omg(z)^{\frac{1}{\tilde{q}}}(1-|z|)^{1+\frac{1}{\tilde{p}}}<\infty.
				$$
				\item[(b)] If $s<p$ and $q\leq t$,
				$$
				\sup_{0\le r<1} (1-r)\omg(r)^{\frac{1}{\tilde{q}}}M_{\tilde{p}}(r,g')<\infty.
				$$
				\item[(c)] If $p\leq s$ and $t<q$,
				$$
				|g'(z)|(1-|z|)^{1+\frac{1}{\tilde{p}}}\in L_\om^{\infty, \tilde{q}}.
				$$
				\item[(d)] If $s<p$ and $t<q$,
				$$
				g\in\Apq{\tilde{p}}{\tilde{q}}.
				$$
			\end{itemize}  
		\end{itemize}
		Moreover,
		$$
		\nm{T_g}_{\Apq{p}{q}\to\Apq{s}{t}} \asymp \nm{\left \{K^{j(\frac{1}{p}-1)}\omg(r_j)^{\frac{1}{\tilde{q}}}  \left (g'_{[s]}(r_{j-1}) \right)_{K^{j+2},l} \right \}}_{\ell^{s\left (\frac{p}{s}\right )',t\left (\frac{q}{t}\right )'}}\asymp \rho_{p,q,s,t,\om}(g),
		$$
		where,
		$$
		\rho_{p,q,s,t,\om}(g)= \begin{cases}
			\sup_{z\in\D} \abs{g'(z)}\omg(z)^{\frac{1}{\tilde{q}}}(1-\abs{z})^{1+\frac{1}{\tilde{p}}}, & \text{if}\; p\leq s, q\leq t \\ \sup_{0\le r<1} (1-r)\omg(r)^{\frac{1}{\tilde{q}}}M_{\tilde{p}}(r,g'),
			& \text{if}\;s<p, q\leq t \\ \nm{ (1-|\cdot|)^{1+\frac{1}{\tilde{p}}}\;g'}_{L_\om^{\infty, \tilde{q}}},
			& \text{if}\;p\leq s, t<q \\ \nm{g-g(0)}_{\Apq{\tilde{p}}{\tilde{q}}}
			& \text{if}\;s<p,t<q .
		\end{cases} 
		$$
	\end{theorem}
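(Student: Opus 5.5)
By the Littlewood--Paley formula \eqref{eq: L-P estimate}, and since $(T_gf)'=fg'$, boundedness of $T_g:\Apq{p}{q}\to\Apq{s}{t}$ is equivalent to the Carleson-type inequality \eqref{eq: carleson-type inequalityTg} with $n=0$, $m=1$, $G=g'$. I will therefore prove the equivalence (i)$\Leftrightarrow$(ii) by discretizing this inequality in the manner of Luecking's assertion \eqref{eq:Luecking-discrete}, now adapted to $\om\in\DDD$.

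\textbf{Discretization.} Split $[0,1)$ into the annuli $r_{j-1}\le r<r_j$ with $r_j=1-K^{-j}$. Because $\om\in\DDD$, we have $\int_{r_{j-1}}^{r_j}\om\asymp\omg(r_j)$, and $1-r\asymp K^{-j}$ on the $j$-th annulus. On that annulus, split the circle into the $K^{j+2}$ arcs $I_{K^{j+2},l}$.

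\textbf{Sufficiency, (ii)$\Rightarrow$(i).} On each arc, $|f|$ is dominated (via subharmonicity, or via Theorem~\ref{th1: improvement} applied with $\rho$ slightly larger than $r$) by a local quantity $a_{j,l}$. This quantity satisfies
\[
\|\{K^{-j/p}\,a_{j,l}\}\|_{\ell^{p,q}\text{ weighted by }\omg(r_j)^{1/q}}\lesssim \|f\|_{\Apq{p}{q}}.
\]
This is exactly where the improvement \eqref{eq: th improve} enters. It lets us replace $M_\infty$, or $M_q$, by an $\ell^q$ (here $\ell^p$-type) sum of local $L^p$ norms over a partition whose mesh is comparable to $1-r$, which is precisely the amalgam structure matching the arcs $I_{K^{j+2},l}$. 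Plugging this bound into the left-hand side reduces (i) to the discrete inequality
\[
\Big(\sum_j \omg(r_j)K^{-jt}\Big(\sum_l |a_{j,l}|^s \big(g'_{[s]}(r_{j-1})\big)^s_{K^{j+2},l}\Big)^{t/s}\Big)^{1/t}\lesssim \|a\|_{\ell^{p,q}}.
\]
The sup of $\ell^s$-pairings against $\ell^p$ is computed by H\"older duality twice: inner exponent $\frac ps$ with conjugate $(\frac ps)'$, outer exponent $\frac qt$ with conjugate $(\frac qt)'$. This yields precisely the $\ell^{s(p/s)',\,t(q/t)'}$ norm in (ii), with the weights $K^{j(1/p-1)}\omg(r_j)^{1/\tilde q}$ arising from the normalization $\omg(r_j)^{1/t-1/q}$ and the factor $K^{j/p}$.

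\textbf{Necessity, (i)$\Rightarrow$(ii).} Test against functions that realize the duality. Use a sum $f=\sum_{j,l}c_{j,l}F_{j,l}$ of kernel-type functions such as $(1-\overline{\zeta_{j,l}}z)^{-M}$ with $M$ large, and apply Khinchine's inequality with random signs, as in Luecking's method. This requires an atomic-type upper bound $\|\sum c_{j,l}F_{j,l}\|_{\Apq pq}\lesssim\|c\|_{\ell^{p,q}}$ (suitably normalized), which holds for $\om\in\DDD$ by standard estimates. On the other side, Khinchine gives a lower bound of the left side by $\big(\sum|c_{j,l}|^2|F_{j,l}|^2\big)^{1/2}$, which dominates the diagonal terms on each arc. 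This yields the discrete inequality, and by duality we obtain (ii).

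\textbf{Equivalence (ii)$\Leftrightarrow$(iii).} This is a case analysis on the exponents.

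\emph{(a)} When $p\le s$ and $q\le t$, both conjugate exponents equal $\infty$. Then (ii) is a $\sup$ over $(j,l)$ of local $L^s$ means of $g'$, and subharmonicity converts this into the pointwise condition.

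\emph{(b)} When $s<p$ and $q\le t$, the inner norm is $\ell^{s(p/s)'}=\ell^{\tilde p}$ of local $L^s$ means. Passing between radii $r_{j-1}$ and $r_{j+1}$ via subharmonicity, or via Theorem~\ref{th1: improvement} with $q=\tilde p$ and $p=s$ in one direction and H\"older in the other, this is comparable to $K^{-j/\tilde p}M_{\tilde p}(r,g')$. Taking the outer $\sup$ gives the stated condition.

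\emph{(c) and (d)} Here the outer index is $t(q/t)'=\tilde q$, and summing over $j$ with weight $\omg(r_j)$ gives the $L^{\cdot,\tilde q}_\om$ integrals. The monotonicity of $M_{\tilde p}$ in $r$ and $\om\in\DDD$ allow replacing sums by integrals. Finally, $\|g'(1-|\cdot|)\|_{A^{\tilde p,\tilde q}_\om}\asymp\|g-g(0)\|_{A^{\tilde p,\tilde q}_\om}$ by \eqref{eq: L-P estimate}.

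\textbf{Main obstacle.} The hardest step is the sufficiency direction with $s<p$: it needs a local control of $|f|$ by an $\ell^p$ sum of arc-wise $L^p$ norms at a slightly larger radius, uniformly in $j$. Plain \eqref{HL} loses exactly this localization, so I expect to rely crucially on \eqref{eq: th improve}. I would first try the radial maximal function over Stolz-type boxes bounded on amalgams, following the strategy described for Theorem~\ref{th1: improvement}.
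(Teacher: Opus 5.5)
Your plan is correct and is essentially the paper's. It proves (i)$\Leftrightarrow$(ii) by a Luecking-type discretization: atomic sums with Rademacher signs plus Khinchine--Kahane for necessity, and the block estimate of Lemma~\ref{lemma: descomposicion norma bloques} plus H\"older/multiplier duality for sufficiency. It then uses Theorem~\ref{th1: improvement} for (ii)$\Rightarrow$(iii). The one structural difference is how you close the cycle. You prove (iii)$\Rightarrow$(ii) by H\"older on each arc, $(g'_{[s]}(r_{j-1}))_{K^{j+2},l}\le K^{-(j+2)/p}\big(\int_{I_{K^{j+2},l}}|g'(r_{j-1}e^{i\theta})|^{\tilde p}\frac{d\theta}{2\pi}\big)^{1/\tilde p}$, or $\le K^{-(j+2)/s}M_\infty(r_{j-1},g')$ when $p\le s$. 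The paper instead proves (iii)$\Rightarrow$(i) directly in Proposition~\ref{propo: suff bound omega en D} with H\"older and the classical \eqref{HL}. Both work; yours is shorter, but it routes sufficiency through the discrete machinery.

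Your account of where the improvement is needed is, however, backwards.

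Sufficiency never uses it. In Lemma~\ref{lemma: descomposicion norma bloques} the inner exponent equals the integrability exponent $p$, so the area sub-mean-value inequality summed over $l$ collapses to $M_p(r_{j+1},f)$. Theorem~\ref{th1: improvement} concerns $p<q$ anyway. For $s<p$ the continuous sufficiency is just $M_s(r,fg')\le M_p(r,f)M_{\tilde p}(r,g')$.

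The improvement is indispensable in (ii)$\Rightarrow$(iii). There it gives $M_{\tilde p}(r_j,g')\lesssim K^{j/p}\|\{(g'_{[s]}(r_{j+1}))_{K^{j+4},l}\}_l\|_{\ell^{\tilde p}}$ and its $\ell^\infty$ analogue, whereas \eqref{HL} would only give the larger $\ell^s$-sum $M_s(r_{j+1},g')$. You do invoke it there in (b), so the proof closes, but a few details need fixing:
\begin{itemize}
\item In (b) the factor is $K^{-j/p}$, not $K^{-j/\tilde p}$, since $\frac1s-\frac1{\tilde p}=\frac1p$.
\item In (a), ``subharmonicity'' has to mean the Poisson majorant of $|g'|^s$ on a single circle, which is the $q=\infty$ case of Theorem~\ref{th1: improvement}. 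Arc integrals are not monotone in the radius, so area means never reach the fixed radius in (ii).
\item In the discretization you need the monotonicity in $r$ of $M_s(r,fg')$, and of its Rademacher average, to pass from $r\in[r_{j-1},r_j)$ to those fixed radii.
\end{itemize}
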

	Notice that, if $p=s$ then $\frac{1}{\tilde{p}}$ simply denotes the value $0$, and if $s<p$ , then $\tilde{p}=s\left (\frac{p}{s}\right )'>0$, so the quantities and exponents in the conditions of Theorem~\ref{th2: charact Tg} are well-defined.

\par The equivalence between (i) and (ii) in Theorem~\ref{th2: charact Tg} is obtained in Section~\ref{sec:Carleson}  via the solution to the Carleson-type problem \eqref{eq: carleson-type inequality},  using results related to an atomic decomposition of $\Apq{p}{q}$ and a discretization of the $\Apq{p}{q}$ norm previously obtained in \cite{PelRatSierra}.
In Section~\ref{sec:continuous}, Theorem~\ref{th1: improvement} is strongly used to prove the implication (ii)$\Rightarrow$(iii) of Theorem~\ref{th2: charact Tg}.
Finally, the implication (iii)$\Rightarrow$(i) of Theorem~\ref{th2: charact Tg} is obtained using standard ideas.

It is important to highlight that  the equivalence (i)$\Leftrightarrow$(iii) of Theorem~\ref{th2: charact Tg} allows to prove that under certain  assumptions on the paremeters  $p,q,s,t$ the boundedness of $T_g$ is only possible for constant functions. We summarize these cases in the following corollary.
	\begin{corollary}\label{coro: Tg constant}
		Let $0<p,q,s,t<\infty$, $g\in\H(\D)$,  $\om \in \DDD$ and denote $\frac{1}{\tilde{p}}=\frac{1}{s}-\frac{1}{p}$ and $\frac{1}{\tilde{q}}=\frac{1}{t}-\frac{1}{q}$. If any of the following conditions holds, then $T_g:\Apq{p}{q}\to \Apq{s}{t}$ is bounded if and only if $g$ is a constant function:
		\begin{itemize}
			\item[(a)] If $p\leq s$, $q\leq t$ and $\lim_{r\to 1^-}\omg(r)^{\frac{1}{\tilde{q}}}(1-r)^{1+\frac{1}{\tilde{p}}}=\infty$.
			\item[(b)] If $s<p$, $q\leq t$ and $\lim_{r\to 1^-}\omg(r)^{\frac{1}{\tilde{q}}}(1-r)=\infty$.
			\item[(c)] If $p\leq s$, $t<q$ and $\int_0^1 (1-r)^{ (1+\frac{1}{\tilde{p}})\tilde{q}}\om(r)\,dr=\infty$. 
		\end{itemize}
	\end{corollary}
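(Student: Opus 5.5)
The plan is to use the equivalence (i)$\Leftrightarrow$(iii) of Theorem~\ref{th2: charact Tg}. Constant symbols give $T_g\equiv 0$, which is trivially bounded, so only the converse needs proof. Assume $T_g$ is bounded. Then the quantity $\rho_{p,q,s,t,\om}(g)$ in the relevant case is finite, and we want to force $g'\equiv 0$. Throughout we use that $\omg(z)=\omg(|z|)$ is positive and nonincreasing in $|z|$.

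\emph{Case (a).} Suppose $C:=\sup_{z}\abs{g'(z)}\omg(z)^{\frac{1}{\tilde q}}(1-|z|)^{1+\frac{1}{\tilde p}}<\infty$. Then
$$M_\infty(r,g')\le C\,\big(\omg(r)^{\frac{1}{\tilde q}}(1-r)^{1+\frac{1}{\tilde p}}\big)^{-1}\to 0 \quad\text{as } r\to1^-.$$
Since $M_\infty(r,g')$ is nondecreasing in $r$, this gives $M_\infty(r,g')=0$ for every $r$. Hence $g'\equiv0$ and $g$ is constant.

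\emph{Case (b).} The argument is identical, with the pointwise bound replaced by $M_{\tilde p}(r,g')\le C\big(\omg(r)^{\frac{1}{\tilde q}}(1-r)\big)^{-1}\to0$. Here $M_{\tilde p}(r,g')$ is also nondecreasing in $r$ for the analytic function $g'$, since $\abs{g'}^{\tilde p}$ is subharmonic. So $M_{\tilde p}(r,g')\equiv 0$ and $g'\equiv 0$.

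\emph{Case (c).} This is the only case needing a small extra step. The condition says
$$\int_0^1 M_\infty(r,g')^{\tilde q}(1-r)^{(1+\frac{1}{\tilde p})\tilde q}\om(r)\,dr<\infty.$$
Suppose $g'\not\equiv 0$. Then there is $r_0\in(0,1)$ with $M_\infty(r_0,g')=c>0$. By monotonicity, $M_\infty(r,g')\ge c$ for all $r\ge r_0$, so the integral is at least
$$c^{\tilde q}\int_{r_0}^1(1-r)^{(1+\frac1{\tilde p})\tilde q}\om(r)\,dr.$$
This tail is infinite. Indeed, the full integral over $[0,1)$ diverges by hypothesis, while the integral over $[0,r_0]$ is finite because $\om\in L^1$ and the factor $(1-r)^{(1+\frac1{\tilde p})\tilde q}$ is bounded there. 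Note that the exponent is positive, since $\tilde q>0$ when $t<q$ and $1+\frac1{\tilde p}>0$ when $p\le s$, as $\frac1{\tilde p}\ge -\frac1p>-1$ is not even needed: $\frac{1}{\tilde p}\le 0$ but $\abs{\frac1{\tilde p}}<\frac1p$, and in any case the factor is bounded on $[0,r_0]$. This contradicts finiteness, so $g'\equiv 0$.

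There is no real obstacle, since all the work is in Theorem~\ref{th2: charact Tg}. The only points to check are the monotonicity of integral means, which handles (a)–(c) uniformly, and that the tail divergence in (c) follows from divergence of the full integral.
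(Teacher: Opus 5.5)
Your proof is correct and follows the route the paper indicates: apply (i)$\Leftrightarrow$(iii) of Theorem~\ref{th2: charact Tg}, then use that $M_\infty(r,g')$ and $M_{\tilde p}(r,g')$ are nondecreasing in $r$ to force $g'\equiv 0$. One cleanup is needed in case (c): the exponent $(1+\frac{1}{\tilde p})\tilde q$ need not be positive (e.g.\ $p=\frac13$, $s=1$ gives $1+\frac{1}{\tilde p}=-1$), so drop that remark; the argument only uses that $(1-r)^{(1+\frac{1}{\tilde p})\tilde q}$ is bounded on $[0,r_0]$, which you also note.
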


The proof of Theorem~\ref{th2: charact Tg} can be mimicked, with  appropriate modifications, to obtain a characterization of the boundedness of $S_g$ and $M_g$ between two analytic weighted mixed norm spaces.  We only write here the continuous characterization to lighten the notation, the discrete characterizations are provided in Corollary~\ref{coro: discrete caract Sg Mg} below.

\begin{theorem}\label{th2: charact SgMg}
		Let $0<p,q,s,t<\infty$, $g\in\H(\D)$,  $\om \in \DDD$ and denote $\frac{1}{\tilde{p}}=\frac{1}{s}-\frac{1}{p}$ and $\frac{1}{\tilde{q}}=\frac{1}{t}-\frac{1}{q}$.
Then, the following conditions are equivalent:
		\begin{itemize}
			\item[(i)] $S_g:\Apq{p}{q}\to \Apq{s}{t}$ is bounded;
            \item[(ii)] $M_g:\Apq{p}{q}\to \Apq{s}{t}$ is bounded;
			\item[(iii)] The following holds:
			\begin{itemize}
				\item[(a)] If $p\leq s$ and $q\leq t$, 
				$$
				\sup_{z\in\D} \abs{g(z)}\omg(z)^{\frac{1}{\tilde{q}}}(1-|z|)^{\frac{1}{\tilde{p}}}<\infty.
				$$
                Moreover, if $p=s$ and $q=t$, the condition holds if and only if $g\in H^\infty$. Otherwise the condition holds if and only if $g\equiv 0$.
				\item[(b)] If $s<p$ and $q\leq t$,
				$$
				\sup_{0\le r<1} \omg(r)^{\frac{1}{\tilde{q}}}M_{\tilde{p}}(r,g)<\infty.
                $$
               Moreover, if $q=t$, the condition holds if and only if $g\in H^{\tilde{p}}$. Otherwise the condition holds if and only if $g\equiv 0$.
			      \item[(c)] If $p\leq s$ and $t<q$,
				$$
				|g(z)|(1-|z|)^{\frac{1}{\tilde{p}}}\in L_\om^{\infty, \tilde{q}}.
				$$
                Moreover, if $\int_0^1 (1-r)^{ \frac{\tilde{q}}{\tilde{p}}}\om(r)\,dr=\infty$ the condition holds if and only if $g\equiv 0$.
				\item[(d)] If $s<p$ and $t<q$,
				$$
				g\in\Apq{\tilde{p}}{\tilde{q}}.
				$$
			\end{itemize}  
		\end{itemize}
		Moreover,
		$$
		\nm{S_g}_{\Apq{p}{q}\to\Apq{s}{t}} \asymp \nm{M_g}_{\Apq{p}{q}\to\Apq{s}{t}} \asymp  \begin{cases}
			\sup_{z\in\D} \abs{g(z)}\omg(z)^{\frac{1}{\tilde{q}}}(1-\abs{z})^{\frac{1}{\tilde{p}}}, & \text{if}\; p\leq s, q\leq t \\ \sup_{0\le r<1} \omg(r)^{\frac{1}{\tilde{q}}}M_{\tilde{p}}(r,g),
			& \text{if}\;s<p, q\leq t \\ \nm{ (1-|\cdot|)^{\frac{1}{\tilde{p}}}\;g}_{L_\om^{\infty, \tilde{q}}},
			& \text{if}\;p\leq s, t<q \\ \nm{g}_{\Apq{\tilde{p}}{\tilde{q}}}
			& \text{if}\;s<p,t<q .
		\end{cases} 
		$$
	\end{theorem}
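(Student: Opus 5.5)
The plan is to run the argument of Theorem~\ref{th2: charact Tg} with $g$ in place of $g'$, and to connect $S_g$ and $M_g$ through $M_gf=T_gf+S_gf+f(0)g(0)$.

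\textbf{Step 1: $S_g$.} By \eqref{eq: L-P estimate}, boundedness of $S_g:\Apq{p}{q}\to\Apq{s}{t}$ is equivalent to \eqref{eq: carleson-type inequalityTg} with $n=1$, $m=1$, $G=g$. The factor $(1-r)$ together with $f'$ behaves like $f$, since \eqref{eq: L-P estimate} holds for every $0<p,q<\infty$. The discrete characterization (Proposition~\ref{propo: carleson type}) therefore gives a condition of the form
\[
\left\{K^{j/p}\omg(r_j)^{1/\tilde q}\left(g_{[s]}(r_{j-1})\right)_{K^{j+2},l}\right\}\in \ell^{s(p/s)',t(q/t)'} .
\]
This is (ii) of Theorem~\ref{th2: charact Tg} with the extra factor $K^{-j}$ removed.

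\textbf{Step 2: discrete to continuous.} The discrete condition is converted into (iii)(a)--(d) exactly as in (ii)$\Rightarrow$(iii) of Theorem~\ref{th2: charact Tg}, using Theorem~\ref{th1: improvement}. The converse, (iii)$\Rightarrow$ boundedness, is proved directly. For $M_g$ apply Hölder's inequality on circles with exponents $p/s$ and $\tilde p/s$, and then on radii with exponents $q/t$ and $\tilde q/t$, in case (d). In cases (a)--(c) use pointwise or $M_\infty$ bounds together with the growth estimate $M_p(r,f)\lesssim \|f\|_{\Apq{p}{q}}\,\omg(r)^{-1/q}$, which holds for $\om\in\DDD$. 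The same estimates applied to $f'(1-r)$ give the result for $S_g$.

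\textbf{Step 3: $M_g$ and the equivalence (i)$\Leftrightarrow$(ii).} The direction (iii)$\Rightarrow$(ii) is covered by Step 2. For (ii)$\Rightarrow$(iii), test $M_g$ on the atoms or kernel functions already used for necessity in the $T_g$ case. Alternatively, $M_g$ bounded implies $S_g$ bounded: from $M_g$ bounded, $|g(z)|$ is controlled by point evaluation estimates, which yields the $H^\infty$-type condition needed to bound $T_g$ via Theorem~\ref{th2: charact Tg}. The cleaner route is to prove necessity for $M_g$ directly by Proposition~\ref{propo: carleson type} with $n=0$, $m=0$, $G=g$; this gives the same discrete condition as in Step 1.

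\textbf{Step 4: the ``moreover'' statements.} In case (a), if $p<s$ or $q<t$, then $\omg(z)^{1/\tilde q}(1-|z|)^{1/\tilde p}\to0$. Any $g$ satisfying the sup-condition would then tend to $0$ at the boundary, so $g\equiv0$ by the maximum principle. When $p=s$ and $q=t$ the condition is exactly $g\in H^\infty$. Case (b) works the same way: $M_{\tilde p}(r,g)$ is nondecreasing in $r$, so either $\omg^{1/\tilde q}\to0$ forces $g=0$, or $q=t$ gives $H^{\tilde p}$. In case (c), $M_\infty(r,g)\ge |g(0)|$ (and likewise for the first nonzero Taylor term times a power of $r$), so divergence of the integral forces $g\equiv0$.

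I expect the main obstacle to be Step 2 in case (b) and (d)-type mixed situations. There one must pass from the amalgam $\ell^{s(p/s)'}$ norms over arcs of length $K^{-j}$ to the integral means $M_{\tilde p}$ at nearby radii. Theorem~\ref{th1: improvement} is used for this, and subharmonicity handles the reverse comparison.
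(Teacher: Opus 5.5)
Your plan follows the paper's own route. The paper gets the discrete description of Corollary~\ref{coro: discrete caract Sg Mg} from the Section~\ref{sec:Carleson} characterization of the Carleson-type inequality with $d\mu_r=\abs{G(re^{i\theta})}^s\frac{d\theta}{2\pi}$ and $\nu\in\DDD$, taking $n=0$, $G=g$, $\nu=\om$ for $M_g$. It then converts that description into (iii) with Theorem~\ref{th1: improvement} exactly as for $T_g$, and gets sufficiency from H\"older's inequality and \eqref{HL} as in Proposition~\ref{propo: suff bound omega en D}. For $S_g$ there is only a cosmetic difference. The paper rewrites boundedness as $M_g:A^{p,q}_{\om_{[q]}}\to A^{s,t}_{\om_{[t]}}$, whereas you apply the same proposition with $n=1$ and $\nu=\om_{[t]}$. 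Since $\widehat{\om_{[t]}}(r_j)\asymp K^{-jt}\omg(r_j)$, both give the same sequence condition. To obtain the explicit $\ell^{s(p/s)',t(q/t)'}$ condition you should cite that proposition together with Lemma~\ref{lemma: multiplicadores luecking}, not Proposition~\ref{propo: carleson type} alone.

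\textbf{Corrections.}

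\emph{Step 4, limits reversed.} In case (a), $p\le s$ and $q\le t$ give $\frac1{\tilde p}\le 0$ and $\frac1{\tilde q}\le 0$. So when $p<s$ or $q<t$, the weight $\omg(z)^{1/\tilde q}(1-|z|)^{1/\tilde p}$ tends to $\infty$, not $0$. That blow-up is what forces $|g(z)|\to 0$ at the boundary, and hence $g\equiv 0$. With the weight tending to $0$, as you wrote it, nothing would be forced. The same applies in (b) with $q<t$: there $\omg(r)^{1/\tilde q}\to\infty$, so $M_{\tilde p}(r,g)\to 0$, and monotonicity of $M_{\tilde p}(r,g)$ in $r$ gives $g\equiv 0$.

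\emph{Step 3, drop the ``alternative'' route.} Boundedness of $M_g$ does not give $g\in H^\infty$ in general. In case (d), for instance, every $g\in\Apq{\tilde p}{\tilde q}$ gives a bounded $M_g$. So point-evaluation bounds cannot be fed into Theorem~\ref{th2: charact Tg} that way. Your direct route through the discrete condition is the correct one.

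\emph{Step 2, sufficiency in (a) and (c) when $p<s$.} Here you need \eqref{HL} to replace $M_s(r,f)$ by $(1-r)^{-(1/p-1/s)}M_p(\frac{1+r}{2},f)$, and only then use $\om\in\DDD$. The growth estimate $M_p(r,f)\le\nm{f}_{\Apq{p}{q}}\omg(r)^{-1/q}$ does not accomplish this on its own.
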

Bearing in mind that for $\beta>0$ and $\om\in\DDD$, $\om_{[\beta]}(r)=\om(r)(1-r)^\beta \in \DDD$ \cite{PelRat,PelRosa} and \eqref{eq: L-P estimate}, it follows that $M_g$, $S_g$, and $T_g$ are simultaneously bounded from $\Apq{p}{q}$ to $\Apq{s}{t}$ whenever $t<q$. However, if $t\ge q$, a cancellation phenomenon may happen, so that $T_g$ is bounded while $M_g$ and $S_g$ are not.

The rest of the paper is organized as follows. In Section~\ref{sec:2} it is provided a proof of Theorem~\ref{th1: improvement}. In Section~\ref{sec:Carleson} we introduce the basics on radial doubling weights.
    Finally, we introduce the following notation, we will use $a\lesssim b$ if there exists a constant
	$C=C(\cdot)>0$ such that $a\leq C b$, and $a \gtrsim b$ is understood in an analogous manner. In particular, if $a\lesssim b$ and $a \gtrsim b$, then we write $a\asymp b$ and say that $a$ and $b$ are comparable.
	\section{Improvement of the Hardy-Litllewood's inequality  \eqref{HL}}\label{sec:2}
Some notation is needed before proving the main results of this section.
	The radial maximal function of $f\in\H(\D)$ is 
	$$
	R(f)(z) =  \sup_{0\leq \rho< |z|} \abs{f(\rho  e^{i\t})}, \quad z=|z|e^{i\t}\in \overline{\D}.  
	$$
Aiming to prove Theorem~\ref{th1: improvement} we get the following result of its own interest. 
	\begin{proposition}\label{prop: radial bound integral}
		Let $0<p<q<\infty$. Then,  there exists a constant $C=C(p,q)>0$ such that 
		\begin{equation}\label{eq:radmax}
		M_q(r,f)\leq \frac{C}{(\rho-r)^{\frac{1}{p}-\frac{1}{q}}} \nm{ \left\{\left (R(f)_{[p]}(\rho) \right )_{N(r,\rho),l} \right\}_{l=0}^{N(r,\rho)-1}}_{\ell^q},\,\, 0\leq r <\rho \leq 1,\, f\in\H(\D),
		\end{equation}
where $N(r,\rho)=E\left ( \frac{1}{\rho-r}\right )$.
	\end{proposition}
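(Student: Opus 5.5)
We prove Proposition~\ref{prop: radial bound integral} by a subharmonicity estimate on pointwise values, followed by a discrete summation over arcs of length comparable to $\rho-r$. Write $\delta=\rho-r$ and $N=N(r,\rho)=E(1/\delta)$, so that $N\asymp 1/\delta$ when $\delta\le 1$ (always the case here). Fix $\t\in I_{N,l}$. The point $re^{i\t}$ lies at distance $\asymp\delta$ from the circle of radius $\rho$. Consider the "Stolz-type" box
$$
Q=\{se^{i\phi}:\ r-\delta/2\le s\le \rho,\ |\phi-\t|\le c\delta\},
$$
with $c$ small enough that $Q$ contains the disc $D(re^{i\t},\delta/4)$. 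By subharmonicity of $|f|^p$,
$$
|f(re^{i\t})|^p\lesssim \frac{1}{\delta^2}\int_{D(re^{i\t},\delta/4)}|f|^p\,dA\lesssim \frac{1}{\delta^2}\int_{|\phi-\t|\le c\delta}\int_{r-\delta/2}^{\rho}|f(se^{i\phi})|^p\,ds\,d\phi .
$$
For $s\le\rho$ we have $|f(se^{i\phi})|\le R(f)(\rho e^{i\phi})$. The $s$-integral therefore contributes a factor $\lesssim\delta$, which gives
$$
|f(re^{i\t})|^p\lesssim \frac1\delta\int_{|\phi-\t|\le c\delta}R(f)(\rho e^{i\phi})^p\,d\phi .
$$
Since $c\delta\lesssim 1/N$, the arc $\{|\phi-\t|\le c\delta\}$ is contained in $I_{N,l-1}\cup I_{N,l}\cup I_{N,l+1}$, with indices taken mod $N$. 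Hence, uniformly in $\t\in I_{N,l}$,
$$
|f(re^{i\t})|\lesssim \delta^{-1/p}\Big(\sum_{k=l-1}^{l+1}\big(R(f)_{[p]}(\rho)\big)_{N,k}^p\Big)^{1/p}\lesssim \delta^{-1/p}\sum_{k=l-1}^{l+1}\big(R(f)_{[p]}(\rho)\big)_{N,k}.
$$

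We now integrate the $q$-th power over each $I_{N,l}$, which has normalized length $1/N\asymp\delta$, and sum over $l$:
$$
M_q^q(r,f)=\sum_{l=0}^{N-1}\int_{I_{N,l}}|f(re^{i\t})|^q\frac{d\t}{2\pi}\lesssim \delta\,\delta^{-q/p}\sum_{l}\sum_{k=l-1}^{l+1}\big(R(f)_{[p]}(\rho)\big)_{N,k}^q\lesssim \delta^{1-q/p}\nm{\{(R(f)_{[p]}(\rho))_{N,k}\}_k}_{\ell^q}^q .
$$
Taking $q$-th roots gives exactly the factor $\delta^{-(1/p-1/q)}$ claimed in \eqref{eq:radmax}.

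The case $\rho=1$ is handled by the same argument, with $R(f)$ on $\T$ understood as the radial limit supremum. The bound $|f(se^{i\phi})|\le R(f)(e^{i\phi})$ still holds for $s<1$, so this case needs no separate treatment.

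The main obstacle is the bookkeeping of the geometry, and it arises in two places. First, when $\delta$ is close to $1$ (for instance $r=0$), the annulus $[r-\delta/2,\rho]$ must be truncated at $0$. In that regime $N$ is bounded, and the mean value property at the nearest interior point is used instead. Second, the constant $c$ must be chosen so that the disc lies in the box, while the box's angular width stays below $2\pi/N$. This is possible because $1/N\ge\delta/2$ whenever $\delta\le1$. Constants depend only on $p$ and $q$.
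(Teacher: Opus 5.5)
Your scheme (subharmonicity on a disc of radius comparable to $\delta=\rho-r$, domination of $|f|$ on $\{|z|\le\rho\}$ by $R(f)(\rho e^{i\varphi})$, localization to $I_{N,l-1}\cup I_{N,l}\cup I_{N,l+1}$, then the $\ell^q$ summation) is the paper's. However, the geometric step it rests on fails whenever $r$ is small, not only when $\delta$ is close to $1$.

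For $r>\delta/4$ the disc $D(re^{i\theta},\delta/4)$ subtends the angular half-width $\arcsin(\delta/(4r))\approx\delta/(4r)$, while each $I_{N,l}$ has length $2\pi/N\le 4\pi\delta$. So $D(re^{i\theta},\delta/4)\subset Q$ forces $c\gtrsim 1/r$, whereas keeping the sector inside three arcs forces $c\lesssim 1$. Both hold only if $r$ is bounded below by an absolute constant.

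Take $r=\delta\to 0$ and $\rho=2\delta$. Then $N\asymp 1/\delta$ and the disc meets $\asymp 1/r$ arcs. You also replaced $dA=s\,ds\,d\varphi$ by $ds\,d\varphi$, so in this regime your estimate only bounds $|f(re^{i\theta})|$ by $\asymp 1/r$ neighbouring arcs, and after summation the constant grows like a negative power of $r$. Your justification ``$1/N\ge\delta/2$'' compares $c\delta$ with $1/N$ but ignores the factor $1/r$ in the angular size of the disc. The fallback ``mean value property at the nearest interior point'' does not cover this regime either, since there $N$ is unbounded.

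This is exactly what the paper's case split is for.
\begin{itemize}
\item For $\rho\ge\frac12$ it works at $\tilde r=\frac{r+\rho}{2}\ge\frac14$ rather than at $r$, via $M_q(r,f)\le M_q(\tilde r,f)$ and a maximum point of $|f(\tilde r e^{i\cdot})|$ on each $I_{N,l}$. A disc of radius $(\rho-r)/K$ then subtends an angle $\asymp\rho-r$.
\item For $\rho<\frac12$ it applies that case to $f_\rho$. This gives the coarser partition into $E(\rho/(\rho-r))$ arcs with a gain $\rho^{1/p-1/q}$. Passing to the $N$-partition (each coarse arc is covered by $\lesssim 1/\rho$ fine ones) costs exactly $\rho^{-(1/p-1/q)}$, which cancels the gain.
\end{itemize}

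Alternatively, you can repair your own argument without rescaling. Use the disc of radius $\varepsilon=r\sin(c\delta)$ with $c\le\frac12$. It is tangent to the rays $\arg z=\theta\pm c\delta$ and lies in $\{r-\varepsilon<|z|<r+\varepsilon\}\subset\{|z|<\rho\}$. Keep the Jacobian: since $\int_{r-\varepsilon}^{r+\varepsilon}s\,ds=2r\varepsilon$ and $\sin x\ge 2x/\pi$,
\[
|f(re^{i\theta})|^p\le\frac{1}{\pi\varepsilon^2}\int_{D(re^{i\theta},\varepsilon)}|f|^p\,dA\le\frac{2r}{\pi\varepsilon}\int_{|\varphi-\theta|\le c\delta}R(f)(\rho e^{i\varphi})^p\,d\varphi\le\frac{1}{c\delta}\int_{|\varphi-\theta|\le c\delta}R(f)(\rho e^{i\varphi})^p\,d\varphi .
\]
This holds uniformly in $0<r<\rho\le 1$. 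The arc $\{|\varphi-\theta|\le c\delta\}$ lies in $I_{N,l-1}\cup I_{N,l}\cup I_{N,l+1}$ because $c\delta<2\pi/N$. The case $r=0$ is trivial since $R(f)(\rho e^{i\varphi})\ge|f(0)|$. Your summation step then goes through verbatim.
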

	\begin{proof}
		Let $0\leq r < \rho \leq 1$, $f\in \H(\D)$ and $N=N(r,\rho)=E\left (\frac{1}{\rho-r} \right)$. Asumme that the right hand side of \eqref{eq:radmax} is finite.  If $\frac{1}{2}\leq \rho \leq 1$, let $\tilde{r}=\frac{\rho+r}{2}$ and $\tilde{\t}_l\in[0,2\pi]$ such that $\max\limits_{\t\in I_{N,l}}\abs{f(\tilde{r}e^{i\t})} = \abs{f(\tilde{r}e^{i\tilde{\t}_l})}$. Then,
		\begin{equation}\label{eq: eq1 mp subharm}
			M_q^q(r,f) \leq M_q^q(\tilde{r},f) = \sum_{l=0}^{N-1}\int_{I_{N,l}}\abs{f(\tilde{r}e^{i\t})}^q \frac{d\theta}{2\pi} \lesssim (\rho-r)\sum_{l=0}^{N-1} \abs{f(\tilde{r}e^{i\tilde{\t}_l})}^q.
		\end{equation}
		On the other hand, there exists an absolute constant $K>2$ such that 
		$$
		D\left (\tilde{r}e^{i\tilde{\t}_l}, \frac{\rho-r}{K} \right) \subset \bigg\{z\in \D: r\leq z \leq \rho, \arg{z}\in I_{N,l-1}\cup I_{N,l}\cup I_{N,l+1} \bigg\},
		$$
		where  $I_{N,-1}=I_{N,N-1}$ and $I_{N,N}=I_{N,0}$. 
Let us denote $J_{N,l}=I_{N,l-1}\cup I_{N,l}\cup I_{N,l+1}$. Then, 
		\begin{equation*}
			\begin{split}
				\abs{f(\tilde{r}e^{i\tilde{\t}_l})}^q &= \left (\abs{f(\tilde{r}e^{i\tilde{\t}_l})}^p\right )^{\frac{q}{p}} \lesssim \left (\frac{1}{(\rho-r)^2}\int_{D\left (\tilde{r}e^{i\tilde{\t}_l},\frac{\rho-r}{K}\right )}\abs{f(z)}^pdA(z) \right )^{\frac{q}{p}} \\
				&\lesssim \left (\frac{1}{(\rho-r)^2}\int_r^\rho \int_{J_{N,l}} \abs{f(xe^{i\t})}^p\frac{d\theta}{2\pi} \,dx \right )^{\frac{q}{p}} \\
				&\lesssim \frac{1}{(\rho-r)^{\frac{q}{p}}}\left ( \int_{J_{N,l}} R(f)(\rho e^{i\t})^p\frac{d\theta}{2\pi}  \right )^{\frac{q}{p}}, \quad l=0,\ldots,N-1.
			\end{split}
		\end{equation*}
		Joining this inequality with \eqref{eq: eq1 mp subharm} it follows that
		\begin{equation}
			\begin{split}\label{casomayorunmedio}
				M_q^q (r,f)&\lesssim \frac{1}{(\rho-r)^{\frac{q}{p}-1}} \sum_{l=0}^{N-1} \left ( \int_{J_{N,l}} R(f)(\rho e^{i\t})^p\frac{d\theta}{2\pi}  \right )^{\frac{q}{p}} \\
				&\lesssim
\frac{1}{(\rho-r)^{\frac{q}{p}-1}} \nm{ \left\{\left (R(f)_{[p]}(\rho) \right )_{N,l} \right\}_{l=0}^{N-1}}_{\ell^q}^q, \quad 0\le r<\rho, \quad \frac{1}{2}\le \rho\le 1.
			\end{split}
		\end{equation}
\par		Assume now that $0<\rho<\frac{1}{2}$. By replacing $\rho$ by $1$, $f$ by $f_\rho$ and $r$ by $\frac{r}{\rho}$ in  \eqref{casomayorunmedio}, it follows that 
		\begin{equation}
			\begin{split}\label{casomenorunmedio}
		M_q(r,f)\lesssim \frac{\rho^{\frac{1}{p}-\frac{1}{q}}}{(\rho-r)^{\frac{1}{p}-\frac{1}{q}}}\nm{ \left\{\left (R(f)_{[p]}(\rho) \right )_{\tilde{N},\tilde{l}} \right\}_{\tilde{l}=0}^{\tilde{N}-1}}_{\ell^q},
		\end{split}
		\end{equation}
		where $\tilde{N}=E\left(\frac{\rho}{\rho-r}\right)$. If $N=\tilde{N}$ the result holds trivially.  On the other hand, if $N>\tilde{N}$ then
		$$
		\tilde{N}=E\left (\frac{\rho}{\rho-r}\right )\geq \frac{1}{2} \frac{\rho}{\rho-r} \geq \frac{\rho}{2} E\left (\frac{1}{\rho-r} \right)=\frac{\rho}{2}N.
		$$
		Therefore,  for every $l=0,\ldots, N-1$ and $\tilde{l}=0,\ldots, \tilde{N}-1$, we have that $|I_{N,l}|<|I_{\tilde{N},\tilde{l}}|\leq \frac{2}{\rho}|I_{N,l}|$. Then, we can cover every interval $I_{\tilde{N},\tilde{l}}$ with at most $E\left (\frac{2}{\rho} \right)+1$ intervals $I_{N,l}$. It means, for each $\tilde{l}$, there exists a $l_{\tilde{l}}$ such that
		\begin{equation}\label{eq: trick scale}
		I_{\tilde{N},\tilde{l}} \subset \bigcup_{j=0}^{E\left (\frac{2}{\rho} \right)} I_{N,l_{\tilde{l}}+j}.
		\end{equation}
So,
		\begin{equation}
			\begin{split}\label{eq:2casomenorunmedio}
				\nm{ \left\{\left (R(f)_{[p]}(\rho) \right )_{\tilde{N},\tilde{l}} \right\}_{\tilde{l}=0}^{\tilde{N}-1}}_{\ell^q}^q &\leq \sum_{\tilde{l}=0}^{\tilde{N}-1}\left (\sum_{j=0}^{E\left (\frac{2}{\rho} \right)} \int_{I_{N,l_{\tilde{l}}+j}} R(f)(\rho e^{i\t})^p\frac{d\theta}{2\pi} \right)^{\frac{q}{p}} \\
				&\lesssim \frac{1}{\rho^{\frac{q}{p}-1}}\sum_{\tilde{l}=0}^{\tilde{N}-1}\sum_{j=0}^{E\left (\frac{2}{\rho} \right)} \left ( \int_{I_{N,l_{\tilde{l}}+j}} R(f)(\rho e^{i\t})^p\frac{d\theta}{2\pi} \right)^{\frac{q}{p}} \\
				&\lesssim \frac{1}{\rho^{\frac{q}{p}-1}} \nm{ \left\{\left (R(f)_{[p]}(\rho) \right )_{N,l} \right\}_{l=0}^{N-1}}_{\ell^q}^q,
			\end{split}
		\end{equation}
where in the last inequality we have used that each interval $I_{N,l}$ appears at most two times in the sum $\sum_{\tilde{l}=0}^{\tilde{N}-1}\sum_{j=0}^{E\left (\frac{2}{\rho} \right)} \left ( \int_{I_{N,l_{\tilde{l}}+j}} R(f)(\rho e^{i\t})^p\frac{d\theta}{2\pi} \right)^{\frac{q}{p}}$.  Joining \eqref{casomenorunmedio} and \eqref{eq:2casomenorunmedio}, we get
\begin{equation*}
			\begin{split}
				M_q^q (r,f) \lesssim
\frac{1}{(\rho-r)^{\frac{q}{p}-1}} \nm{ \left\{\left (R(f)_{[p]}(\rho) \right )_{N,l} \right\}_{l=0}^{N-1}}_{\ell^q}^q, \quad 0\le r<\rho< \frac{1}{2}.
\end{split}
		\end{equation*}
		Concluding the proof.
	\end{proof}
	The next step to obtain a proof of \eqref{eq: th improve} for $0<q<\infty$ will be to replace the radial maximal operator on the right-hand side of \eqref{eq:radmax} by the function $f$ itself. This inequality lead us to consider the boundedness of the radial maximal function on an amalgam type space. 
	
	Let $1\leq p,q<\infty$ the classical amalgam space $\ell^q(L^p)$ is the space of measurable functions $f$ on $\R$ such that
	$$
	\nm{f}_{\ell^q(L^p)}^q = \sum_{n\in \Z} \left (\int_n^{n+1}\abs{f(x)}^p dx\right )^{\frac{q}{p}}<\infty.
	$$
	These spaces where introduced by Wiener in \cite{Wiener} and  have arisen in various areas of mathematical analysis such as  Tauberian theorems, Fourier multipliers and or approximation theory.  See \cite{FS} and the references therein for a detailed study and applications
of these spaces. We also consider the Hardy-Littlewood maximal operator 
	$$
	M(f)(x)=\sup_{x\in I}\frac{1}{|I|}\int_I \abs{f(y)}dy,\quad f\in L^1_{loc}(\R),\quad x\in\R,
	$$
	where the supremum is taken over all the finite closed interval $I$ such that $x\in I$ and $L^1_{loc}(\R)$ is the space of measurable functions that are integrable on any compact subset of $\R$. The boundedness of $M$ on $\ell^q(L^p)$ follows from \cite[Theorem 4.2 and Theorem~4.5]{Carton-Lebrun} (see also \cite[Theorem 5]{AguilOrtega}).
	\begin{lettertheorem}\label{th: maximal acotada amalgama}
		Let $1<p,q<\infty$. Then,  the Hardy-Littlewood maximal operator is bounded on $\ell^q(L^p)$.
	\end{lettertheorem}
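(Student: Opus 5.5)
The plan is to reduce the statement to the classical Hardy--Littlewood maximal theorem on $L^p(\R)$ together with a localization argument that exploits the unit-length structure of the amalgam norm. Let $f\in\ell^q(L^p)$ and write $f_n=f\chi_{[n,n+1)}$. I will split $M(f)$ into a local part and a far part. For $x\in[n,n+1)$ put $g_n=f\chi_{[n-1,n+2)}$ and $h_n=f-g_n$, so that $M(f)(x)\le M(g_n)(x)+M(h_n)(x)$.

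\textbf{Local part.} By the $L^p$-boundedness of $M$ for $p>1$,
$$\int_n^{n+1}M(g_n)^p\,dx\lesssim \int_{n-1}^{n+2}|f|^p\,dx .$$
Raising this to the power $q/p$ and summing over $n$ gives at most a constant times $\nm{f}_{\ell^q(L^p)}^q$, because each unit block is counted three times and $(a+b+c)^{q/p}\lesssim a^{q/p}+b^{q/p}+c^{q/p}$.

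\textbf{Far part.} Take $x\in[n,n+1)$ and an interval $I\ni x$ on which $h_n$ is nonzero. Such an $I$ reaches $[n+2,\infty)$ or $(-\infty,n-1)$, so $|I|\ge1$. If $I$ meets the unit blocks indexed by $k$ with $|k-n|\le L$, then $|I|\gtrsim L$, and by H\"older
$$\int_{n-1}^{n+2}|f|\le\Big(\int_{n-1}^{n+2}|f|^p\Big)^{1/p}\cdot 3^{1-1/p}.$$
Writing $a_k=\|f_k\|_{L^p}$, this gives
$$M(h_n)(x)\lesssim \sup_{L\ge1}\frac{1}{L}\sum_{|k-n|\le L+1}a_k\lesssim \mathcal M(a)(n),$$
where $\mathcal M$ is the discrete Hardy--Littlewood maximal operator on $\Z$. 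The bound holds uniformly in $x\in[n,n+1)$, so
$$\int_n^{n+1}M(h_n)^p\,dx\lesssim \mathcal M(a)(n)^p .$$
Since $q>1$, $\mathcal M$ is bounded on $\ell^q(\Z)$; I would deduce this from the continuous maximal theorem applied to the step function $\sum_k a_k\chi_{[k,k+1)}$. Summing then gives $\sum_n \mathcal M(a)(n)^q\lesssim\sum_n a_k^q=\nm{f}^q_{\ell^q(L^p)}$.

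\textbf{Combining.} Putting the two parts together with the quasi-triangle inequality in the $\ell^q$ sum yields $\nm{M f}_{\ell^q(L^p)}\lesssim\nm{f}_{\ell^q(L^p)}$. The main obstacle is the far-part estimate. The point to verify carefully is that an interval containing $x$ and reaching beyond the neighbouring blocks has length comparable to the number of blocks it meets. This holds because $|I|\ge1$ once $I$ leaves the triple block. One must also keep track that the averages use the $L^1$ norm on blocks, which H\"older controls by the $L^p$ block norm since the blocks have unit length.
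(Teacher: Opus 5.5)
Your argument is correct, but it is not the paper's route: the paper gives no proof of Theorem~\ref{th: maximal acotada amalgama} and imports it from \cite[Theorems 4.2 and 4.5]{Carton-Lebrun} (see also \cite[Theorem 5]{AguilOrtega}), where it appears within a more general study of operators on amalgam spaces. Your localization proof is elementary and self-contained. The triple-block piece $g_n$ is handled by the $L^p$ maximal theorem and bounded overlap. For the far piece, every interval $I\ni x$ on which $h_n$ is not a.e.\ zero has $|I|\ge 1$. This lets you dominate $M(h_n)$ on $[n,n+1)$, uniformly in $x$, by the discrete maximal function of the block norms $a_k=\|f_k\|_{L^p}$. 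Your reduction of the $\ell^q(\Z)$ bound for that operator to the continuous maximal theorem via $\sum_k a_k\chi_{[k,k+1)}$ also works: for $x\in[n,n+1)$ the interval $[n-L,n+L+1]$ contains $x$ and has length $2L+1$. Your route shows that $p>1$ is used only in the local part and $q>1$ only for the discrete maximal operator. That is all Lemma~\ref{lemma: radial maximal bounded} needs, since there the theorem is applied with exponents $2$ and $2q/p$; the citation provides generality the paper never uses.

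Two details should be stated more precisely. First, the H\"older step you need is on each unit block, $\int_k^{k+1}|f|\le a_k$, not on $[n-1,n+2]$, where $h_n$ vanishes. Second, the counting is that every block $[k,k+1)$ meeting $I\ni x$ satisfies $|k-n|\le L:=\lceil |I|\rceil\le 2|I|$. Hence $\frac{1}{|I|}\int_I|h_n|\le \frac{2}{L}\sum_{|k-n|\le L}a_k\lesssim \mathcal M(a)(n)$.
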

	In the next result, we obtain the boundedness of the radial maximal operator on an amalgam type space as a byproduct of the boundedness of the Hardy-Littlewood maximal operator on the classical amalgam space.
	\begin{lemma}\label{lemma: radial maximal bounded}
		Let $0<p<q<\infty$. Then,  there exists a constant $C=C(p,q)>0$ such that
		$$
		\nm{ \left\{\left (R(f)_{[p]}(1) \right )_{N,l} \right\}_{l=0}^{N-1}}_{\ell^q} \leq C \nm{ \left \{\left (f_{[p]}(1) \right )_{N,l} \right\}_{l=0}^{N-1}}_{\ell^q},\quad f\in H^p,\, N\in \N.
		$$
	\end{lemma}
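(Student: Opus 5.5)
Our approach is to reduce the statement to Theorem~\ref{th: maximal acotada amalgama}. The first step is the classical pointwise control of the radial maximal function by the Hardy--Littlewood maximal function of the boundary values. We fix an exponent $0<\gamma<p$; for instance $\gamma=p/2$, so that $p/\gamma=2>1$ and $q/\gamma>p/\gamma$. For $f\in H^p$, the function $|f|^\gamma$ is subharmonic and $|f(e^{i\t})|^\gamma\in L^{p/\gamma}(\T)\subset L^1(\T)$. Since $p/\gamma>1$, the function $|f|^\gamma$ is in fact dominated by the Poisson integral of $|f(e^{i\cdot})|^\gamma$ on $\T$. This gives
$$
R(f)(e^{i\t})^\gamma\le \sup_{0\le\rho<1}|f(\rho e^{i\t})|^\gamma\lesssim M_{\T}\big(|f|^\gamma\big)(\t),
$$
where $M_{\T}$ is the Hardy--Littlewood maximal operator on the circle. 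Here we use that radial limits of Poisson integrals are controlled by the maximal function (see \cite[Theorem~1.6]{Duren} or similar). Consequently,
$$
\left(R(f)_{[p]}(1)\right)_{N,l}^\gamma\lesssim \Big(\int_{I_{N,l}} \big(M_{\T}(|f|^\gamma)\big)^{p/\gamma}\frac{d\t}{2\pi}\Big)^{\gamma/p},
$$
and the claimed inequality becomes
$$
\nm{\{(M_{\T}h)_{[p/\gamma]}\}_{N,l}}_{\ell^{q/\gamma}}\lesssim \nm{\{h_{[p/\gamma]}\}_{N,l}}_{\ell^{q/\gamma}},\qquad h=|f(e^{i\cdot})|^\gamma,
$$
with exponents $1<P=p/\gamma<Q=q/\gamma<\infty$. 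This is an amalgam-space estimate at scale $1/N$ on $\T$.

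The second step is to transfer this estimate to $\ell^Q(L^P)$ on $\R$ and to remove the dependence on $N$ by dilation. We extend $h$ $2\pi$-periodically to $\R$ and rescale by $x=N\t/(2\pi)$, so that $I_{N,l}$ corresponds to $[l,l+1]$. Put $H(x)=h(2\pi x/N)$, truncated to a few periods, say $H\chi_{[-N,2N]}$. Two facts are needed here. First, the maximal function commutes with dilations. Second, $M_{\T}h$ at points of $[0,2\pi]$ is dominated by $M(H\chi_{[-N,2N]})$ at the corresponding points: intervals of length at most $2\pi$ are handled directly, and longer intervals contribute at most a constant times the average over the whole circle, which is itself bounded by the truncated maximal function. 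The Jacobian factor $(2\pi/N)^{1/P}$ appears identically on both sides and cancels. Applying Theorem~\ref{th: maximal acotada amalgama} in $\ell^Q(L^P)$ then gives
$$
\sum_{n}\Big(\int_n^{n+1}|M(H\chi)|^P\Big)^{Q/P}\lesssim \sum_{n=-N}^{2N-1}\Big(\int_n^{n+1}|H|^P\Big)^{Q/P}=3\sum_{l=0}^{N-1}\Big(\int_{l}^{l+1}|H|^P\Big)^{Q/P}.
$$
Raising to the power $\gamma/q$ and undoing the substitutions yields the lemma with a constant depending only on $p$ and $q$.

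We expect the main obstacle to be making the periodic-to-line transfer uniform in $N$. Specifically, we must check that the comparison between $M_{\T}$ and the dilated maximal operator on $\R$ holds with constants independent of $N$. It also needs care near the endpoints of $[0,2\pi]$, which is why the three-period truncation is used. The first step only requires the standard nontangential (in particular radial) maximal bound for $H^p$ functions via the subharmonic majorant $|f|^\gamma$.
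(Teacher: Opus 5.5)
Your proposal is correct and follows essentially the same route as the paper. Both arguments use the pointwise bound $R(f)^{p/2}\lesssim M_{\T}(|f|^{p/2})$, obtained from the Poisson majorant of $|f|^{p/2}$. They then extend periodically, truncate, dilate by $2\pi/N$, apply Theorem~\ref{th: maximal acotada amalgama} in $\ell^{2q/p}(L^2)$, and finish by periodicity. The differences are only cosmetic: the paper truncates to $[-\pi,3\pi]$ with a one-sided maximal operator over $|T|\le\pi$, whereas you truncate to three full periods.
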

	\begin{proof}
		For a measurable function $f$ on $\T$ denote 
		$$
		M_{\T}(f)(\t)=\sup_{0\leq |T|\leq \pi} \frac{1}{T}\int_0^T | f(e^{i(\t+t)})|\, dt,\quad \t\in[0,2\pi].
		$$
		 By  the proof of \cite[Theorem 1.8]{Duren}, 
		$$
		\abs{f(z)}^{\frac{p}{2}} \leq P[\abs{f}^{\frac{p}{2}}](z) \leq R(P[\abs{f}^{\frac{p}{2}}])(e^{i\t}) \leq 2 M_{\T}(\abs{f}^\frac{p}{2})(\t), \quad z=re^{i\t}\in\D,
		$$
where $P$ denotes the Poisson integral.
		So $R(f)(e^{i\t})^p\leq 4 M_{\T}(\abs{f}^\frac{p}{2})(\t)^2$, which implies that
		\begin{equation}\label{eq: eq1 radial bounded hormander}
			\begin{split}
				\nm{ \left \{\left (R(f)_{[p]}(1) \right )_{N,l} \right\}_{l=0}^{N-1}}_{\ell^q}^q\lesssim \sum_{l=0}^{N-1} \left ( \int_{I_{N,l}} M_\T(\abs{f}^\frac{p}{2})(\t)^2\frac{d\theta}{2\pi} \right )^{\frac{q}{p}}.
			\end{split}
		\end{equation}
		Now consider the function $$F(\t)=\begin{cases}
		 \abs{f(e^{i\t})}^\frac{p}{2}	&\text{if } \t\in[-\pi,3\pi]\\
		 0 &\text{otherwise}. 
		\end{cases}$$ 
Then, a simple computation shows that $M_\T(\abs{f}^\frac{p}{2})(\t) \leq M(F)(\t)$. Joining this inequality  with \eqref{eq: eq1 radial bounded hormander} and denoting $\tilde{q}=2\frac{q}{p}>2$,  we obtain
		\begin{equation}\label{eq: eq2 radial bounded maximal}
			\nm{ \left \{\left (R(f)_{[p]}(1) \right )_{N,l} \right\}_{l=0}^{N-1}}_{\ell^q}^q\lesssim \sum_{l\in \Z} \left ( \int_{I_{N,l}} M(F)(\t)^2\frac{d\theta}{2\pi} \right )^{\frac{\tilde{q}}{2}},
		\end{equation}
		here $I_{N,l}=\left [2\pi\frac{l}{N},2\pi\frac{l+1}{N} \right ]$ for every $l\in\Z$. \\
		
		For the next step, denote $F_\delta(x)=F(\delta x)$, $\delta>0$, and observe that
$M(F_\delta)(x) = M(F)(\delta x)$, $x\in\R$. So, making the change of variables $x=\frac{\t}{\delta_N}$ where $\delta_N=\frac{2\pi}{N}$ and  applying Theorem~\ref{th: maximal acotada amalgama}, it follows that
		\begin{equation}\label{eq: eq3 bounded maximal}
			\begin{split}
				&\sum_{l\in \Z} \left ( \int_{I_{N,l}} M(F)(\t)^2\frac{d\theta}{2\pi} \right )^{\frac{\tilde{q}}{2}} 
\\ &= \frac{ \delta_N^{\frac{\tilde{q}}{2}}}{(2\pi)\frac{\tilde{q}}{2}} \sum_{l\in \Z} \left ( \int_{l}^{l+1} M(F_{\delta_N})(x)^2 dx \right )^{\frac{\tilde{q}}{2}} \lesssim \frac{ \delta_N^{\frac{\tilde{q}}{2}}}{(2\pi)\frac{\tilde{q}}{2}} \sum_{l\in \Z} \left ( \int_{l}^{l+1} F_{\delta_N}(x)^2 dx \right )^{\frac{\tilde{q}}{2}} \\
				&= \sum_{l\in \Z} \left ( \int_{I_{N,l}} F(\t)^2 \frac{d\theta}{2\pi}\right )^{\frac{\tilde{q}}{2}} = \sum_{l=-E[\frac{N}{2}]-1}^{E[\frac{3}{2}N]}\left ( \int_{I_{N,l}} F(\t)^2 \frac{d\theta}{2\pi}\right )^{\frac{\tilde{q}}{2}}.
			\end{split}
		\end{equation}
Finally, since $\abs{f(e^{i\cdot})}^{\frac{p}{2}}$ is $2\pi$-periodic function
		$$
		\sum_{l=-E[\frac{N}{2}]-1}^{E[\frac{3}{2}N]}\left ( \int_{I_{N,l}} F(\t)^2 \frac{d\theta}{2\pi}\right )^{\frac{\tilde{q}}{2}} \leq 3 \sum_{l=0}^{N-1}\left ( \int_{I_{N,l}} \abs{f(e^{i\t})}^p \frac{d\theta}{2\pi}\right )^{\frac{\tilde{q}}{2}}\asymp
 \nm{ \left \{\left (f_{[p]}(1) \right )_{N,l} \right\}_{l=0}^{N-1}}_{\ell^q}^q.
		$$
		The proof ends by joining this last inequality with \eqref{eq: eq2 radial bounded maximal} and \eqref{eq: eq3 bounded maximal}.
	\end{proof}
\vspace{1em}
	Now we can deal with the proof of Theorem~\ref{th1: improvement}.
	
\subsection{Proof of Theorem~\ref{th1: improvement}. }

We begin with the proof of the inequality \eqref{eq: th improve}.		
		Let $0\leq r < \rho \leq 1$, $f\in \H(\D)$ 
and $N=N(r,\rho)=E \left(\frac{1}{\rho-r} \right)$. Assume that the right hand side of \eqref{eq: th improve} is finite. If $0<q<\infty$ the statement follows by joining Proposition~\ref{prop: radial bound integral} and Lemma~\ref{lemma: radial maximal bounded} applied to $f_\rho$
		$$
		M_q(r,f) \lesssim \frac{1}{(\rho-r)^{\frac{1}{p}-\frac{1}{q}}} \nm{ \left \{\left (R(f)_{[p]}(\rho) \right )_{N,l} \right\}_{l=0}^{N-1}}_{\ell^q} \lesssim  \frac{1}{(\rho-r)^{\frac{1}{p}-\frac{1}{q}}} 
\nm{ \left \{\left (f_{[p]}(\rho) \right )_{N,l} \right\}_{l=0}^{N-1}}_{\ell^q}^q.
		$$
		
		If  $q=\infty$, assume first that $\rho\geq \frac{1}{2}$ and take $\tilde{r}=\frac{\rho+r}{2}$. Let $\tilde{\t}\in[0,2\pi]$ such that $\abs{f(\tilde{r}e^{i\tilde{\t}})}=M_\infty(\tilde{r},f)$, then 
		\begin{equation}\label{eq: eq1 minfinity}
		M_\infty^p(r,f)\leq M_\infty^p(\tilde{r},f) =\abs{f(\tilde{r}e^{i\tilde{\t}})}^p \leq \int_0^{2\pi} \frac{1-(\frac{\tilde{r}}{\rho})^2}{\abs{1-\frac{\tilde{r}}{\rho}e^{i(\t-\tilde{\t})}}^2}\abs{f(\rho e^{i\t})}^p \frac{d\theta}{2\pi}.
		\end{equation}
		On the other hand, applying the definition of $\tilde{r}$ and the fact that $\rho\geq \frac{1}{2}$ we get
		$$
		1-\left(\frac{\tilde{r}}{\rho}\right)^2 \asymp \rho-r,\quad \text{ and } \quad \frac{\tilde{r}}{\rho} \geq \frac{1}{2},
		$$
		then 
		$$
		\int_0^{2\pi} \frac{1-(\frac{\tilde{r}}{\rho})^2}{\abs{1-\frac{\tilde{r}}{\rho}e^{i(\t-\tilde{\t})}}^2}\abs{f(\rho e^{i\t})}^p \frac{d\theta}{2\pi} \lesssim \int_0^{2\pi} \frac{\rho-r}{((\rho-r)+\abs{\t-\tilde{\t}})^2}\abs{f(\rho e^{i\t})}^p \frac{d\theta}{2\pi}.
		$$
		
		Let $\tilde{l}\in \{0,\ldots N\} $ such that $\tilde{\t}\in I_{N,\tilde{l}}$.  Assume without loss of generality that $0<\tilde{l}<N-1$, otherwise the following calculations are similar. Then, bearing in mind the definition of $I_{N,l}$, $\rho-r\asymp \frac{1}{N}$, \eqref{eq: eq1 minfinity} and previous inequality
		\begin{equation}
			\begin{split}\label{eq:infty1}
				M_\infty^p(r,f) &\lesssim \sum_{l=0}^{\tilde{l}-1} \int_{I_{N,l}} \frac{\rho-r}{((\rho-r)+(\tilde{\t}-\t))^2}\abs{f(\rho e^{i\t})}^p\frac{d\theta}{2\pi}\\
				&+ \int_{I_{N, \tilde{l}}} \frac{\rho-r}{((\rho-r)+\abs{\t-\tilde{\t}})^2}\abs{f(\rho e^{i\t})}^p \frac{d\theta}{2\pi} + \sum_{l=\tilde{l}}^{N-1} \int_{I_{N,l}} \frac{\rho-r}{((\rho-r)+(\t-\tilde{\t}))^2}\abs{f(\rho e^{i\t})}^p\frac{d\theta}{2\pi} \\
				&\lesssim  \sum_{l=0}^{\tilde{l}-1} \frac{1}{(\rho-r)(\tilde{l}-l)^2}\int_{I_{N,l}} \abs{f(\rho e^{i\t})}^p\frac{d\theta}{2\pi}
				+ \frac{1}{(\rho-r)}\int_{I_{N, \tilde{l}}} \abs{f(\rho e^{i\t})}^p\frac{d\theta}{2\pi} \\ &+ \sum_{l=\tilde{l}+1}^{N-1} \frac{1}{(\rho-r)(l-\tilde{l})^2}\int_{I_{N,l}} \abs{f(\rho e^{i\t})}^p \frac{d\theta}{2\pi} \leq \frac{\left (1+ 2\sum_{n=1}^{\infty}\frac{1}{n^2} \right)}{\rho-r} 
\nm{ \left \{\left (f_{[p]}(\rho) \right )_{N,l} \right \}_{l=0}^{N-1}}_{\ell^\infty}^p\\ &\lesssim \frac{1}{\rho-r}\nm{ \left \{\left (f_{[p]}(\rho) \right )_{N,l} \right\}_{l=0}^{N-1}}_{\ell^\infty}^p, \quad \frac{1}{2}\le \rho<1.
			\end{split}
		\end{equation}
		concluding the proof of this case. 
		
		Now, for $0<\rho<\frac{1}{2}$,  by replacing $\rho$ by $1$, $f$ by $f_\rho$ and $r$ by $\frac{r}{\rho}$ in \eqref{eq:infty1}, we obtain
		\begin{equation}\label{eq: eq2 minfinity scale1}
		M_\infty(r,f) \lesssim  \frac{\rho^{\frac{1}{p}}}{(\rho-r)^{\frac{1}{p}}}\nm{\{\left (f_{[p]}(\rho) \right )_{\tilde{N},\tilde{l}}\}_{\tilde{l}=0}^{\tilde{N}-1}}_{\ell^\infty}
		\end{equation}
		where $\tilde{N}=E \left (\frac{\rho}{\rho-r} \right)$. If $N=\tilde{N}$, the proof is finished.  If $N>\tilde{N}$,  using same notation of the proof of Proposition~\ref{prop: radial bound integral},  \eqref{eq: trick scale} and taking supremum on $l$
		$$
		\nm{ \left \{\left (f_{[p]}(\rho) \right )_{\tilde{N},\tilde{l}} \right\}_{\tilde{l}=0}^{\tilde{N}-1}}_{\ell^\infty} \lesssim \left (\frac{1}{\rho}\right )^\frac{1}{p} 
\nm{ \left \{\left (f_{[p]}(\rho) \right )_{N,l} \right\}_{l=0}^{N-1}}_{\ell^\infty}.
		$$
Therefore, 
\begin{equation}\label{eq: eq2 minfinity scale2}
M_\infty^p(r,f) \lesssim  \frac{1}{\rho-r}\nm{ \left \{\left (f_{[p]}(\rho) \right )_{N,l} \right\}_{l=0}^{N-1}}_{\ell^\infty}^p, \quad 0\le \rho<\frac{1}{2}.
\end{equation}
Joining \eqref{eq: eq2 minfinity scale1} and \eqref{eq: eq2 minfinity scale2} the proof of \eqref{eq: th improve} is finished.
	
	\vspace{1em}
	\par In order to prove \eqref{eq:improve1},  
for every $\lambda>1$ we denote by 
	$$
	\mathcal{L}(\lambda) = \set{f(z)=\sum_{k=0}^\infty a_k z^{n_k}\in \H(\D): n_{k+1}\geq \lambda n_k},
	$$
	the set of all lacunary series with step $\lambda$. The following lemma plays a key role in the proof of \eqref{eq:improve1}.
	
	\begin{letterlemma}\cite[Lemma 6.5]{zygmund}\label{lemma: lacunary}
		Let $E\subset[0,2\pi]$ with $\abs{E}>0$, $\lambda>1$ and $A>1$. Then, there exists a $N_0(A,\lambda,\abs{E})\in\N$ such that for any $f(z)=\sum_{k=0}^\infty a_k z^{n_k}\in \mathcal{L}(\lambda)$ with $n_0\geq N_0$ 
		$$
		\frac{\abs{E}}{2\pi A}\nm{f}_{H^2}^2 \leq  \int_E \abs{f(e^{i\t})}^2\frac{d\theta}{2\pi} \leq \frac{A\abs{E}}{2\pi}\nm{f}_{H^2}^2
		$$
	\end{letterlemma}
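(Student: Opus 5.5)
The plan is to expand $\abs{f(e^{i\t})}^2$ as a double sum, integrate it over $E$, and show that the off-diagonal part is small compared with $\nm{f}_{H^2}^2$ once $n_0$ is large. The diagonal part will supply exactly $\frac{\abs{E}}{2\pi}\nm{f}_{H^2}^2$.

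\emph{Reduction.} First assume that $f$ is a lacunary polynomial. For general $f\in\mathcal{L}(\lambda)\cap H^2$, the partial sums $f_M$ converge to the boundary function of $f$ in $L^2(\T)$. Hence $\int_E\abs{f_M}^2\to\int_E\abs{f}^2$, and the inequalities pass to the limit with the same constants, since $N_0$ does not depend on $M$. If $f\notin H^2$, the upper bound is trivial, and only the case $f\in H^2$ needs to be treated.

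\emph{Expansion.} Write $c(m)=\int_E e^{-im\t}\frac{d\t}{2\pi}$. Then
$$\int_E\abs{f(e^{i\t})}^2\frac{d\t}{2\pi}=\frac{\abs{E}}{2\pi}\sum_k\abs{a_k}^2+2\,\mathrm{Re}\sum_{j<k}a_k\overline{a_j}\,c(n_k-n_j).$$
The first term equals $\frac{\abs{E}}{2\pi}\nm{f}_{H^2}^2$. It therefore suffices to show that the cross term $X$ satisfies $\abs{X}\le\varepsilon\nm{f}_{H^2}^2$, where $\varepsilon=(1-A^{-1})\frac{\abs{E}}{2\pi}$. This choice gives both inequalities, because $1+(1-A^{-1})\le A$ for $A>1$.

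\emph{Bounding the cross term (main step).} The first ingredient is a counting fact: every integer $m\ge1$ has at most $C(\lambda)$ representations $m=n_k-n_j$ with $j<k$. Indeed, lacunarity gives $n_j\le n_k/\lambda$, so $m\ge(1-\lambda^{-1})n_k$. Together with $n_k\ge m$, this gives
$$n_k\in\left[m,\frac{\lambda m}{\lambda-1}\right].$$
Lacunarity allows at most $1+\log\frac{\lambda}{\lambda-1}/\log\lambda$ such $k$, and for each such $k$ the index $j$ is determined.

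The second ingredient is a lower bound on the differences: every difference satisfies
$$n_k-n_j\ge(\lambda-1)n_j\ge(\lambda-1)n_0.$$

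Combining these with Cauchy--Schwarz gives
$$\abs{X}\le\Big(\sum_{j<k}\abs{a_j}^2\abs{a_k}^2\Big)^{1/2}\Big(\sum_{j<k}\abs{c(n_k-n_j)}^2\Big)^{1/2}\le\nm{f}_{H^2}^2\Big(C(\lambda)\sum_{m\ge(\lambda-1)n_0}\abs{c(m)}^2\Big)^{1/2}.$$
By Bessel's inequality, $\sum_m\abs{c(m)}^2\le\frac{\abs{E}}{2\pi}<\infty$, so the tail tends to $0$ as $n_0\to\infty$. Choose $N_0$ so that
$$C(\lambda)\sum_{m\ge(\lambda-1)N_0}\abs{c(m)}^2<\varepsilon^2/4.$$
This $N_0$ depends on $E$ itself, through its Fourier coefficients, as well as on $A$, $\lambda$ and $\abs{E}$; this is harmless for the application to \eqref{eq:improve1}, where $E$ is a fixed interval.

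The main obstacle is precisely this last step: controlling the off-diagonal sum uniformly in the coefficients $a_k$. It is resolved by the bounded-multiplicity counting, which converts the sum into a Fourier tail of $\chi_E$.
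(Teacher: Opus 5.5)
Your proof is correct, and it is essentially the standard argument in Zygmund's book, which the paper quotes without proof. You expand $\int_E\abs{f}^2$; the diagonal gives exactly $\frac{\abs{E}}{2\pi}\nm{f}_{H^2}^2$. You then control the off-diagonal part by Cauchy--Schwarz, using three facts: each $m$ is a difference $n_k-n_j$ at most $C(\lambda)$ times, all differences are at least $(\lambda-1)n_0$, and the Fourier coefficients of $\chi_E$ are square summable. The factor $2$ you drop in the Cauchy--Schwarz line, and the conjugate on $c(n_k-n_j)$, are harmless given your $\varepsilon^2/4$ margin.

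Your caveat on $N_0$ is correct, and in fact necessary. The notation $N_0(A,\lambda,\abs{E})$ suggests $N_0$ depends only on $\abs{E}$, and the paper later uses it that way (``for every $E_N$ with $\abs{E_N}=\frac{2\pi}{N}$''). With that reading the lemma is false. Take $f(z)=z^n+z^{2n}\in\mathcal{L}(2)$ and $E=\{\theta\in[0,2\pi]:\cos(n\theta)\le-\cos\eta\}$, which has measure $2\eta$ for every $n$. Then $\int_E\abs{f}^2\frac{d\theta}{2\pi}\le(2-2\cos\eta)\frac{\abs{E}}{2\pi}$, which is strictly less than $\frac{\abs{E}}{2\pi A}\nm{f}_{H^2}^2$ for $A=2$ whenever $\eta<\pi/3$.

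The application needs only the arcs $I_{N,l}$. For any arc, $\abs{c(m)}\le\frac{1}{\pi\abs{m}}$ regardless of its position, so your $N_0$ then depends only on $\lambda$, $A$ and the arc length. This is exactly what the proof of \eqref{eq:improve1} requires.
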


	Now we can deal with the rest of the proof of Theorem~\ref{th1: improvement}.  
		Firstly, 
observe that
		$$
				\nm{ \left \{\left (f_{[p]}(1) \right )_{N,l} \right\}_{l=0}^{N-1}}_{\ell^\infty} = \left (\sup_{j=0,\ldots, N-1}\int_{I_{N,l}} \abs{f(e^{i\t})}^p\frac{d\theta}{2\pi}  \right )^\frac{1}{p} \leq 
\nm{f}_{H^p}.
		$$
	Moreover,	if $q<\infty$ then
		\begin{equation*}
			\begin{split}
				\nm{ \left\{\left (f_{[p]}(1) \right )_{N,l} \right\}_{l=0}^{N-1}}_{\ell^q}^q &\leq\nm{ \left\{\left (f_{[p]}(1) \right )_{N,l} \right\}_{l=0}^{N-1}}_{\ell^\infty}^{q-p}\sum_{l=0}^{N-1} \left (f_{[p]}(1) \right )_{N,l}^p \\
				&\leq   \nm{f}_{H^p}^{q-p}  \sum_{l=0}^{N-1} \int_{I_{N,l}} \abs{f(e^{i\t})}^p\frac{d\theta}{2\pi} = \nm{f}_{H^p}^q,
			\end{split}
		\end{equation*}
		so  \eqref{eq:trivial} holds.
		
		Now, to prove \eqref{eq:improve1} it is enough to see that for each $N\in\N$, there exists a function $f_N\in H^p$ such that
		\begin{equation}\label{eq: eq1 improve hp}
		\frac{\nm{f_N}_{H^p}}{	\nm{ \left\{\left ((f_N)_{[p]}(1) \right )_{N,l} \right\}_{l=0}^{N-1}}_{\ell^q}} \geq C N^{\frac{1}{p}-\frac{1}{q}},
		\end{equation}
		where $C=C(p)>0$. 
		
		First assume that $2=p<q\leq \infty$. Taking  $\lambda=A=2$ and $\abs{E}=\frac{2\pi}{N}$ in Lemma~\ref{lemma: lacunary},  there exists $N_0(N)\in \N$ and $f_N(z)=\sum_{k=0}^\infty a_k z^{2^{k+k_0(N)}}\in H^2\cap \mathcal{L}(2)$ with $2^{k_0(N)}\geq N_0(N)$ such that
		$$
		\frac{1}{2N}\nm{f_N}_{H^2}^2 
\leq \int_{E_N}\abs{f_N(e^{i\t})}^2\frac{d\theta}{2\pi} \leq 
\frac{2}{N}\nm{f_N}_{H^2}^2,
		$$
		for every $E_N\subset [0,2\pi]$ such that $\abs{E_N}=\frac{2\pi}{N}$. In particular, for every $l=0,\ldots, N-1$,
		$$
		\frac{1}{N^{\frac{1}{2}}}\nm{f_N}_{H^2}  \leq \left ((f_N)_{[2]}(1) \right )_{N,l} \leq \frac{2^{\frac{1}{2}}}{N^{\frac{1}{2}}}\nm{f_N}_{H^2}^2.
		$$
		Then, on the one hand, if $q=\infty$
		$$
		\nm{ \left\{\left ((f_N)_{[2]}(1) \right )_{N,l} \right\}_{l=0}^{N-1}}_{\ell^\infty} = \sup_{l=0,\ldots ,N-1} \left ((f_N)_{[2]}(1) \right )_{N,l} \leq \frac{2^{1/2}}{ N^{\frac{1}{2}}}\nm{f_N}_{H^2},
		$$
		on the other hand, if $q<\infty$
		\begin{equation*}
			\begin{split}
				\nm{ \left\{\left ((f_N)_{[2]}(1) \right )_{N,l} \right\}_{l=0}^{N-1}}_{\ell^q} &= \left (\sum_{l=0}^{N-1} \left ((f_N)_{[2]}(1) \right )_{N,l}^q \right )^{\frac{1}{q}}\\ 
				&\leq \frac{2^{\frac{1}{2}}}{N^{\frac{1}{2}}} \nm{f_N}_{H^2} \left ( \sum_{l=0}^{N-1} 1 \right)^{\frac{1}{q}} = \frac{2^{1/2}}{N^{\frac{1}{2}-\frac{1}{q}}}\nm{f_N}_{H^2}.
			\end{split}
		\end{equation*}
		In conclusion, \eqref{eq: eq1 improve hp} holds with $C=\frac{1}{2^{1/2}}$.
		
		Now if $p\neq 2$, for each $N\in \N$, let be $f_N\in H^2$  such that \eqref{eq: eq1 improve hp} holds for $p=2$. Let us consider  its factorization
		$$
		f_N=g_N B_N,
		$$
		where $B_N$ is the Blaschke product of the zeros of $f_N$ and $g_N\in H^2$ never nulls.  So, there exists an analytic function $F_{N,p}$ such that $F_{N,p}=g_N^{\frac{2}{p}}$. Moreover, 
		\begin{equation}\label{eq: eq2 norm equi fact}
		\nm{F_{N,p}}_{H^p}^p=\nm{g_N}_{H^2}^2=\nm{f_N}_{H^2}^2,
		\end{equation}
		and
		\begin{equation}\label{eq: eq3 norm equi fact}
			\begin{split}
				\nm{ \left\{\left ((F_{N,p})_{[p]}(1) \right )_{N,l} \right\}_{l=0}^{N-1}}_{\ell^q} &= \nm{\left  \{\left ((f_{N})_{[2]}(1) \right )_{N,l}^{\frac{2}{p}}\right \}_{l=0}^{N-1}}_{\ell^q},
			\end{split}
		\end{equation}
		then joining \eqref{eq: eq2 norm equi fact} with \eqref{eq: eq3 norm equi fact} and arguing as in case $p=2$, we obtain for $q=\infty$
		$$
		\nm{ \left\{\left ((F_{N,p})_{[p]}(1) \right )_{N,l} \right\}_{l=0}^{N-1}}_{\ell^\infty}= \sup_{l=0,\ldots,N-1} \left ((f_{N})_{[2]}(1) \right )_{N,l}^{\frac{2}{p}}  \leq \frac{2^{\frac{2}{p}}}{N^{\frac{1}{p}}} \nm{f_N}_{H^2}^{\frac{2}{p}} = \frac{2^{\frac{2}{p}}}{N^{\frac{1}{p}}} \nm{F_{N,p}}_{H^p}.
		$$
		On the other hand, if $q<\infty$, then
		\begin{equation*}
			\begin{split}
					\nm{\left \{\left ((F_{N,p})_{[p]}(1) \right )_{N,l} \right\}_{l=0}^{N-1}}_{\ell^q} &= \left (\sum_{l=0}^{N-1}\left ((f_{N})_{[2]}(1) \right )_{N,l}^{\frac{2q}{p}} \right )^{\frac{1}{q}}  \\
					&\leq \frac{2^{\frac{2}{p}}}{N^{\frac{1}{p}}} \nm{f_N}_{H^2}^{\frac{2}{p}} \left ( \sum_{l=0}^{N-1} 1 \right)^{\frac{1}{q}} = \frac{2^{\frac{2}{p}}}{N^{\frac{1}{p}-\frac{1}{q}}} \nm{F_{N,p}}_{H^p}.
			\end{split}
		\end{equation*}
		In both cases, we conclude that \eqref{eq: eq1 improve hp} holds with $C=\frac{1}{2^{\frac{2}{p}}}$, and the functions $F_{N,p}$. This finishes the proof.

	\section{A Carleson measure type problem} \label{sec:Carleson}
	In this section, we will use different properties of $\DD$ that are summarized in the following lemma. See \cite[Lemma~2.1]{PelSum14}.
	\begin{letterlemma}
		\label{lemma: caract dgorro}
		Let $\om$ be a radial weight. Then, the following statements are equivalent:
		\begin{itemize}
			\item[\rm(i)] $\om \in \DD$;
			\item[\rm(ii)] There exist $C=C(\om)\geq 1$ and $\a_0=\a_0(\om)>0$ such that
			$$ \omg(s)\leq C \left(\frac{1-s}{1-t}\right)^{\a}\omg(t), \quad 0\leq s\leq t<1,$$
			for all $\a\geq \a_0$;
			\item[\rm(iii)] There exists $C=C(\om)>0$ and $\lambda=\lambda(\om)>0$ such that
			$$
			\int_0^r\left (\frac{1-r}{1-s} \right)^\lambda \om(s)ds \leq C \omg(r),\quad 0\leq r < 1.
			$$
		\end{itemize}
	\end{letterlemma}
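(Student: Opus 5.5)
The plan is to prove the three statements of Lemma~\ref{lemma: caract dgorro} in the cyclic order (i)$\Rightarrow$(ii)$\Rightarrow$(iii)$\Rightarrow$(i). Throughout we use only that $\omg$ is non-increasing and positive on $[0,1)$.

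\emph{(i)$\Rightarrow$(ii).} Let $C_0=C(\om)$ be the doubling constant, so $\omg(r)\le C_0\omg\bigl(\frac{1+r}{2}\bigr)$. Fix $0\le s\le t<1$ and set $s_0=s$ and $s_{k+1}=\frac{1+s_k}{2}$, so that $1-s_k=2^{-k}(1-s)$. Choose $n\ge 0$ with $s_n\le t<s_{n+1}$. Iterating the doubling inequality and using monotonicity gives
$$\omg(s)\le C_0^{n}\omg(s_n)\le C_0^{n+1}\omg(s_{n+1})\le C_0^{n+1}\omg(t).$$
Since $2^{n}\le \frac{1-s}{1-t}$, we have $C_0^{n}=(2^n)^{\log_2 C_0}\le \bigl(\frac{1-s}{1-t}\bigr)^{\a_0}$ with $\a_0=\max\{\log_2 C_0,\epsilon\}$ for some fixed $\epsilon>0$. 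Hence (ii) holds with constant $C_0$. It also holds for every $\a\ge\a_0$, because $\frac{1-s}{1-t}\ge1$.

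\emph{(ii)$\Rightarrow$(iii).} Take $\lambda>\a_0$ and apply (ii) with $\a=\a_0$. Write $\om(s)\,ds=-d\omg(s)$ and integrate by parts on $[0,r]$:
$$\int_0^r\Bigl(\frac{1-r}{1-s}\Bigr)^\lambda\om(s)\,ds=(1-r)^\lambda\omg(0)-\omg(r)+\lambda(1-r)^\lambda\int_0^r\frac{\omg(s)}{(1-s)^{\lambda+1}}\,ds.$$
By (ii), $(1-r)^\lambda\omg(0)\le C\,(1-r)^{\lambda-\a_0}\omg(r)\le C\omg(r)$. For the last term, (ii) gives $\omg(s)\le C(1-s)^{\a_0}(1-r)^{-\a_0}\omg(r)$ for $s\le r$. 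Then
$$\lambda(1-r)^{\lambda-\a_0}\omg(r)\int_0^r(1-s)^{\a_0-\lambda-1}\,ds\le\frac{C\lambda}{\lambda-\a_0}\,\omg(r).$$
This is the main technical step, and its only delicate point is choosing $\lambda$ strictly larger than $\a_0$ so that the last integral converges at $s=r\to1$. The integration by parts is legitimate because $\omg$ is absolutely continuous. One could instead avoid it with Fubini, writing $\int_0^r(1-s)^{-\lambda}\om(s)\,ds$ as a layer-cake integral; either route works.

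\emph{(iii)$\Rightarrow$(i).} Restrict the integral in (iii), taken at the point $\frac{1+r}{2}$, to $s\in[r,\frac{1+r}{2}]$. On this interval $\frac{1-(1+r)/2}{1-s}\ge\frac12$, so
$$2^{-\lambda}\Bigl(\omg(r)-\omg\bigl(\tfrac{1+r}{2}\bigr)\Bigr)\le C\,\omg\bigl(\tfrac{1+r}{2}\bigr).$$
Rearranging gives $\omg(r)\le(1+2^\lambda C)\,\omg\bigl(\frac{1+r}{2}\bigr)$, which is (i).
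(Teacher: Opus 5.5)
Your proof is correct and complete. The paper does not prove this lemma; it quotes it from \cite[Lemma~2.1]{PelSum14}. Your three steps are the usual argument for this characterization: iterating the doubling inequality along $1-s_k=2^{-k}(1-s)$, integrating by parts against $-d\omg$ with $\lambda>\a_0$, and restricting (iii) at the point $\frac{1+r}{2}$ to the interval $[r,\frac{1+r}{2}]$.

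One wording fix in (ii)$\Rightarrow$(iii). The reason for taking $\lambda>\a_0$ is not that $\int_0^r(1-s)^{\a_0-\lambda-1}\,ds$ converges. That integral is finite for every $r<1$, and for $\lambda>\a_0$ it is in fact unbounded as $r\to1$. What matters is that it equals $\frac{(1-r)^{\a_0-\lambda}-1}{\lambda-\a_0}$, so the prefactor $(1-r)^{\lambda-\a_0}$ exactly cancels its growth. For $\lambda\le\a_0$ the product would be unbounded. Also, the left-hand side of your last display is missing the factor $C$ that appears on the right.
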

	We also need the following technical result.
	\begin{lemma}\label{lemma: weight integral radius}
		Let $\om\in\DDD$ and $K\in \N\setminus\{1\}$ such that \eqref{eq: def Dcheck} holds. Then, there exist positive constants $C_1 ,C_2$ such that
		$$
		C_1 \omg \left ( 1-\frac{1-r}{K}\right) \leq \int_r^{1-\frac{1-r}{K}}\om(s)ds \leq \omg(r) \leq C_2  \omg \left ( 1-\frac{1-r}{K}\right ),\quad 0\leq r <1.
		$$
	\end{lemma}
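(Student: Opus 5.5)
The inequalities are of three kinds, and I will prove them in order from easiest to hardest.

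\emph{Middle inequality.} Since $\om\ge0$ and $1-\frac{1-r}{K}<1$, we have
$$\int_r^{1-\frac{1-r}{K}}\om(s)\,ds\le\int_r^1\om(s)\,ds=\omg(r).$$

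\emph{Right inequality.} This comes from $\om\in\DD$ by iterating the doubling condition. Set $r_1=\frac{1+r}{2}$, so $1-r_1=\frac{1-r}{2}$. Each application of the $\DD$ condition halves the distance to $1$, and $\omg$ is nonincreasing. Let $m\in\N$ be such that $2^m\ge K$. Applying the $\DD$ condition $m$ times gives
$$\omg(r)\le C^m\,\omg\left(1-\frac{1-r}{2^m}\right)\le C^m\,\omg\left(1-\frac{1-r}{K}\right).$$
So we may take $C_2=C(\om)^m$. Alternatively, Lemma~\ref{lemma: caract dgorro}(ii) with $s=r$ and $t=1-\frac{1-r}{K}$ gives $C_2=CK^{\a_0}$ directly.

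\emph{Left inequality.} This uses $\om\in\Dd$ together with the given $K$ from \eqref{eq: def Dcheck}. Write $t=1-\frac{1-r}{K}$. Then
$$\int_r^{t}\om(s)\,ds=\omg(r)-\omg(t).$$
Condition \eqref{eq: def Dcheck} gives $\omg(r)\ge C\,\omg(t)$ with $C>1$. Hence
$$\omg(r)-\omg(t)\ge (C-1)\,\omg(t),$$
and $C_1=C-1>0$ works.

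The only point needing care is the right inequality, namely matching the halving step of $\DD$ to the scale $K$. The monotonicity of $\omg$ handles this once $2^m\ge K$.
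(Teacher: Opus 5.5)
Your proposal is correct and follows essentially the same route as the paper. The left inequality comes from the $\Dd$ condition via $\omg(r)-\omg\left(1-\frac{1-r}{K}\right)\ge (C-1)\,\omg\left(1-\frac{1-r}{K}\right)$, and the right inequality from Lemma~\ref{lemma: caract dgorro}(ii); your iteration of the $\DD$ condition with $2^m\ge K$ is an equally valid, more elementary substitute for that second step.
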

	\begin{proof}
		Since $\om\in \Dd$, if  $C>1$ is that from \eqref{eq: def Dcheck}. Then,
		$$
		\int_r^{1-\frac{1-r}{K}}\om(s)ds = \omg(r)-\omg \left ( 1-\frac{1-r}{K} \right ) \geq (C-1)\omg\left ( 1-\frac{1-r}{K} \right ), \quad 0\leq r <1.
		$$
		On the other hand, if $\alpha>0$ and $C>0$ are the ones from Lemma~\ref{lemma: caract dgorro} (ii), then
		$$
		\int_r^{1-\frac{1-r}{K}}\om(s)ds \leq \omg(r) \leq C K^\a \omg\left ( 1-\frac{1-r}{K} \right ),\quad 0\leq r <1.
		$$
This finishes the proof.
	\end{proof}
	
En route to the proof of the equivalence (i)$\Leftrightarrow$(ii) of Theorems~\ref{th2: charact Tg} and \ref{th2: charact SgMg}, we  prove the following result which generalizes Luecking's assertion on the equivalence between   \eqref{eq: carleson-type inequality} and \eqref{eq:Luecking-discrete} for $\om=1$.
\begin{proposition}\label{propo: carleson type}
		Let $0<p,q,s,t<\infty$, $\om\in\DDD$, $n\in\N\cup\{0\}$ and $K=K(\om)\in \N \setminus \{1\}$ such that \eqref{eq: def Dcheck} holds. If $\mu_r$ are positive Borel measures on $[0,2\pi]$ varying on $0\leq r<1$ and $\nu$ a positive Borel measure on $[0,1)$. Then, the following conditions  are equivalent:
	\begin{itemize}
		\item[(i)] There exists a constant $C>0$ such that the Carleson-type inequality \eqref{eq: carleson-type inequality} holds;
		\item[(ii)] There exists a constant $C>0$ such that
		\begin{equation}\label{eq: carleson-type discrete}
		\left (\sum_{j=1}^\infty \int_{r_{j-1}}^{r_j}K^{jt(n+\frac{1}{p})}\omg(r_j)^{-\frac{t}{q}} \left (\sum_{l=0}^{K^{j+2}-1} \abs{a_{j,l}}^{s} \mu_r(I_{K^{j+2},l}) \right)^{\frac{t}{s}} d\nu(r)\right )^{\frac{1}{t}} \leq C \nm{a_{j,l}}_{\ell^{p,q}}.
		\end{equation}
	\end{itemize}
	Moreover,
	$$
	\inf\{C>0: \text{ such that } \eqref{eq: carleson-type inequality} \text{ holds} \} \asymp \inf\{C>0: \text{ such that } \eqref{eq: carleson-type discrete} \text{ holds} \}.
	$$
	\end{proposition}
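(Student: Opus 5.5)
The plan is to prove both implications by passing through the atomic decomposition and the discretization of the $\Apq{p}{q}$ norm from \cite{PelRatSierra}, with the lattice adapted to $K$. Put $r_j=1-K^{-j}$ and let $R_{j,l}=\{re^{i\t}: r_{j-1}\le r<r_j,\ \t\in I_{K^{j+2},l}\}$. Each $R_{j,l}$ is a Carleson-type box of side comparable to $K^{-j}$, and $\int_{r_{j-1}}^{r_j}\om\asymp\omg(r_j)$ by Lemma~\ref{lemma: weight integral radius}. The two facts I will use are the following.

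First, a discretization of the norm. For $f\in\Apq{p}{q}$, set $b_{j,l}=\sup_{z\in \widetilde R_{j,l}}|f^{(n)}(z)|$, where $\widetilde R_{j,l}$ is a slightly enlarged box. Then
\[
\nm{\{K^{-j(n+\frac1p)}\omg(r_j)^{\frac1q}b_{j,l}\}}_{\ell^{p,q}}\lesssim\nm{f}_{\Apq{p}{q}}.
\]
To prove it, apply subharmonicity of $|f^{(n)}|^p$ on discs of radius $\asymp K^{-j}$, use Cauchy's estimate to remove the derivative at the cost of $K^{jn}$, and sum over $l$ within an annulus. This turns $K^{-j}\sum_l b_{j,l}^p$ into a local $L^p$ average of $f$ over a slightly larger annulus. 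Then take the $q$-th power, multiply by $\omg(r_j)$, and use $\om\in\DDD$ (Lemma~\ref{lemma: caract dgorro} and Lemma~\ref{lemma: weight integral radius}) to compare with $\int M_p^q(r,f)\om(r)\,dr$. Only finitely many neighbouring annuli overlap.

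Second, an atomic decomposition. For any sequence $\{\lambda_{j,l}\}\in\ell^{p,q}$, the function
\[
F=\sum_{j,l}\lambda_{j,l}K^{j\frac1p}\omg(r_j)^{-\frac1q}\Big(\frac{1-|z_{j,l}|}{1-\overline{z_{j,l}}z}\Big)^{M}
\]
satisfies $\nm{F}_{\Apq{p}{q}}\lesssim\nm{\lambda}_{\ell^{p,q}}$ for $M$ large. Here $z_{j,l}$ is the "centre" of $R_{j,l}$. I will quote this from \cite{PelRatSierra}.

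\emph{(ii)$\Rightarrow$(i).} Given $f$, note that for $re^{i\t}$ with $r\in[r_{j-1},r_j)$ and $\t\in I_{K^{j+2},l}$ we have $|f^{(n)}(re^{i\t})|\le b_{j,l}$. Hence
\[
\int_0^{2\pi}|f^{(n)}(re^{i\t})|^s\,d\mu_r\le\sum_l b_{j,l}^s\mu_r(I_{K^{j+2},l}).
\]
Put $a_{j,l}=K^{-j(n+\frac1p)}\omg(r_j)^{\frac1q}b_{j,l}$. Inserting this into the left side of \eqref{eq: carleson-type inequality}, split as $\sum_j\int_{r_{j-1}}^{r_j}$, gives exactly the left side of \eqref{eq: carleson-type discrete}. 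That is bounded by $C\nm{a}_{\ell^{p,q}}\lesssim C\nm{f}_{\Apq{p}{q}}$ by the first fact. The interval $[0,r_0)$ is empty since $r_0=0$.

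\emph{(i)$\Rightarrow$(ii).} This is the main obstacle. We only need nonnegative $a_{j,l}$, finitely supported. I will use Khinchine/Rademacher randomization in the style of Luecking. Take $F_\omega=\sum\epsilon_{j,l}(\omega)a_{j,l}K^{j/p}\omg(r_j)^{-1/q}e_{j,l}$, where the $e_{j,l}$ are the normalized kernels above. Apply (i) to $F_\omega$ and average over $\omega$. The hard point is that the exponents $s/t$ and $t/s$ sit outside the inner sums, so Fubini is not immediate. I will handle it as follows:
\begin{itemize}
\item[(1)] If $t\ge s$, first use Minkowski to move the expectation inside; otherwise use Hölder/Kahane's inequality, which says all moments of Rademacher sums are comparable. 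This yields the square function $\big(\sum_{j',l'}|a_{j',l'}|^2K^{2j'/p}\omg(r_{j'})^{-2/q}|e_{j',l'}^{(n)}|^2\big)^{1/2}$.
\item[(2)] Bound the square function from below on $R_{j,l}$ by its single diagonal term $|a_{j,l}|K^{j(n+1/p)}\omg(r_j)^{-1/q}$, using $|e_{j,l}^{(n)}|\gtrsim K^{jn}$ on $R_{j,l}$.
\end{itemize}
The right side stays $\lesssim C\nm{a}_{\ell^{p,q}}$ by the atomic bound. Summing over $l$ and integrating over $r\in[r_{j-1},r_j)$ against $\nu$ recovers \eqref{eq: carleson-type discrete}. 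The lower bound on $R_{j,l}$ may require the sampling grid to be finer than the boxes. If so, I will refine by a fixed factor and regroup with finite overlap, which explains the $K^{j+2}$ indexing. In all steps the constants depend only on $p,q,s,t,n,\om$, which gives the comparability of the infima.
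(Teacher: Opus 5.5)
Your proposal is correct and follows essentially the same route as the paper. For (ii)$\Rightarrow$(i) you use the sup-over-boxes discretization of the $\Apq{p}{q}$ norm, which the paper simply quotes as Lemma~\ref{lemma: descomposicion norma bloques} on the boxes $Q_{j,l}$ themselves, so no enlargement is needed; for (i)$\Rightarrow$(ii) you use Rademacher-randomized atoms centred at the $Q_{j,l}$, a moment comparison plus Khinchine to reach the square function, and the diagonal bound $\frac{1-|z_{j,l}|^2}{|1-\overline{z_{j,l}}re^{i\theta}|}\gtrsim\chi_{I_{K^{j+2},l}}(\theta)$, so no grid refinement is needed either. Two small corrections: for $t\ge s$ the step you call Minkowski is really Jensen's inequality for the convex map $x\mapsto x^{t/s}$, and for $t<s$ with $s<1$ the space $L^s(\mu_r)$ is only quasi-Banach, so the moment comparison you need is Kalton's extension of Kahane's inequality, which the paper cites and applies uniformly in all cases.
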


We need some notation and recall some results to prove Proposition~\ref{propo: carleson type}.	We write $\rho(a,z)=\left |\frac{a-z}{1-\overline{a}z} \right |$ for the pseudohyperbolic distance between $z,a\in \D$. A sequence $\{z_k\}_{k=0}^\infty$ in $\D$ is called separated if $\inf_{k\neq j}\rho(z_k,z_j)>0$. Now each sequence $\{z_k\}$ in $\D$, is re-indexed in the following way depending on $\om$. Let $K\in\N\setminus\{1\}$ such that \eqref{eq: def Dcheck} holds, then for each $j\in\N$, let $\{z_{j,l}\}_l$ denote the points of the sequence $\{z_k\}$ in the annulus $A_j =A_j(K)=\{z: r_{j-1} \leq |z| < r_j \}$, where $r_j=r_j(K)=1-K^{-j}$.
	\begin{lettertheorem}\label{th: atomic kian}\cite[Theorem 1]{PelRatSierra}
		Let $0<p\leq \infty$, $0<q<\infty$, $\om\in \DDD$ and $\{z_{j,l}\}$ a separated sequence in $\D$. Let $\a=\a(\om)>0$ and $\lambda=\lambda(\om)>0$ such that Lemma~\ref{lemma: caract dgorro} (ii) and (iii) holds. If $M>1+\frac{1}{p} + \frac{\a+\lambda}{q}$ and $\{a_{j,l}\}_{j,l}\in \ell^{p,q}$, then the function
		$$
		F(z)=\sum_{j,l}a_{j,l}\frac{\left (1-\abs{z_{j,l}} \right)^{M-\frac{1}{p}}\omg(z_{j,l})^{-\frac{1}{q}}}{(1-\overline{z_{j,l}}z)^M}\in\H(\D),
		$$
		and there exists a constant $C=C(M,\om,p,q)>0$ such that
		$$
		\nm{F}_{\Apq{p}{q}} \leq C \nm{\{a_{j,l}\}}_{\ell^{p,q}}.
		$$
	\end{lettertheorem}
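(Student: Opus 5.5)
The plan is to estimate $M_p(r,F)$ one annulus at a time and then sum with a discrete Schur-type argument, using Lemma~\ref{lemma: caract dgorro} and Lemma~\ref{lemma: weight integral radius} to control the weight ratios.

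\textbf{Decomposition and kernel estimates.} Write $F=\sum_j F_j$, where
$$F_j(z)=\sum_l a_{j,l}\,k_{j,l}(z),\qquad k_{j,l}(z)=\frac{(1-|z_{j,l}|)^{M-\frac1p}\,\omg(z_{j,l})^{-\frac1q}}{(1-\overline{z_{j,l}}z)^M}.$$
Since $z_{j,l}\in A_j$, we have $1-|z_{j,l}|\asymp K^{-j}$, and $\omg(z_{j,l})\asymp\omg(r_j)$ by Lemma~\ref{lemma: weight integral radius}. Separation implies that $A_j$ contains at most $\lesssim K^{j}$ points, and that their arguments are $\gtrsim K^{-j}$-separated up to a bounded overlap.

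Fix $r\in A_k$. The core estimate is a bound for $M_p(r,F_j)$ in terms of $\|\{a_{j,l}\}_l\|_{\ell^p}$.
\begin{itemize}
\item For $0<p\le1$, use the $p$-triangle inequality together with $M_p^p(r,(1-\bar a z)^{-M})\asymp(1-|a|r)^{1-Mp}$.
\item For $p>1$, use the pointwise H\"older/Schur inequality
$$\Big|\sum_l a_{j,l}k_{j,l}\Big|^p\le\Big(\sum_l|a_{j,l}|^p|k_{j,l}|\Big)\Big(\sum_l|k_{j,l}|\Big)^{p-1}.$$
Then bound $\sup_{|z|=r}\sum_l|k_{j,l}(z)|$ by comparing the sum over the separated points with an integral over $A_j$, and integrate $|k_{j,l}|$ over the circle $|z|=r$.
\end{itemize}
Writing $x_j=\|\{a_{j,l}\}_l\|_{\ell^p}$, the expected outcome is
$$M_p(r,F_j)\lesssim x_j\,\omg(r_j)^{-\frac1q}\ \text{ for } j\le k,\qquad M_p(r,F_j)\lesssim x_j\,\omg(r_j)^{-\frac1q}K^{-(j-k)(M-1-\frac1p)}\ \text{ for } j>k.$$
The first case holds because the bumps have width $K^{-j}$ and height $\asymp K^{j/p}$. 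The second case comes from the size of the kernel when $1-r\gg 1-|z_{j,l}|$. The same bounds also give locally uniform convergence of the series, hence $F\in\H(\D)$.

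\textbf{Summation over annuli.} Combining with Lemma~\ref{lemma: weight integral radius}, which gives $\int_{A_k}\om\asymp\omg(r_k)$, we get
$$\|F\|_{\Apq pq}^q\lesssim\sum_k\omg(r_k)\Big(\sum_{j\le k}x_j\omg(r_j)^{-\frac1q}+\sum_{j>k}x_j\omg(r_j)^{-\frac1q}K^{-(j-k)(M-1-\frac1p)}\Big)^q.$$
\begin{itemize}
\item For $j\le k$, iterating \eqref{eq: def Dcheck} gives $\omg(r_k)/\omg(r_j)\le C^{-(k-j)}$, which is geometric decay.
\item For $j>k$, Lemma~\ref{lemma: caract dgorro}(ii) gives $\omg(r_k)/\omg(r_j)\lesssim K^{\a(j-k)}$. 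This is beaten by $K^{-(j-k)q(M-1-1/p)}$ as soon as $M>1+\frac1p+\frac{\a}{q}$.
\end{itemize}
In both cases the matrix $\big(\omg(r_k)/\omg(r_j)\big)^{1/q}\times(\text{kernel factor})$ has entries decaying geometrically in $|j-k|$. A discrete Schur test, or Young's inequality for $q\ge1$ and the $q$-triangle inequality for $q<1$, then yields $\|F\|_{\Apq pq}^q\lesssim\sum_j x_j^q=\|\{a_{j,l}\}\|_{\ell^{p,q}}^q$.

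\textbf{Expected obstacle.} The main difficulty is the $p>1$ circle estimate: obtaining the correct power of $(1-r)/(1-r_j)$ uniformly, in particular near $j\approx k$. This step is where the $\lambda$ from Lemma~\ref{lemma: caract dgorro}(iii) may enter, since $\om$ over the annulus must be integrated against a kernel factor $\big((1-r)/(1-r_j)\big)^{\lambda}$ rather than just sampled at $r_k$. I would first try to handle this step by replacing the sup over the circle with an exact computation of $\sum_l|k_{j,l}(re^{i\t})|$ as a Riemann sum over the separated arguments.
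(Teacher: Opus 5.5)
The paper gives no proof of this statement. It is quoted from \cite[Theorem~1]{PelRatSierra}, so your annulus-by-annulus argument has to stand on its own, and it does. The structure is sound:
\begin{itemize}
\item bound $M_p(r,F_j)$ uniformly for $r\in A_k$;
\item get decay in $k-j$ for $j\le k$ by iterating \eqref{eq: def Dcheck};
\item get decay in $j-k$ for $j>k$ from the kernel beating Lemma~\ref{lemma: caract dgorro}(ii);
\item close with a discrete convolution estimate.
\end{itemize}
It actually proves slightly more than stated. Lemma~\ref{lemma: caract dgorro}(iii) is never needed, and the only constraint on $M$ that comes out is $M>\max\{1,\tfrac1p\}+\tfrac{\a}{q}$. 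This is implied by the hypothesis, and for $\om\equiv1$, $q=p$ it is the classical condition $M>\max\{1,\tfrac1p\}+\tfrac1p$ for atomic decomposition in $A^p$. So the extra $\tfrac{\lambda}{q}$ in the stated hypothesis is harmless for your route.

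\textbf{The step you flag as the main obstacle closes directly, and $\lambda$ does not enter there.} For $r\in A_k$, use $\abs{1-\overline{z_{j,l}}re^{i\t}}\asymp(1-\abs{z_{j,l}}r)+\abs{\t-\arg z_{j,l}}$. Use also that each arc of length $K^{-j}$ contains the arguments of only boundedly many $z_{j,l}$. Then, uniformly in $r\in A_k$,
\[
\sup_{\t}\sum_l\abs{k_{j,l}(re^{i\t})}\lesssim \omg(r_j)^{-\frac1q}K^{\frac jp}\delta_{jk},\qquad M_1(r,k_{j,l})\lesssim \omg(r_j)^{-\frac1q}K^{-j(1-\frac1p)}\delta_{jk},\qquad \delta_{jk}=K^{-(j-k)^+(M-1)}.
\]
Your H\"older/Schur inequality then gives $M_p(r,F_j)\lesssim x_j\,\omg(r_j)^{-\frac1q}\delta_{jk}$ for $1<p\le\infty$; for $p=\infty$ only the first estimate is needed. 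For $p\le1$, the $p$-triangle inequality gives decay $K^{-(j-k)^+(M-\frac1p)}$. Both are stronger than the exponent $M-1-\tfrac1p$ you wrote, which is still a correct, weaker bound.

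\textbf{One step needs repair: the summation for $p<1$.} Your display $\sum_k\omg(r_k)\big(\sum_j\cdots\big)^q$ applies the triangle inequality for $M_p(r,\cdot)$ to infinitely many $F_j$. That fails, because $M_p(r,\cdot)$ is only a quasi-norm when $p<1$. Use $M_p^p(r,F)\le\sum_jM_p^p(r,F_j)$ instead. This leads to $\sum_k\big(\sum_j x_j^p d_{jk}^p\big)^{q/p}$, where $d_{jk}=(\omg(r_k)/\omg(r_j))^{1/q}$ times the kernel factor decays geometrically in $\abs{j-k}$. The same argument then finishes: Young's inequality if $\tfrac qp\ge1$, and the $\tfrac qp$-triangle inequality otherwise.
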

	Throughout the rest of the paper, we will denote for each $j\in\N$ and  $l\in \{0, \dots K^{j+2}-1\}$,
\begin{equation}\label{eq:Qij}
Q_{j,l} = \{z\in\D : r_{j-1}\leq |z| < r_j, \arg z\in I_{K^{j+2},l} \} .
\end{equation}
	The next lemma follows from \cite[p. 743]{PelRatSierra} (see also \cite[Lemma 4]{PelRatSierra}) and will also play a key role in the proof of Proposition~\ref{propo: carleson type}.
	\begin{letterlemma} \label{lemma: descomposicion norma bloques}
		Let $0<p,q<\infty$, $n\in\N\cup\{0\}$, $\om\in \DDD$ and $K\in \N\setminus\{1\}$ such that \eqref{eq: def Dcheck} holds. Then,
		$$
		\nm{\left \{\abs{f^{(n)}_{j,l}}K^{-j(n+\frac{1}{p})}\omg(r_j)^{\frac{1}{q}}\right \}}_{\ell^{p,q}}\lesssim\nm{f}_{\Apq{p}{q}}, \quad f\in \H(\D),
		$$
		where $\abs{f_{j,l}^{(n)}} = \sup_{z\in Q_{j,l}}\abs{f^{(n)}(z)}$.
	\end{letterlemma}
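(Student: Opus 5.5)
The plan is to control each block supremum by an area integral over a slightly enlarged block, using the Cauchy estimates together with subharmonicity. Summing in $l$ will then give a bound by an integral mean on a slightly larger circle, and summing in $j$ with $\omg(r_j)$ will give the $\Apq{p}{q}$ norm via Lemma~\ref{lemma: weight integral radius}. All constants may depend on $K$, $n$, $p$, $q$ and $\om$.

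\emph{Step 1 (pointwise estimate).} Fix $j\in\N$, $l$, and $z\in Q_{j,l}$. Let $\delta_j=cK^{-j}$ with $c=c(K)>0$ small, chosen so that $D(z,\delta_j)\subset\{r_{j-2}'\le|w|<r_{j+1}\}$, where $r_{j-2}'=\max(0,r_{j-2})$ (for $j=1$ simply take $|w|\ge 0$). The Cauchy estimates give
$$|f^{(n)}(z)|\lesssim \delta_j^{-n}\sup_{D(z,\delta_j/2)}|f|.$$
Subharmonicity of $|f|^p$ (valid for every $0<p<\infty$) gives, for $w\in D(z,\delta_j/2)$,
$$|f(w)|^p\lesssim \delta_j^{-2}\int_{D(z,\delta_j)}|f|^p\,dA.$$
Combining the two,
$$\abs{f^{(n)}_{j,l}}^p\lesssim K^{j(np+2)}\int_{Q^*_{j,l}}|f|^p\,dA.$$
Here $Q^*_{j,l}$ is the polar box with radii in $[r'_{j-2},r_{j+1})$ and argument within $\delta_j$ of $I_{K^{j+2},l}$. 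Since $I_{K^{j+2},l}$ has length $2\pi K^{-j-2}$ and $\delta_j\asymp K^{-j}$, each $Q^*_{j,l}$ meets only a bounded number, depending on $K$, of angular sectors $\arg w\in I_{K^{j+2},l'}$. Hence the family $\{Q^*_{j,l}\}_l$ has bounded overlap.

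\emph{Step 2 (sum in $l$).} By bounded overlap,
$$\sum_l \abs{f^{(n)}_{j,l}}^pK^{-j(np+1)}\lesssim K^{j}\int_{r'_{j-2}}^{r_{j+1}}M_p^p(x,f)\,dx.$$
Since $M_p(x,f)$ is nondecreasing in $x$ and $r_{j+1}-r'_{j-2}\le K^{2-j}$, the right-hand side is $\lesssim M_p^p(r_{j+1},f)$. Therefore the inner $\ell^p$ norm is $\lesssim M_p(r_{j+1},f)$.

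\emph{Step 3 (sum in $j$).} Apply Lemma~\ref{lemma: weight integral radius} with $r=r_{j+1}$; note that $1-\frac{1-r_{j+1}}{K}=r_{j+2}$. The left inequality gives $\int_{r_{j+1}}^{r_{j+2}}\om\ge C_1\omg(r_{j+2})$. Applying the right inequality twice (first with $r=r_j$, then with $r=r_{j+1}$) gives $\omg(r_j)\le C_2^2\,\omg(r_{j+2})$. Together,
$$\omg(r_j)\lesssim\int_{r_{j+1}}^{r_{j+2}}\om(x)\,dx.$$
Using again that $M_p(r_{j+1},f)\le M_p(x,f)$ for $x\ge r_{j+1}$, we obtain
$$\sum_j M_p^q(r_{j+1},f)\,\omg(r_j)\lesssim\sum_j\int_{r_{j+1}}^{r_{j+2}}M_p^q(x,f)\om(x)\,dx\le\nm{f}^q_{\Apq{p}{q}}.$$
This is the claimed inequality.

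The only delicate point is the geometry in Step 1. The angular blocks, of width about $K^{-j-2}$, are much narrower than the radial width $K^{-j}$. Consequently the disc used in the subharmonicity argument straddles about $K^2$ angular sectors, and one must check that this produces only a $K$-dependent overlap constant. A secondary point is the first annuli near the origin: there $r_{j-2}$ can be negative, and this is handled by the truncation $r'_{j-2}=\max(0,r_{j-2})$.
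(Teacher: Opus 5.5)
Your proof is correct. There is nothing in the paper to compare it with, because the paper does not prove this lemma; it imports it from \cite[p.~743, Lemma~4]{PelRatSierra}. Your argument therefore supplies a self-contained proof along the standard lines: Cauchy estimates plus subharmonicity of $|f|^p$ on discs of radius $\asymp K^{-j}$, bounded overlap of enlarged polar boxes, monotonicity of $M_p(\cdot,f)$, and $\omg(r_j)\lesssim\int_{r_{j+1}}^{r_{j+2}}\om$ from Lemma~\ref{lemma: weight integral radius}. The bookkeeping $K^{-j(np+1)}\cdot K^{j(np+2)}\cdot K^{-j+2}\asymp 1$ checks out.

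One geometric detail needs a small fix.

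\emph{The case $j=1$.} The sectors $Q_{1,l}$ reach the origin. For $|z|$ small, the disc $D(z,\delta_1)$ contains points of every argument, so it is not contained in your angular box $Q^*_{1,l}$. The truncation $r'_{j-2}$ handles only the radial side, not this. The gap is harmless: take $Q^*_{1,l}=\{|w|<r_2\}$ for every $l$. Since there are only $K^3$ such indices, the overlap is at most $K^3$, a $K$-dependent constant.

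\emph{The angular width for $j\ge 2$.} Here $|z|\ge 1/2$, so the angular enlargement should be $C\delta_j$ rather than $\delta_j$. This only changes constants.
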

	
	\begin{Prf}{\em{Proposition~\ref{propo: carleson type}}}
		$\mathbf{(i) \implies (ii)}$. \\
	
		Let be 
		$$z_{j,l}=\frac{r_j+r_{j-1}}{2}e^{2\pi \frac{l+\frac{1}{2}}{K^{j+2}}}, \quad j\in \N, \quad l=0,1,\ldots, K^{j+2}-1,$$
		the middle points of the rectangles $Q_{j,l}$, it is clear that $\{z_{j,l} \}$ is a separated sequence.
		Let $\{a_{j,l}\}$ be a positive and finite sequence and $\{r_{j,l}\}$ be Rademacher's functions (see \cite[Appendix A]{Duren}). Let us consider the functions
		$$
		F_{\tau}(z) = \sum_{j,l}a_{j,l}r_{j,l}(\tau)\frac{\left (1-\abs{z_{j,l}} \right)^{M-\frac{1}{p}}\omg(z_{j,l})^{-\frac{1}{q}}}{(1-\overline{z_{j,l}}z)^M},\quad \tau\in[0,1],
		$$
		where $M$ satisfies the hypothesis of  Theorem~\ref{th: atomic kian}. Then, by Theorem~\ref{th: atomic kian} each $F_{\tau}\in\H(\D)$
		and  $\nm{F_\tau}_{\Apq{p}{q}}\lesssim \nm{\set{a_{j,l}}}_{\ell^{p,q}}$. 
So, 
		$$
		\int_0^1 \left (\int_0^{2\pi} \abs{F_\tau^{(n)}(re^{i\t})}^s d\mu_r(\t) \right)^{\frac{t}{s}}d\nu(r) \lesssim C^t \nm{\set{a_{j,l}}}_{\ell^{p,q}}^t,\quad \tau\in[0,1],
		$$
where $C$ is that of \eqref{eq: carleson-type inequality}.
		Therefore,  by Fubini's theorem we obtain
		\begin{equation}\label{eq: kinchine 1}
			\int_0^1\int_0^1 \left (\int_0^{2\pi} \abs{F^{(n)}_\tau(re^{i\t})}^s d\mu_r(\t) \right)^{\frac{t}{s}}d\tau \,d\nu(r) \lesssim C^t \nm{\{a_{j,l}\}}_{\ell^{p,q}}^t.
		\end{equation}
		Now,  by 
Kinchine-Kahane-Kalton inequality (see \cite[Theorem 2.1]{kalton}), let $C(M,n)=M(M+1)\cdots(M+n-1)$
		\begin{equation}\label{eq: kinchine 2}
			\begin{split}
				&\int_0^1 \left (\int_0^{2\pi} \abs{F^{(n)}_\tau(re^{i\t})}^s d\mu_r(\t) \right)^{\frac{t}{s}}d\tau  \\ 
				& \asymp \int_0^1 \left (\int_0^{2\pi} \abs{\sum_{j,l}C(M,n)a_{j,l}\overline{z_{j,l}}^n r_{j,l}(\tau)\frac{(1-\abs{z_{j,l}})^{M-\frac{1}{p}}\omg(z_{j,l})^{-\frac{1}{q}}}{(1-\overline{z_{j,l}}re^{i\t})^{M+n}}}^s  d\mu_r(\t) \right )^{\frac{t}{s}}d\tau \\ &= \int_0^1  \nm{\sum_{j,l}C(M,n)a_{j,l}\overline{z_{j,l}}^nr_{j,l}(\tau)\frac{(1-\abs{z_{j,l}})^{M-\frac{1}{p}}\omg(z_{j,l})^{-\frac{1}{q}}}{(1-\overline{z_{j,l}}re^{i\cdot})^{M+n}}}^t_{L^s(\mu_r)} d\tau \\
				&\gtrsim \left (\int_0^1 \nm{\sum_{j,l}C(M,n)a_{j,l}\overline{z_{j,l}}^n r_{j,l}(\tau)\frac{(1-\abs{z_{j,l}})^{M-\frac{1}{p}}\omg(z_{j,l})^{-\frac{1}{q}}}{(1-\overline{z_{j,l}}re^{i\cdot})^{M+n}}}^s_{L^s(\mu_r)} d\tau \right )^{\frac{t}{s}} \\
				&\asymp \left (\int_0^1\int_0^{2\pi} \abs{F_\tau^{(n)}(re^{i\t})}^s d\mu_r(\t)d\tau  \right)^{\frac{t}{s}}.
			\end{split}
		\end{equation}
		
		Then, by \eqref{eq: kinchine 1} and \eqref{eq: kinchine 2}, it follows that
		\begin{equation}
			\begin{split}\label{eq:nec1}
				C^t \nm{\{a_{j,l}\}}_{\ell^{p,q}}^t &\gtrsim \int_0^1 \left (\int_0^1\int_0^{2\pi} \abs{F_\tau^{(n)}(re^{i\t})}^s d\mu_r(\t)d\tau  \right)^{\frac{t}{s}}d\nu(r) \\
				&= \sum_{j=1}^\infty \int_{r_{j-1}}^{r_j}\left (\int_0^1\int_0^{2\pi} \abs{F_\tau^{(n)}(re^{i\t})}^s d\mu_r(\t)d\tau  \right)^{\frac{t}{s}}d\nu(r).
			\end{split}
		\end{equation}
		Next, for every $0\leq r<1$ take $j\in\N$ such that $r_{j-1}\leq r <r_j$. Then,  by  Fubini's theorem and Kinchine's inequality (\cite[Appendix A]{Duren})
		\begin{equation}
			\begin{split}\label{eq:nec2}
				&\int_0^1\int_0^{2\pi} \abs{F_\tau^{(n)}(re^{i\t})}^s d\mu_r(\t)d\tau  \\ &\asymp 
				\int_0^{2\pi} \int_0^1 \abs{\sum_{k,l}C(m,n)a_{k,l}r_{k,l}\overline{z_{k,l}}^n(\tau)\frac{(1-\abs{z_{k,l}})^{M-\frac{1}{p}}\omg(z_{k,l})^{-\frac{1}{q}}}{(1-\overline{z_{k,l}}re^{i\t})^{M+n}}}^s d\tau \, d\mu_r(\t)   \\
				&\asymp \int_0^{2\pi} \left ( \sum_{k,l} \abs{a_{k,l}}^2\frac{(1-\abs{z_{k,l}}^2)^{2M-\frac{2}{p}}\omg(z_{k,l})^{-\frac{2}{q}}}{\abs{1-\overline{z_{k,l}}re^{i\t}}^{2M+2n}} \right )^{\frac{s}{2}} d\mu_r(\t) \\
				&\geq \int_0^{2\pi} \left ( \sum_{l=0}^{K^{j+2}-1} \abs{a_{j,l}}^2\frac{(1-\abs{z_{j,l}}^2)^{2M-\frac{2}{p}}\omg(z_{j,l})^{-\frac{2}{q}}}{\abs{1-\overline{z_{j,l}}re^{i\t}}^{2M+2n}} \right )^{\frac{s}{2}} d\mu_r(\t).
			\end{split}
		\end{equation}
The last step is based on the following inequalities:
\begin{equation}
\begin{split}\label{eq:inequalities}
    &\frac{1-\abs{z_{j,l}}^2}{\abs{1-\overline{z_{j,l}}re^{i\t}}}
    \gtrsim \chi_{I_{K^{j+2},l}}(\t),
    \quad j\in \N,\quad r_{j-1}\le r < r_j,\quad l=0,1,\ldots,K^{j+2}-1,
    \\
    &\omg(z_{j,l})\asymp \omg(r_j)
    \quad \text{and} \quad
    (1-\abs{z_{j,l}}^2)\asymp K^{-j},
    \quad j\in \N,\quad l=0,1,\ldots,K^{j+2}-1.
\end{split}
\end{equation}
Using \eqref{eq:nec1}, \eqref{eq:nec2} and \eqref{eq:inequalities} we get 
		\begin{equation*}
			\begin{split}
			&	C^t\nm{\{a_{j,l}\}}_{\ell^{p,q}}^t 
			\\	&\gtrsim \sum_{j=1}^\infty \int_{r_{j-1}}^{r_j}  \left (\int_0^{2\pi} \left (\sum_{l=0}^{K^{j+2}-1} \abs{a_{j,l}}^2 \chi_{I_{K^{j+2},l}}(\t) K^{j(2n+\frac{2}{p})}\omg(r_j)^{-\frac{2}{q}} \right )^{\frac{s}{2}} d\mu_r(\t)\right )^{\frac{t}{s}}d\nu(r)\\
				&\asymp \sum_{j=1}^{\infty}\int_{r_{j-1}}^{r_j}K^{jt(n+\frac{1}{p})}\omg(r_j)^{-\frac{t}{q}}\left (\sum_{l=0}^{K^{j+2}-1}|a_{j,l}|^s \mu_r(I_{K^{j+2},l})  \right )^{\frac{t}{s}} d\nu(r).
			\end{split}
		\end{equation*}
		Consequently (ii) holds and 
		$$
		\inf\{C>0: \text{ such that } \eqref{eq: carleson-type inequality} \text{ holds} \} \lesssim\inf\{C>0: \text{ such that } \eqref{eq: carleson-type discrete} \text{ holds} \} .
		$$ 
		$\mathbf{(ii) \implies (i)}$.
		Asumme now that (ii) holds. 
Then,  by  Lemma~\ref{lemma: descomposicion norma bloques}

		\begin{equation*}
			\begin{split}
			&	\int_0^1 \left (\int_0^{2\pi}\abs{f^{(n)}(re^{i\t})}^s d\mu_r(\t) \right )^{\frac{t}{s}}d\nu(r)
\\ &= \sum_{j=1}^\infty \int_{r_{j-1}}^{r_j} \left (  \sum_{l=0}^{K^{j+2}-1}\int_{I_{K^{j+2},l}} \abs{f^{(n)}(r e^{i\t})}^s d\mu_r(\t) \right)^{\frac{t}{s}}d\nu(r)
\\ & \lesssim \sum_{j=1}^\infty \int_{r_{j-1}}^{r_j} \left (  \sum_{l=0}^{K^{j+2}-1}\abs{f^{(n)}_{j,l}}^s \mu_r(I_{K^{j+2},l}) \right)^{\frac{t}{s}}d\nu(r)\\
				&= \sum_{j=1}^\infty \int_{r_{j-1}}^{r_j}K^{jt(n+\frac{1}{p})}\omg(r_j)^{-\frac{t}{q}} \left (  \sum_{l=0}^{K^{j+2}-1}\left(\abs{f^{(n)}_{j,l}}K^{-j(n+\frac{1}{p})}\omg(r_j)^{\frac{1}{q}}\right)^s \mu_r(I_{K^{j+2},l}) \right)^{\frac{t}{s}}d\nu(r) \\
				&\leq C^t \nm{\left\{\abs{f^{(n)}_{j,l}}K^{-j(n+\frac{1}{p})}\omg(r_j)^{\frac{1}{q}}\right\}}_{\ell^{p,q}}^t \lesssim C^t \nm{f}_{\Apq{p}{q}}^t,
			\end{split}
		\end{equation*}
where $C$  comes from \eqref{eq: carleson-type discrete}. 
		So (i) holds and 
		$$\inf\{C>0: \text{ such that } \eqref{eq: carleson-type inequality} \text{ holds} \} \gtrsim \inf\{C>0: \text{ such that } \eqref{eq: carleson-type discrete} \text{ holds} \}.$$
This finishes the proof.
	\end{Prf}
\medskip
	\par Prior to dealing with the particular case $d\mu_r(\t)=\abs{G(re^{i\t})}^s\frac{d\theta}{2\pi}$ and  $\nu\in\DDD$, let us recall the following concepts and results. 
	Let $0<p,q,s,t<\infty$, the sequence $\{b_{j,l}\}$ multiplies $\ell^{p,q}$ into $\ell^{s,t}$ if there exists a constant $C>0$ such that
	$$
	\nm{\{a_{j,l} b_{j,l}\}}_{\ell^{s,t}}\leq C \nm{\{a_{j,l}\}}_{\ell^{p,q}},\quad \text{for each}\, \{a_{j,l}\}\in \ell^{p,q}.
	$$
	We denote by $[\ell^{p,q},\ell^{s,t}]$ the space of multipliers from $\ell^{p,q}$ to $\ell^{s,t}$ and 
	$$
	\nm{\{b_{j,l}\}}_{[\ell^{p,q},\ell^{s,t}]} = \inf\set{ C>0: \nm{\{a_{j,l} b_{j,l}\}}_{\ell^{s,t}}\leq C \nm{\{a_{j,l}\}}_{\ell^{p,q}},\quad \{a_{j,l}\}\in \ell^{p,q}}.
	$$
	The following description of $[\ell^{p,q},\ell^{s,t}]$ will be useful for our purpose.
	\begin{letterlemma}\cite[Lemma 7]{luecking}\label{lemma: multiplicadores luecking}
		Let $0<p,q,s,t<\infty$, then $[\ell^{p,q},\ell^{s,t}]=
		\ell^{s\left (\frac{p}{s} \right)',t\left (\frac{q}{t} \right)'}$. Moreover,
		$$
		\nm{\{b_{j,l}\}}_{[\ell^{p,q},\ell^{s,t}]} \asymp \nm{\{b_{j,l}\}}_{\ell^{s\left (\frac{p}{s} \right)',t\left (\frac{q}{t} \right)'}}.
		$$
	\end{letterlemma}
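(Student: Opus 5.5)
The plan is to reduce the mixed-norm statement to the classical one-level fact $[\ell^p,\ell^s]=\ell^{s(p/s)'}$, with comparable norms, and then apply that fact twice: once in the inner index $l$ and once in the outer index $j$. Throughout I write $\tilde p=s(p/s)'$ and $\tilde q=t(q/t)'$, with the convention that $\tilde p=\infty$ when $p\le s$ and $\tilde q=\infty$ when $q\le t$. When $s<p$ one has $\frac1{\tilde p}=\frac1s-\frac1p$, and similarly for $\tilde q$.

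\emph{Step 1: the one-level lemma.} I will prove $\nm{\{a_lb_l\}}_{\ell^s}\le \nm{\{b_l\}}_{\ell^{\tilde p}}\nm{\{a_l\}}_{\ell^p}$ and also the reverse inequality up to a constant.
\begin{itemize}
\item If $p\le s$, the upper bound follows from $\nm{\cdot}_{\ell^s}\le\nm{\cdot}_{\ell^p}$ together with $|a_lb_l|\le \nm{b}_{\ell^\infty}|a_l|$. Testing on unit vectors $a=e_{l_0}$ gives $|b_{l_0}|\le \nm{b}_{[\ell^p,\ell^s]}$, which is the reverse bound.
\item If $s<p$, I apply Hölder to $\sum|a_l|^s|b_l|^s$ with exponents $p/s$ and $(p/s)'$. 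For the converse I first restrict to finitely many indices, so that all norms are finite, and then take $a_l=|b_l|^{\tilde p/p}$. With this choice $|a_lb_l|^s=|b_l|^{\tilde p}$ and $\nm{a}_{\ell^p}^p=\nm{b}_{\ell^{\tilde p}}^{\tilde p}$. It follows that $\nm{b}_{\ell^{\tilde p}}^{\tilde p/s}\le \nm{b}_{[\ell^p,\ell^s]}\nm{b}_{\ell^{\tilde p}}^{\tilde p/p}$, and hence $\nm{b}_{\ell^{\tilde p}}\le\nm{b}_{[\ell^p,\ell^s]}$.
\end{itemize}
Monotone convergence then removes the truncation. The outcome is a near-extremal unit vector $\alpha$ in $\ell^p$, that is $\nm{\alpha}_{\ell^p}=1$, with $\nm{\{\alpha_lb_l\}}_{\ell^s}\ge \frac12\nm{b}_{\ell^{\tilde p}}$.

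\emph{Step 2: sufficiency.} Let $b\in \ell^{\tilde p,\tilde q}$ and set $\beta_j=\nm{\{b_{j,l}\}_l}_{\ell^{\tilde p}}$. Step 1 in the index $l$ gives $\nm{\{a_{j,l}b_{j,l}\}_l}_{\ell^s}\le \beta_j\,\nm{\{a_{j,l}\}_l}_{\ell^p}$. Applying Step 1 again, now to the sequences $\{\beta_j\}_j$ and $\{\nm{a_{j,\cdot}}_{\ell^p}\}_j$ with exponents $q,t$, yields
$$\nm{\{a_{j,l}b_{j,l}\}}_{\ell^{s,t}}\le \nm{\{\beta_j\}}_{\ell^{\tilde q}}\nm{\{a_{j,l}\}}_{\ell^{p,q}}=\nm{b}_{\ell^{\tilde p,\tilde q}}\nm{a}_{\ell^{p,q}}.$$

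\emph{Step 3: necessity.} Suppose $b$ is a multiplier with norm $C$. For each $j$ with $\beta_j>0$, I take the near-extremal inner vector $\alpha_{j,\cdot}$ from Step 1. If $\beta_j=\infty$, I work with a truncation and a large finite substitute instead. For an arbitrary finitely supported $\{c_j\}$, I put $a_{j,l}=c_j\alpha_{j,l}$. Then $\nm{a}_{\ell^{p,q}}=\nm{c}_{\ell^q}$, while $\nm{ab}_{\ell^{s,t}}\ge\frac12\nm{\{c_j\beta_j\}}_{\ell^t}$. Hence $\{\beta_j\}$ multiplies $\ell^q$ into $\ell^t$ with norm at most $2C$. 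The one-level converse then gives $\nm{\{\beta_j\}}_{\ell^{\tilde q}}\le 2C$, that is, $\nm{b}_{\ell^{\tilde p,\tilde q}}\lesssim C$.

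The main obstacle is the bookkeeping when $\beta_j$ or the norms could be infinite, together with keeping the endpoint conventions $(p/s)'=\infty$ and $(q/t)'=\infty$ consistent. Truncating to finitely many $(j,l)$ and passing to the limit by monotone convergence should handle this; the remaining steps are direct Hölder estimates.
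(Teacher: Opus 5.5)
Your proof is correct. The paper itself gives no argument for this statement: it quotes it from Luecking \cite[Lemma 7]{luecking} and uses it as a black box. So there is no in-paper route to compare with, and your proof supplies a self-contained justification.

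The structure is sound:
\begin{itemize}
\item You first establish the scalar fact $[\ell^p,\ell^s]=\ell^{s(p/s)'}$, using H\"older's inequality in one direction and the extremal choice $a_l=|b_l|^{\tilde p/p}$ on finite sets in the other.
\item Sufficiency then follows by applying this fact in $l$ and then in $j$.
\item Necessity follows by building row-wise near-extremizers $\alpha_{j,\cdot}$ and testing with $a_{j,l}=c_j\alpha_{j,l}$.
\end{itemize}
The exponent bookkeeping also checks out: $s\tilde p/p+s=\tilde p$ and $\frac1s-\frac1p=\frac1{\tilde p}$.

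Three small remarks:
\begin{itemize}
\item The case $\beta_j=\infty$ that worries you cannot occur. Testing the multiplier inequality on sequences supported in a single row $j$ shows that row $j$ is a multiplier $\ell^p\to\ell^s$ with norm at most $C$, so $\beta_j\le C$ by Step 1. The truncation device is therefore only needed inside Step 1.
\item Both one-level converses use only finitely supported test sequences. Hence restricting $\{c_j\}$ to finitely supported sequences loses nothing.
\item If you replace $\frac12$ by $1-\varepsilon$ in the near-extremizers and let $\varepsilon\to0$, the argument gives equality of the multiplier norm and the $\ell^{s(p/s)',t(q/t)'}$ norm, not just comparability.
\end{itemize}
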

	
	\begin{proposition}\label{propo: caracterización G nu}
		Let $0<p,q,s,t<\infty$, $G\in\H(\D)$, $\om,\nu \in \DDD$ and $K=\max\{ K(\om), K(\nu)\}$ where  $K(\om) \in \N \setminus \{1\}$, $ K(\nu) \in \N \setminus \{1\}$ and both are  such that \eqref{eq: def Dcheck} holds.  Then, the following conditions  are equivalent:
		\begin{itemize}
			\item[(i)] There exists a constant $C>0$ such that 
			\begin{equation}\label{eq: carleson-type G nu}
				\left (\int_0^1 \left (\int_0^{2\pi}\abs{f^{(n)}(re^{i\t})}^s \abs{G(re^{i\t})}^s\frac{d\theta}{2\pi} \right)^{\frac{t}{s}}\nu(r)dr \right)^\frac{1}{t} \leq C \nm{f}_{\Apq{p}{q}};
			\end{equation}
			\item[(ii)] 
$\left \{K^{j(n+\frac{1}{p})}\nug(r_j)^{\frac{1}{t}}\omg(r_j)^{-\frac{1}{q}} (G_{[s]}(r_{j-1}))_{K^{j+2},l} \right \}\in \ell^{s\left (\frac{p}{s}\right )',t\left (\frac{q}{t}\right )'}.$
		\end{itemize}
		Moreover,
        $$
		 \inf\{C>0: \text{ such that } \eqref{eq: carleson-type G nu} \text{ holds}\} 
        \asymp \nm{\left \{K^{j(n+\frac{1}{p})}\nug(r_j)^{\frac{1}{t}}\omg(r_j)^{-\frac{1}{q}} (G_{[s]}(r_{j-1}))_{K^{j+2},l} \right \}}_{\ell^{s\left (\frac{p}{s}\right )',t\left (\frac{q}{t}\right )'}}.
        $$
	\end{proposition}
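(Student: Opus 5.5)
The plan is to reduce Proposition~\ref{propo: caracterización G nu} to Proposition~\ref{propo: carleson type} and Lemma~\ref{lemma: multiplicadores luecking}. Take $d\mu_r(\t)=\abs{G(re^{i\t})}^s\frac{d\theta}{2\pi}$ and $d\nu=\nu(r)dr$. Then \eqref{eq: carleson-type G nu} is exactly \eqref{eq: carleson-type inequality}, and $\mu_r(I_{K^{j+2},l})=(G_{[s]}(r))_{K^{j+2},l}^s$. Since $K\ge K(\om)$, a doubling constant valid for $K(\om)$ should still work for $K$ (iterating \eqref{eq: def Dcheck} and using monotonicity of $\omg$). So Proposition~\ref{propo: carleson type} shows that (i) is equivalent to the discrete inequality
$$
\sum_{j=1}^\infty K^{jt(n+\frac1p)}\omg(r_j)^{-\frac tq}\int_{r_{j-1}}^{r_j}\Big(\sum_{l}|a_{j,l}|^s (G_{[s]}(r))^s_{K^{j+2},l}\Big)^{\frac ts}\nu(r)\,dr\lesssim C^t\nm{a_{j,l}}^t_{\ell^{p,q}},
$$
with comparable constants.

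The key step is to replace the $r$-dependent quantity $(G_{[s]}(r))_{K^{j+2},l}$, for $r\in[r_{j-1},r_j)$, by the fixed quantity $(G_{[s]}(r_{j-1}))_{K^{j+2},l}$. With that done, the integral over $[r_{j-1},r_j)$ produces the factor $\int_{r_{j-1}}^{r_j}\nu\asymp\nug(r_j)$ by Lemma~\ref{lemma: weight integral radius} applied to $\nu$. The discrete inequality then becomes $\nm{\{a_{j,l}b_{j,l}\}}_{\ell^{s,t}}\lesssim\nm{a}_{\ell^{p,q}}$, where
$$
b_{j,l}=K^{j(n+\frac1p)}\nug(r_j)^{\frac1t}\omg(r_j)^{-\frac1q}(G_{[s]}(r_{j-1}))_{K^{j+2},l}.
$$
Lemma~\ref{lemma: multiplicadores luecking} identifies this with (ii) and gives the norm equivalence.

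For the upper direction, (ii)$\Rightarrow$(i), I will bound $\int_{I_{K^{j+2},l}}|f^{(n)}G|^s(re^{i\t})\,d\t$ by $\sup_{Q_{j,l}}|f^{(n)}|^s$ times the local $L^s$ mean of $G$. Because $M_s(r,G)$ increases in $r$ only globally, not on arcs, I expect to need a slight enlargement. Specifically, I will control $\int_{I_{K^{j+2},l}}|G(re^{i\t})|^s\,d\t$ for $r\in[r_{j-1},r_j)$ by the mean of $|G|^s$ over neighbouring arcs at radius $r_{j+1}$ or $r_{j-2}$, using subharmonicity on discs of radius $\asymp K^{-j}$. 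This produces neighbour-averaged and index-shifted sequences, and their $\ell^{s(p/s)',t(q/t)'}$ norms are comparable to the original one: finite overlap handles the neighbours, and $\omg,\nug$ doubling together with $K^{j}$ changing by a bounded factor handle the shift in $j$. If the shift is to $r_{j+1}$ instead of $r_{j-1}$, I will adjust by applying the estimate with index $j-2$. The alternative is to redo Lemma~\ref{lemma: descomposicion norma bloques} with enlarged boxes.

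For the lower direction, (i)$\Rightarrow$(ii), I need the reverse estimate: $(G_{[s]}(r_{j-1}))_{K^{j+2},l}^s$ should be dominated by an average over $r\in[r_{j-1},r_j)$ and neighbouring arcs of $\int|G(re^{i\t})|^s\,d\t$. This again comes from subharmonicity of $|G|^s$ on discs centered at radius $r_{j-1}$, but such discs also reach the previous annulus. So I will use the discrete inequality for annuli $j-1$ and $j$ together, with the coefficients $a_{j,l}$ spread to neighbouring boxes, which costs only constants.

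The main obstacle is this bookkeeping: the radius $r_{j-1}$ sits at the edge of the annulus, so the local mean-value comparisons move mass between adjacent annuli and adjacent arcs. The choice of partition $K^{j+2}$, finer than the box size $K^{-j}$, guarantees that a disc of radius $\asymp K^{-j}$ meets only boundedly many arcs. I then need to check that these shifts preserve the mixed multiplier norm up to constants.
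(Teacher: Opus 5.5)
Your overall frame matches the paper's: specialise Proposition~\ref{propo: carleson type} to $d\mu_r=|G(re^{i\theta})|^s\frac{d\theta}{2\pi}$, factor out $\int_{r_{j-1}}^{r_j}\nu\asymp\nug(r_j)$, and finish with Lemma~\ref{lemma: multiplicadores luecking}. The gap is in the step you call key. It cannot be done by the local comparisons you describe, because once you pass to the discrete inequality of Proposition~\ref{propo: carleson type}, the analyticity that allows a change of radius is lost.

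\textbf{Direction (ii)$\Rightarrow$(i).} There is no bound for $\int_{I_{K^{j+2},l}}|G(re^{i\theta})|^s\,d\theta$ by $|G|^s$ on boundedly many neighbouring arcs at another fixed radius $R$. Two disjoint arcs on different circles do not separate the plane, so by Mergelyan's theorem a polynomial can be close to $1$ on one and close to $0$ on the other. Subharmonicity on discs only gives area integrals over enlarged boxes, that is, averages over a range of radii, never a single radius. The genuine tool for changing radius, the Poisson integral, gives weights over all arcs decaying only like $|l-m|^{-2}$, not finite overlap.

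\textbf{Direction (i)$\Rightarrow$(ii).} Here the problem is worse.
\begin{itemize}
\item Discs centred on $|z|=r_{j-1}$ produce a Lebesgue average $\frac1\delta\int_{r_{j-1}-\delta}^{r_{j-1}+\delta}(\cdots)\,d\rho$, while the discrete inequality only controls $\int(\cdots)^{t/s}\nu(\rho)\,d\rho$.
\item A weight in $\DDD$ is controlled only through $\nug$. It may vanish identically on a radial neighbourhood of each circle $|z|=r_{j-1}$ of length comparable to $1-r_{j-1}$ (take $\nu$ supported near the midpoints of the annuli).
\item For $t<s$ you cannot move the power $t/s$ inside a radial average anyway, since Jensen goes the wrong way.
\end{itemize}
What you need is a lower bound valid for every $r\in[r_{j-1},r_j)$. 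Even the Poisson bound, which is pointwise in $r$, when used in the obvious way spreads a unit mass $a_{j,l}=\delta_{l,l_0}$ into a sequence decaying like $|l_0-m|^{-2/s}$. That sequence is not in $\ell^p$ when $s\ge 2p$.

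\textbf{The missing idea.} Change the radius \emph{before} localising to arcs, using monotonicity of full-circle integral means of the \emph{analytic products} $f^{(n)}G$ and $F_\tau^{(n)}G$. You only considered $M_s(r,G)$.

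For (ii)$\Rightarrow$(i):
\begin{itemize}
\item For $r\in[r_{j-1},r_j)$ one has $M_s(r,f^{(n)}G)\le M_s(r_j,f^{(n)}G)$.
\item Since $r_je^{i\theta}\in Q_{j+1,l}$ for $\theta\in I_{K^{j+3},l}$, the $j$-th piece is therefore $\lesssim\nug(r_j)\,(\sum_{l<K^{j+3}}|f^{(n)}_{j+1,l}|^s(G_{[s]}(r_j))^s_{K^{j+3},l})^{t/s}$.
\item Reindexing $j+1\to j$, with $\nug(r_{j-1})\asymp\nug(r_j)$, gives exactly the sequence of (ii) paired with $\{|f^{(n)}_{j,l}|\}$.
\item Lemmas~\ref{lemma: multiplicadores luecking} and~\ref{lemma: descomposicion norma bloques} then finish.
\end{itemize}

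For (i)$\Rightarrow$(ii), use the \emph{proof} of Proposition~\ref{propo: carleson type} rather than its statement:
\begin{itemize}
\item In \eqref{eq:nec1} the inner quantity $\int_0^1M_s^s(r,F_\tau^{(n)}G)\,d\tau$ is nondecreasing in $r$, so on $[r_{j-1},r_j)$ it dominates its value at $r_{j-1}$.
\item The factor $\int_{r_{j-1}}^{r_j}\nu\asymp\nug(r_j)$ then comes out with no Jensen or weight issue.
\item Applying \eqref{eq:nec2} and \eqref{eq:inequalities} at $r=r_{j-1}$ yields the multiplier inequality for the sequence in (ii).
\end{itemize}
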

	\begin{proof}
    In order to prove this equivalence, we will use the proof (and we keep the same notation) of Proposition~\ref{propo: carleson type} in the particular case $d\mu_r(\t)=\abs{G(re^{i\t})}^s\frac{d\theta}{2\pi}$ and $d\nu(r)=\nu(r)dr$. Firstly, assume that $(i)$ holds. 
    Observe that by  Lemma~\ref{lemma: weight integral radius}
		\begin{equation}\label{eq: wgorro equiv w int}
			\int_{r_{j-1}}^{r_j} \nu(r)dr \asymp \nug(r_j),\quad j\in \N.
		\end{equation}
Therefore, by
\eqref{eq:nec1}
		\begin{equation*}
			\begin{split}
				C^t \nm{\{a_{j,l}\}}_{\ell^{p,q}}^t &\gtrsim \int_0^1 \left (\int_0^1\int_0^{2\pi} \abs{F_\tau^{(n)}(re^{i\t})}^s \abs{G(re^{i\t})}^s \frac{d\theta}{2\pi} d\tau  \right)^{\frac{t}{s}}\nu(r)dr \\
				&= \sum_{j=1}^\infty \int_{r_{j-1}}^{r_j}\left (\int_0^1 M_s^s(r,F_\tau^{(n)}\cdot G) d\tau  \right)^{\frac{t}{s}}\nu(r)dr \\
				&\gtrsim \sum_{j=1}^\infty \nug(r_j) \left ( \int_0^1  M_s^s(r_{j-1},F_\tau^{(n)}\cdot G) d\tau  \right )^{\frac{t}{s}},
			\end{split}
		\end{equation*}
		where $C$ comes from \eqref{eq: carleson-type G nu}.
		Next, joining the above inequality with \eqref{eq:nec2} (applied to $r=r_{j-1}$) and \eqref{eq:inequalities}, we get 
		\begin{equation}
			\begin{split}
				& C^t \nm{\{a_{j,l}\}}_{\ell^{p,q}}^t \\ &\gtrsim \sum_{j=1}^\infty \nug(r_j)   \left (\int_0^{2\pi} \left (\sum_{l=0}^{K^{j+2}-1} \abs{a_{j,l}}^2 \chi_{I_{K^{j+2},l}}(\t) K^{j(2n+\frac{2}{p})}\omg(r_j)^{-\frac{2}{q}} \right )^{\frac{s}{2}} \abs{G(r_{j-1}e^{i\t})}^s\frac{d\theta}{2\pi} \right )^{\frac{t}{s}}\\
				&\asymp \sum_{j=1}^{\infty} K^{jt (n+\frac{1}{p} )}\nug(r_j)\omg(r_j)^{-\frac{t}{q}}\left (\sum_{l=0}^{K^{j+2}-1}|a_{j,l}|^s (G_{[s]}(r_{j-1}))_{K^{j+2},l}^s  \right )^{\frac{t}{s}}, \quad \{a_{j,l}\}\in \ell^{p,q},
			\end{split}
		\end{equation}
		it means $\left \{K^{j(n+\frac{1}{p})}\nug(r_j)^{\frac{1}{t}}\omg(r_j)^{-\frac{1}{q}} (G_{[s]}(r_{j-1}))_{K^{j+2},l} \right \}\in[\ell^{p,q},\ell^{s,t}]$. Therefore, (ii) holds and the inequality
		$$
		\nm{\left \{K^{j(n+\frac{1}{p})}\nug(r_j)^{\frac{1}{t}}\omg(r_j)^{-\frac{1}{q}} (G_{[s]}(r_{j-1}))_{K^{j+2},l} \right \}}_{\ell^{s\left (\frac{p}{s}\right )',t\left (\frac{q}{t}\right )'}}\lesssim \inf\{C>0: \text{ such that } \eqref{eq: carleson-type G nu} \text{ holds}\},
		$$
		follows now from Lemma~\ref{lemma: multiplicadores luecking}. 
		
		Asumme now that (ii) holds.  
Arguing as in the proof of (ii)$\Rightarrow$(i) in Proposition~\ref{propo: carleson type}, 
 and using \eqref{eq: wgorro equiv w int},  Lemma~\ref{lemma: multiplicadores luecking} and Lemma~\ref{lemma: descomposicion norma bloques}
		\begin{equation*}
			\begin{split}
    &\int_0^1 \left (\int_0^{2\pi}\abs{f^{(n)}(re^{i\t})}^s \abs{G(re^{i\t})}^s \frac{d\theta}{2\pi}\right )^{\frac{t}{s}}\nu(r)dr \\ 
    &\leq \sum_{j=1}^\infty \nug(r_j) \left (  \sum_{l=0}^{K^{j+3}-1}\int_{I_{{K^{j+3}},l}} \abs{f^{(n)}(r_{j} e^{i\t})}^s \abs{G(r_{j}e^{i\t})}^s \frac{d\theta}{2\pi} \right)^{\frac{t}{s}} \\
    &\lesssim \sum_{j=1}^\infty \nug(r_j)   \left (  \sum_{l=0}^{K^{j+3}-1} \abs{f^{(n)}_{j+1,l}}^s (G_{[s]}(r_j))_{K^{j+3},l}^s \right)^{\frac{t}{s}}\\			&\lesssim \sum_{j=1}^\infty \nug(r_j) \left (  \sum_{l=0}^{K^{j+2}-1}  \abs{f^{(n)}_{j,l}}^s (G_{[s]}(r_{j-1}))_{K^{j+2},l}^s \right )^{\frac{t}{s}} \\
	&= \sum_{j=1}^\infty K^{jt (n+\frac{1}{p} )}\nug(r_j)\omg(r_j)^{-\frac{t}{q}}  \left (  \sum_{l=0}^{K^{j+2}-1} \left (\abs{f^{(n)}_{j,l}}K^{-j(n+\frac{1}{p})}\omg(r_j)^{\frac{1}{q}}\right )^s (G_{[s]}(r_{j-1}))_{K^{j+2},l}^s \right)^{\frac{t}{s}}\\
	&\lesssim \nm{\left \{K^{j(n+\frac{1}{p})}\nug(r_j)^{\frac{1}{t}}\omg(r_j)^{-\frac{1}{q}} (G_{[s]}(r_{j-1}))_{K^{j+2},l} \right \}}_{\ell^{s\left (\frac{p}{s}\right )',t\left (\frac{q}{t}\right )'}}^t \nm{\left\{\abs{f^{(n)}_{j,l}}K^{-j(n+\frac{1}{p})}\omg(r_j)^{\frac{1}{q}}\right\}}_{\ell^{p,q}}^t \\
	&\lesssim \nm{\left \{K^{j(n+\frac{1}{p})}\nug(r_j)^{\frac{1}{t}}\omg(r_j)^{-\frac{1}{q}} (G_{[s]}(r_{j-1}))_{K^{j+2},l} \right \}}_{\ell^{s\left (\frac{p}{s}\right )',t\left (\frac{q}{t}\right )'}}^t \nm{f}_{\Apq{p}{q}}^t,
			\end{split}
		\end{equation*}
		so (ii) holds and 
		$$ \inf\{C>0: \text{ such that } \eqref{eq: carleson-type G nu} \text{ holds}\} \lesssim \nm{\left \{K^{j(n+\frac{1}{p})}\nug(r_j)^{\frac{1}{t}}\omg(r_j)^{-\frac{1}{q}} (G_{[s]}(r_{j-1}))_{K^{j+2},l} \right \}}_{\ell^{s\left (\frac{p}{s}\right )',t\left (\frac{q}{t}\right )'}}.$$
	\end{proof}
Next, the equivalence (i)$\Leftrightarrow$(ii) of Theorem~\ref{th2: charact Tg} is obtained as a byproduct of Proposition~\ref{propo: caracterización G nu}. 

    \begin{corollary}\label{coro: discrete caract Tg}
        Let $0<p,q,s,t<\infty$ , $g\in\H(\D)$, $\om \in \DDD$ and $K=K(\om) \in \N \setminus \{1\}$ such that \eqref{eq: def Dcheck} holds.  Then, the following conditions  are equivalent:
		\begin{itemize}
			\item[(i)] $T_g:\Apq{p}{q}\to \Apq{s}{t}$ is bounded;
            \item[(ii)] 
 $\left \{K^{j(\frac{1}{p}-1)}\omg(r_j)^{\frac{1}{t}-\frac{1}{q}}  \left (g'_{[s]}(r_{j-1}) \right)_{K^{j+2},l} \right \}\in \ell^{s\left (\frac{p}{s}\right )',t\left (\frac{q}{t}\right )'}.$
		\end{itemize}
		Moreover,
		$$
		\nm{T_g}_{\Apq{p}{q}\to\Apq{s}{t}} \asymp \nm{\left \{K^{j(\frac{1}{p}-1)}\omg(r_j)^{\frac{1}{t}-\frac{1}{q}}  \left (g'_{[s]}(r_{j-1}) \right)_{K^{j+2},l} \right \}}_{\ell^{s\left (\frac{p}{s}\right )',t\left (\frac{q}{t}\right )'}}.
		$$
    \end{corollary}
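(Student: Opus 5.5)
The plan is to reduce Corollary~\ref{coro: discrete caract Tg} to Proposition~\ref{propo: caracterización G nu} with $n=0$, $G=g'$, the exponents $s,t$, and an appropriate doubling weight $\nu$. Since $(T_gf)'=fg'$ and $T_gf(0)=0$, the Littlewood--Paley formula \eqref{eq: L-P estimate} in $\Apq{s}{t}$ (valid because $\om\in\DDD$) gives
$$
\nm{T_gf}_{\Apq{s}{t}}^t\asymp \int_0^1 M_s^t(r,fg')(1-r)^{t}\om(r)\,dr .
$$
So boundedness of $T_g$ is equivalent to \eqref{eq: carleson-type G nu} with $n=0$, $G=g'$ and $\nu(r)=\om_{[t]}(r)=(1-r)^t\om(r)$, and the two norms are comparable.

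The first step is to note that $\om_{[t]}\in\DDD$; this is recalled after Theorem~\ref{th2: charact SgMg}. Next I need an admissible $K$ for $\nu$. If $\om$ satisfies \eqref{eq: def Dcheck} with $K$, then so does $\nu$ with the same $K$. Indeed, $\widehat{\nu}(r)\asymp(1-r)^t\omg(r)$ for doubling $\om$: the upper bound is trivial, and the lower bound comes from Lemma~\ref{lemma: weight integral radius}, since on $[r,1-\frac{1-r}{K}]$ we have $(1-s)^t\ge K^{-t}(1-r)^t$. Alternatively, one can check directly that $\widehat{\nu}(r)\ge \int_r^{1-(1-r)/K}\nu\gtrsim (1-r)^t\omg(1-\frac{1-r}{K})$ and iterate. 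To keep the choice $K=K(\om)$ as in the statement, I would rather rerun the proof of Proposition~\ref{propo: caracterización G nu} with this $K$. The only property of $\nu$ that proof uses is \eqref{eq: wgorro equiv w int}, namely $\int_{r_{j-1}}^{r_j}\nu\asymp\widehat{\nu}(r_j)$. That holds here because
$$
\int_{r_{j-1}}^{r_j}(1-r)^t\om(r)\,dr\asymp K^{-jt}\int_{r_{j-1}}^{r_j}\om\asymp K^{-jt}\omg(r_j)
$$
by Lemma~\ref{lemma: weight integral radius}, and because $\widehat{\nu}(r_j)\asymp K^{-jt}\omg(r_j)$. (The doubling of $\nu$ is also used in the upper-bound step, when passing from $r$ to $r_j$, but only through this comparability.)

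With this in hand, the sequence in Proposition~\ref{propo: caracterización G nu}(ii) becomes
$$
K^{j/p}\,\widehat{\nu}(r_j)^{1/t}\,\omg(r_j)^{-1/q}\,(g'_{[s]}(r_{j-1}))_{K^{j+2},l}\asymp K^{j(\frac1p-1)}\omg(r_j)^{\frac1t-\frac1q}(g'_{[s]}(r_{j-1}))_{K^{j+2},l}.
$$
Since the comparability constants are uniform in $j,l$ and the $\ell^{s(p/s)',t(q/t)'}$ quasi-norm is monotone, the two conditions are equivalent and the norms are comparable. This gives both (i)$\Leftrightarrow$(ii) and the norm estimate.

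The main obstacle is the bookkeeping for $K$. Proposition~\ref{propo: caracterización G nu} is stated with $K=\max\{K(\om),K(\nu)\}$, and the corollary uses $K(\om)$. I plan to handle this by showing that $K(\om)$ works for $\nu=\om_{[t]}$ in \eqref{eq: def Dcheck}. From $\widehat\nu(r)\asymp(1-r)^t\omg(r)$, one gets $\widehat\nu(r)/\widehat\nu(1-\frac{1-r}{K})\asymp K^t\,\omg(r)/\omg(1-\frac{1-r}{K})\ge cK^tC$, but the comparability constants may spoil the strict inequality $>1$. If so, I will fall back on the observation above that the proof only needs \eqref{eq: wgorro equiv w int} for the chosen $K$, which has already been verified directly.
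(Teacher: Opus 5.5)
Your proposal is correct and follows the paper's proof: you reduce via \eqref{eq: L-P estimate} to the characterization of \eqref{eq: carleson-type G nu} with $n=0$, $G=g'$, $\nu=\om_{[t]}$, and you use $\widehat{\om_{[t]}}(r_j)\asymp K^{-jt}\omg(r_j)$ to identify the two sequences, a step the paper leaves implicit. Your worry about $K$ is unnecessary: with $r'=1-\frac{1-r}{K}$ one has directly $\widehat{\om_{[t]}}(r)-\widehat{\om_{[t]}}(r')\ge (1-r')^t\bigl(\omg(r)-\omg(r')\bigr)\ge (C-1)(1-r')^t\omg(r')\ge (C-1)\widehat{\om_{[t]}}(r')$, so $K(\om)$ satisfies \eqref{eq: def Dcheck} for $\om_{[t]}$ with the same constant $C$, which is exactly what the paper asserts.
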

    \begin{proof}
    By \eqref{eq: L-P estimate} (i) is equivalent to \eqref{eq: carleson-type G nu} with $n=0$, $G=g'$ and $\nu(r)=\om_{[t]}(r)=\om(r)(1-r)^t$, and 
    $$
    \nm{T_g}_{\Apq{p}{q}\to\Apq{s}{t}}\asymp \inf\{C>0: \text{ such that } \eqref{eq: carleson-type G nu} \text{ holds}\}.
    $$
   Now, we observe that for $t>0$ and $\om\in\DDD$, $\om_{[t]}(r)=\om(r)(1-r)^t \in \DDD$, see \cite{PelRat,PelRosa}. Moreover, if $K=K(\om), \in \N \setminus \{1\}$  is such that \eqref{eq: def Dcheck} holds for $\omega$,
\eqref{eq: def Dcheck} also  holds for $\om_{[t]}$. Consequently, 
 the result follows from Proposition~\ref{propo: caracterización G nu}.
    \end{proof}
Taking $n=0$, $G=g$ and $\nu=\om$ in the case of $M_g$ and bearing in mind that the boundedness of $S_g:\Apq{p}{q}\to \Apq{s}{t}$ is equivalent to the boundedness of $M_g: A^{p,q}_{\om_{[q]}}\to  A^{s,t}_{\om_{[t]}}$, the proof of Corollary~\ref{coro: discrete caract Tg} can be mimicked to get the a discrete description of the symbols $g$ such that $L_g:\Apq{p}{q}\to \Apq{s}{t}$, $L_g\in \{M_g, S_g\}$, is bounded.
    \begin{corollary}\label{coro: discrete caract Sg Mg}
        Let $0<p,q,s,t<\infty$ , $g\in\H(\D)$, $\om \in \DDD$ and $K=K(\om)\in \N \setminus \{1\}$ such that \eqref{eq: def Dcheck} holds.  Then, the following conditions  are equivalent:
		\begin{itemize}
			\item[(i)] $S_g:\Apq{p}{q}\to \Apq{s}{t}$ is bounded;
            \item[(ii)] $M_g:\Apq{p}{q}\to \Apq{s}{t}$ is bounded;
            \item[(iii)] 
 $\left \{K^{\frac{j}{p}}\omg(r_j)^{\frac{1}{t}-\frac{1}{q}}  \left (g_{[s]}(r_{j-1}) \right)_{K^{j+2},l} \right \}\in \ell^{s\left (\frac{p}{s}\right )',t\left (\frac{q}{t}\right )'}.$
		\end{itemize}
		Moreover,
		$$
		\nm{S_g}_{\Apq{p}{q}\to\Apq{s}{t}} \asymp \nm{M_g}_{\Apq{p}{q}\to\Apq{s}{t}} \asymp \nm{\left \{K^{\frac{j}{p}}\omg(r_j)^{\frac{1}{t}-\frac{1}{q}}  \left (g_{[s]}(r_{j-1}) \right)_{K^{j+2},l} \right \}}_{\ell^{s\left (\frac{p}{s}\right )',t\left (\frac{q}{t}\right )'}}.
		$$
    \end{corollary}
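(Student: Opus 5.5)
We follow the scheme of Corollary~\ref{coro: discrete caract Tg} and reduce both operators to Proposition~\ref{propo: caracterización G nu} with $n=0$ and $G=g$.

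\emph{The operator $M_g$.} By definition, $M_g:\Apq{p}{q}\to\Apq{s}{t}$ is bounded exactly when
$$
\left(\int_0^1 \left(\int_0^{2\pi}\abs{f(re^{i\t})}^s\abs{g(re^{i\t})}^s\frac{d\theta}{2\pi}\right)^{\frac{t}{s}}\om(r)\,dr\right)^{\frac1t}\lesssim \nm{f}_{\Apq{p}{q}}.
$$
This is \eqref{eq: carleson-type G nu} with $n=0$, $G=g$ and $\nu=\om$. We then apply Proposition~\ref{propo: caracterización G nu}, taking $K=K(\om)$ for both weights, and simplify the weight factor: $\nug(r_j)^{1/t}\omg(r_j)^{-1/q}=\omg(r_j)^{\frac1t-\frac1q}$. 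This gives (ii)$\Leftrightarrow$(iii), and the norm of $M_g$ is comparable to the $\ell^{s(p/s)',t(q/t)'}$ norm of the stated sequence.

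\emph{The operator $S_g$.} We use the Littlewood--Paley formula \eqref{eq: L-P estimate} on both sides. Since $(S_gf)'=f'g$ and $S_gf(0)=0$,
$$
\nm{S_gf}_{\Apq{s}{t}}^t\asymp\int_0^1 M_s^t(r,f'g)(1-r)^t\om(r)\,dr .
$$
The family $\{f' : f\in\Apq{p}{q}\}$ is all of $\H(\D)$, because every $h\in\H(\D)$ is the derivative of its primitive. Moreover, \eqref{eq: L-P estimate} gives $\nm{f}_{\Apq{p}{q}}\asymp \nm{f'}_{A^{p,q}_{\om_{[q]}}}+\abs{f(0)}$. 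Since $S_gf$ depends only on $f'$, we may take $f(0)=0$. Hence $S_g$ is bounded if and only if
$$
M_g:A^{p,q}_{\om_{[q]}}\to A^{s,t}_{\om_{[t]}}
$$
is bounded, with comparable norms. This is \eqref{eq: carleson-type G nu} with $n=0$, $G=g$, weight $\om_{[q]}$ on the domain side and $\nu=\om_{[t]}$.

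Both $\om_{[q]}$ and $\om_{[t]}$ belong to $\DDD$, and \eqref{eq: def Dcheck} holds for them with the same $K$ as for $\om$, as noted in the proof of Corollary~\ref{coro: discrete caract Tg}. So Proposition~\ref{propo: caracterización G nu} applies with that $K$. The key bookkeeping uses $1-r_j=K^{-j}$ and the comparabilities
$$
\widehat{\om_{[q]}}(r_j)\asymp K^{-jq}\omg(r_j),\qquad \widehat{\om_{[t]}}(r_j)\asymp K^{-jt}\omg(r_j).
$$
The upper bounds follow because $(1-r)^\beta\le K^{-j\beta}$ on $[r_j,1)$. For the lower bounds we integrate only over $[r_j,r_{j+1}]$ and use Lemma~\ref{lemma: weight integral radius}, which gives $\int_{r_j}^{r_{j+1}}\om\asymp\omg(r_j)$.

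Substituting these into the sequence of Proposition~\ref{propo: caracterización G nu} (with $n=0$, $G=g$), the factor becomes
$$
K^{j/p}\,\bigl(K^{-jt}\omg(r_j)\bigr)^{1/t}\bigl(K^{-jq}\omg(r_j)\bigr)^{-1/q}=K^{j/p}\,\omg(r_j)^{\frac1t-\frac1q},
$$
because the powers $K^{-j}$ and $K^{j}$ cancel. This is exactly the sequence in (iii), so (i)$\Leftrightarrow$(iii) follows with comparable norms.

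The only delicate point is checking that \eqref{eq: def Dcheck} for $\om_{[\beta]}$ holds with the same $K$, which is what lets us use a single $K$ throughout. If that fails for the given $K$, we replace $K$ by a larger integer power $K^m$: condition \eqref{eq: def Dcheck} persists under iteration, and the resulting discrete norms are comparable after the obvious regrouping of annuli and arcs, as in \eqref{eq: trick scale}.
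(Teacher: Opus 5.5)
Your proof is correct and follows the paper's route. For $M_g$ you apply the characterization of \eqref{eq: carleson-type G nu} with $n=0$, $G=g$, $\nu=\om$; for $S_g$ you use \eqref{eq: L-P estimate} to reduce to $M_g:A^{p,q}_{\om_{[q]}}\to A^{s,t}_{\om_{[t]}}$, and $\widehat{\om_{[\beta]}}(r_j)\asymp K^{-j\beta}\omg(r_j)$ makes the powers of $K$ cancel.

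Two small remarks. First, $\{f':f\in\Apq{p}{q}\}$ is $A^{p,q}_{\om_{[q]}}$, not all of $\H(\D)$; your argument only uses the former. Second, the fallback on $K$ is unnecessary: with $r'=1-\frac{1-r}{K}$ one has $\int_r^{r'}(1-s)^\beta\om(s)\,ds\ge(1-r')^\beta(C-1)\omg(r')\ge(C-1)\widehat{\om_{[\beta]}}(r')$, so \eqref{eq: def Dcheck} holds for $\om_{[\beta]}$ with the same $K$ and $C$.
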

	\section{ Proof of Theorem~\ref{th2: charact Tg}}\label{sec:continuous}
In this section, we provide a detailed proof of  Theorem~\ref{th2: charact Tg}.  Bearing in mind Corollary~\ref{coro: discrete caract Sg Mg}, similar 
 arguments work to prove Theorem~\ref{th2: charact SgMg}, so we omit its proof.

We begin proving the sufficiency of the continuous conditions as an straightforward consequence of Hölder's inequality and  the classical Hardy-Littlewood inequality \eqref{HL}.
	\begin{proposition}\label{propo: suff bound omega en D}
		Let $0<p,q,s,t<\infty$,  $g\in\H(\D)$, $\om\in\DDD$ and denote $\frac{1}{\tilde{p}}=\frac{1}{s}-\frac{1}{p}$ and $\frac{1}{\tilde{q}}=\frac{1}{t}-\frac{1}{q}$. Then, the following are sufficient conditions so that $T_g:\Apq{p}{q}\to \Apq{s}{t}$ is bounded:
		\begin{itemize}
			\item[(a)] If $p\leq s$ and $q\leq t$, 
			$$
			\sup_{z\in\D} \abs{g'(z)}\omg(z)^{\frac{1}{\tilde{q}}}(1-|z|)^{1+\frac{1}{\tilde{p}}}<\infty;
			$$
			\item[(b)] If $s<p$ and $q\leq t$,
			$$
			\sup_{0\le r<1} (1-r)\omg(r)^{\frac{1}{\tilde{q}}}M_{\tilde{p}}(r,g')<\infty;
			$$
			\item[(c)] If $p\leq s$ and $t<q$,
			$$
			|g'(z)|(1-|z|)^{1+\frac{1}{\tilde{p}}}\in L_\om^{\infty, \tilde{q}};
			$$
			\item[(d)] If $s<p$ and $t<q$,
			$$
			g\in\Apq{\tilde{p}}{\tilde{q}}.
			$$
		\end{itemize}  
		Moreover,
		$$
		\nm{T_g}_{\Apq{p}{q}\to \Apq{s}{t}}\lesssim \rho_{p,q,s,t,\om}(g)=\begin{cases}
			\sup_{z\in\D} \abs{g'(z)}\omg(z)^{\frac{1}{\tilde{q}}}(1-\abs{z})^{1+\frac{1}{\tilde{p}}}, & \text{if}\; p\leq s, q\leq t \\ \sup_{0\le r<1} (1-r)\omg(r)^{\frac{1}{\tilde{q}}}M_{\tilde{p}}(r,g'),
			& \text{if}\;s<p, q\leq t \\ \nm{ (1-|\cdot|)^{1+\frac{1}{\tilde{p}}}\;g'}_{L_\om^{\infty, \tilde{q}}},
			& \text{if}\;p\leq s, t<q \\ \nm{g-g(0)}_{\Apq{\tilde{p}}{\tilde{q}}}
			& \text{if}\;s<p,t<q.
		\end{cases} 
		$$
	\end{proposition}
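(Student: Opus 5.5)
By \eqref{eq: L-P estimate} applied in $\Apq{s}{t}$, and since $(T_gf)'=fg'$ and $T_gf(0)=0$, it suffices to prove
$$
\int_0^1 M_s^t(r,fg')(1-r)^t\om(r)\,dr\lesssim \rho_{p,q,s,t,\om}(g)^t\,\nm{f}_{\Apq{p}{q}}^t .
$$
We first estimate $M_s(r,fg')$ for each fixed $r$, and then integrate in $r$ against $(1-r)^t\om(r)\,dr$, using H\"older's inequality in $r$ whenever $t<q$.

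\emph{Angular step.} If $s<p$, H\"older's inequality with exponents $p/s$ and $\tilde p/s$ gives
$$
M_s(r,fg')\le M_p(r,f)\,M_{\tilde p}(r,g').
$$
If $p\le s$, we instead use $M_s(r,fg')\le M_\infty(r,g')M_s(r,f)$. We then apply \eqref{HL} with $\rho=\frac{1+r}{2}$ and $r\ge \tfrac12$ (small $r$ is handled trivially):
$$
M_s(r,f)\lesssim (1-r)^{-1/\tilde p}M_p\Big(\tfrac{1+r}{2},f\Big).
$$
Here $\frac1p-\frac1s=-\frac1{\tilde p}$, and the case $p=s$ is trivial. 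So in every case
$$
M_s(r,fg')\lesssim \Phi(r)\,M_p\Big(\tfrac{1+r}{2},f\Big),
$$
where $\Phi(r)=M_{\tilde p}(r,g')$ if $s<p$ and $\Phi(r)=(1-r)^{-1/\tilde p}M_\infty(r,g')$ if $p\le s$. For uniformity we use $M_p(r,f)\le M_p(\frac{1+r}2,f)$ in the case $s<p$ as well.

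\emph{Radial step when $q\le t$.} Here $\tilde q\le\infty$, with $\frac1{\tilde q}=\frac1t-\frac1q\ge 0$. Cases (a) and (b) give
$$
(1-r)\Phi(r)\lesssim \rho\,\omg(r)^{-1/\tilde q}.
$$
This reduces matters to showing
$$
\int_0^1 M_p^t\Big(\tfrac{1+r}{2},f\Big)\omg(r)^{-t/\tilde q}\om(r)\,dr\lesssim\nm{f}^t_{\Apq{p}{q}} .
$$
This is the main obstacle, since it is an embedding $\Apq pq\subset A^{p,t}_{\om\omg^{-t/\tilde q}}$ with $t\ge q$. We would prove it by the standard pointwise growth estimate
$$
M_p(x,f)\lesssim \omg(x)^{-1/q}\nm{f}_{\Apq pq}.
$$
This follows from monotonicity of $M_p$ and $\omg(x)\lesssim\int_x^{(1+x)/2}\om$, which holds by Lemma~\ref{lemma: weight integral radius} after iterating finitely many times. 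We then write
$$
M_p^t=M_p^{t-q}M_p^q
$$
and use $\omg(\frac{1+r}2)\asymp\omg(r)$ from $\om\in\DD$. The powers of $\omg$ cancel exactly, because $\frac{t-q}{q}=\frac{t}{\tilde q}$. What remains is
$$
\int_0^1 M_p^q\big(\tfrac{1+r}2,f\big)\om(r)\,dr\lesssim \nm{f}_{\Apq pq}^q .
$$
This last bound follows from the change of variables $x=\frac{1+r}{2}$ and a discretization over the annuli $[r_{j-1},r_j)$, using $\om\in\DDD$ and Lemma~\ref{lemma: weight integral radius} to compare the $\om$-mass of the annuli.

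\emph{Radial step when $t<q$.} Cases (c) and (d) are handled by H\"older's inequality in $r$ with exponents $q/t$ and $\tilde q/t$:
$$
\int_0^1 \big((1-r)\Phi(r)\big)^t M_p^t\Big(\tfrac{1+r}2,f\Big)\om(r)\,dr\le\Big(\int_0^1 ((1-r)\Phi)^{\tilde q}\om\Big)^{t/\tilde q}\Big(\int_0^1 M_p^q\big(\tfrac{1+r}2,f\big)\om\Big)^{t/q}.
$$
The first factor is $\rho^t$: in case (c) directly from the definition of $L^{\infty,\tilde q}_\om$, and in case (d) by \eqref{eq: L-P estimate} applied to $g$ in $\Apq{\tilde p}{\tilde q}$. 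The second factor is bounded exactly as in the previous step.
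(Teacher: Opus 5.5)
Your argument is essentially the paper's proof. In the angular variable you use H\"older's inequality when $s<p$, and $M_\infty(r,g')$ together with \eqref{HL} at $\rho=\frac{1+r}{2}$ when $p\le s$. For $q\le t$ you apply the growth bound \eqref{eq: integral mean pointwise} to $M_p^{t-q}$ and use $\omg(r)\asymp\omg(\frac{1+r}{2})$. For $t<q$ you use H\"older's inequality in $r$ with exponents $\frac qt$ and $\frac{\tilde q}{t}$.

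The one deviation is minor. You prove $\int_0^1M_p^q(\frac{1+r}{2},f)\om(r)\,dr\lesssim\nm{f}^q_{\Apq{p}{q}}$ by discretizing over $[r_{j-1},r_j)$. That works, using $\frac{1+r}{2}<r_{j+1}$ for $r<r_j$ and $\int_{r_{j-1}}^{r_j}\om\le\omg(r_{j-1})\lesssim\omg(r_{j+1})\lesssim\int_{r_{j+1}}^{r_{j+2}}\om$, but it needs both $\DD$ and $\Dd$. The paper integrates by parts and needs only $\DD$. In (b) and (d) the paper keeps $M_p(r,f)$, so this step is not needed there at all.

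Please correct the following slips.
\begin{itemize}
\item Since $\frac1p-\frac1s=-\frac1{\tilde p}$, \eqref{HL} gives $M_s(r,f)\lesssim(1-r)^{1/\tilde p}M_p(\frac{1+r}{2},f)$, with $\frac1{\tilde p}\le 0$. The bound with $(1-r)^{-1/\tilde p}$ that you wrote is false for $p<s$ (try $f\equiv1$). Hence $\Phi(r)=(1-r)^{1/\tilde p}M_\infty(r,g')$, and then $(1-r)\Phi(r)=(1-r)^{1+1/\tilde p}M_\infty(r,g')$ is exactly the quantity controlled in (a) and (c).
\item When $q\le t$ one has $\frac1{\tilde q}\le0$, not $\ge0$. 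The identity behind the cancellation is $\frac{t-q}{q}=-\frac{t}{\tilde q}$: the factor $\omg(r)^{-t/\tilde q}=\omg(r)^{(t-q)/q}$ is offset by $\omg(\frac{1+r}{2})^{-(t-q)/q}$ from the growth bound.
\item The growth bound follows from monotonicity of $M_p$ by integrating over all of $[x,1)$; this is exactly \eqref{eq: integral mean pointwise}. Your auxiliary claim $\omg(x)\lesssim\int_x^{(1+x)/2}\om$ is false in general for $\om\in\DDD$. For example, the indicator of $\bigcup_{j\ge0}[1-4^{-j},1-2\cdot4^{-j-1})$ lies in $\DDD$ since $\omg(r)\asymp1-r$, yet it vanishes on $[x,\frac{1+x}{2}]$ for $x=1-2\cdot4^{-j-1}$. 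Lemma~\ref{lemma: weight integral radius} only yields the claim with $\frac{1+x}{2}$ replaced by $1-\frac{1-x}{K}$.
\end{itemize}
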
	   
	\begin{proof}
		We will use that
 \begin{equation}\label{eq: integral mean pointwise}
		M_p(r,f)\leq \frac{\nm{f}_{\Apq{p}{q}}}{\omg(r)^{\frac{1}{q}}},\quad f\in \Apq{p}{q},\quad 0\leq r <1.
		\end{equation}
		(a) If $p\leq s$ and $q \leq t$, joining \eqref{eq: L-P estimate},  \eqref{eq: integral mean pointwise} and \eqref{HL} and  we obtain that
		\begin{equation*}
			\begin{split}
				\nm{T_g(f)}_{\Apq{s}{t}}^t &\asymp \int_0^1 M_s^t(r,f\cdot g')(1-r)^t\om(r)dr\\ &\leq \rho_{p,q,s,t,\om}(g)^t\int_0^1 M_s^t(r,f) (1-r)^{\frac{t}{p}-\frac{t}{s}}\omg(r)^{\frac{t}{q}-1}\om(r)dr \\
				&\lesssim \rho_{p,q,s,t,\om}(g)^t\int_0^1 M_p^t\left (\frac{1+r}{2},f \right)\omg(r)^{\frac{t}{q}-1}\om(r)dr \\ &\leq \rho_{p,q,s,t,\om}(g)^t\nm{f}_{\Apq{p}{q}}^{t-q} \int_0^1  M_p^q\left (\frac{1+r}{2},f \right)\frac{\omg(r)^{\frac{t}{q}-1}}{\omg\left (\frac{1+r}{2}\right )^{\frac{t}{q}-1}}\om(r)dr\\ &\lesssim \rho_{p,q,s,t,\om}(g)^t\nm{f}_{\Apq{p}{q}}^t,\quad f\in\Apq{p}{q},
			\end{split}
		\end{equation*}
		where the last step follows from the fact that $\om\in\DD$ and an integration by parts. \\
		
		(b) If $s<p$ and $q\leq t$, by \eqref{eq: L-P estimate}, Hölder's inequality, the hypothesis and \eqref{eq: integral mean pointwise} 
		\begin{equation*}
			\begin{split}
				\nm{T_g(f)}_{\Apq{s}{t}}^t &\asymp \int_0^1 M_s^t(r,f\cdot g')(1-r)^t\om(r)dr\\ &\leq \int_0^1 M_p^t(r,f)M_{\tilde{p}}^t(r,g')(1-r)^t\om(r)dr \\ 
				&\leq \rho_{p,q,s,t,\om}(g) \int_0^1 M_p^t(r,f) \omg(r)^{\frac{t}{q}-1}\om(r)dr \\ &\leq \rho_{p,q,s,t,\om}(g)^t\nm{f}_{\Apq{p}{q}}^{t-q}\int_0^1 M_p^q(r,f)\om(r)dr\\ 
				&= \rho_{p,q,s,t,\om}(g)^t\nm{f}_{\Apq{p}{q}}^t,\quad f\in\Apq{p}{q}.
			\end{split}
		\end{equation*}
		
		(c) If $p\leq s$ and $t<q$, by \eqref{eq: L-P estimate},  \eqref{HL}, Hölder's inequality and the hypothesis
		\begin{equation*}
			\begin{split}
				\nm{T_g(f)}_{\Apq{s}{t}}^t &\asymp \int_0^1 M_s^t(r,f\cdot g')(1-r)^t\om(r)dr\\ &\leq \int_0^1 M_s^t(r,f)M_\infty^t(r,g')(1-r)^t\om(r)dr \\
				&\lesssim \int_0^1 M_p^t\left (\frac{1+r}{2},f\right )M_\infty^t(r,g')(1-r)^{t\left (1+\frac{1}{\tilde{p}} \right)}\om(r)dr \\
				&\leq  \rho_{p,q,s,t,\om}(g)^t \left( \int_0^1 M_p^q\left (\frac{1+r}{2},f\right )\om(r)dr\right)^{t/q} \\
				&\lesssim \rho_{p,q,s,t,\om}(g)^t \nm{f}_{\Apq{p}{q}}^t,\quad f\in\Apq{p}{q},
			\end{split}
		\end{equation*}
		where the last step follows from the fact that $\om\in\DD$ and an integration by parts. 
		
		(d) If $s<p$ and $t<q$, by \eqref{eq: L-P estimate} and two applications of Hölder's inequality
		\begin{equation*}
			\begin{split}
				\nm{T_g(f)}_{\Apq{s}{t}}^t &\asymp \int_0^1 M_s^t(r,f\cdot g')(1-r)^t\om(r)dr\\ &\leq \int_0^1 M_p^t(r,f)M_{\tilde{p}}^t(r,g')(1-r)^t\om(r)dr \\
				&\leq \nm{f}_{\Apq{p}{q}}^t \left (\int_0^1 M_{\tilde{p}}^{\tilde{q}}(r,g')(1-r)^{\tilde{q}}\om(r)dr \right)^{\frac{t}{\tilde{q}}} \\
				&\asymp \rho_{p,q,s,t,\om}(g)^t \nm{f}_{\Apq{p}{q}}^t ,\quad f\in\Apq{p}{q}.
			\end{split}
		\end{equation*}
	\end{proof}
	Now we are ready to provide a proof of Theorem~\ref{th2: charact Tg}. It is worth mentioning that in some of the cases (a)-(d),  a proof of the implication (iii)$\Rightarrow$(i) can be obtained using different techniques to those employed below. For instance,
in the
case (a),  a proof of (iii)$\Rightarrow$(i) can be obtained testing on  an appropiate family of analytic functions $\{f_a\}_{a\in \D}$.  

	\begin{Prf}\emph{Theorem~\ref{th2: charact Tg}}
		
		The equivalence between (i) and (ii) follows from Corollary~\ref{coro: discrete caract Tg},  and (iii) implies (i) follows from Proposition~\ref{propo: suff bound omega en D}. It remains to prove that (ii) implies (iii) and 
		\begin{equation}\label{eq: sequence norm p<=s,q<=t}
		 \rho_{p,q,s,t,\om}(g) \lesssim \nm{\left \{K^{j(\frac{1}{p}-1)}\omg(r_j)^{\frac{1}{\tilde{q}}} \left (g'_{[s]}(r_{j-1}) \right)_{K^{j+2},l} \right \}}_{\ell^{s\left (\frac{p}{s}\right )',t\left (\frac{q}{t}\right )'}},
		\end{equation}
for each of the four cases (a), (b), (c) and (d).

We  will deal with each case by separated ways. In all of them we will use that for
$N,M\in\N$ with $N\leq M \leq C N$,  there exist two positive constants $C_1(C,s,q),C_2(C,s,q)$, such that
		\begin{equation}\begin{split}\label{eq:amalgama}
		C_1\nm{\{\left (f_{[s]}(r) \right )_{N,l}\}_{l=0}^{N-1}}_{\ell^q} & \leq \nm{\{\left (f_{[s]}(r) \right )_{M,l}\}_{l=0}^{M-1}}_{\ell^q} 
\\ & \leq C_2\nm{\{\left (f_{[s]}(r) \right )_{N,l}\}_{l=0}^{N-1}}_{\ell^q},\, f\in\H(\D),\, 0\leq r <1.
		\end{split}\end{equation}
		In particular, we will use \eqref{eq:amalgama} for $N=E\left (\frac{1}{r_{j+1}-r_j} \right )$ and  $M=K^{j+4}$ for all $j\in\N$. 
		
	\textbf{(a)}  If $p\leq s$ and $q\leq t$, then 
\begin{equation*}\begin{split}
	&	\nm{\left \{K^{j(\frac{1}{p}-1)}\omg(r_j)^{\frac{1}{\tilde{q}}} \left (g'_{[s]}(r_{j-1}) \right)_{K^{j+2},l} \right \}}_{\ell^{s\left (\frac{p}{s}\right )',t\left (\frac{q}{t}\right )'}}
\\ &= \sup_{j\in\N} K^{j(\frac{1}{p}-1)}\omg(r_j)^{\frac{1}{\tilde{q}}} \nm{\set{  \left (g'_{[s]}(r_{j-1}) \right)_{K^{j+2},l}}_{l=0}^{K^{j+2}-1}}_{\ell^\infty}
	\end{split}\end{equation*}
		and
		$$
		\rho_{p,q,s,t,\om}(g) =\sup_{z\in\D} \abs{g'(z)}\omg(z)^{\frac{1}{\tilde{q}}}(1-\abs{z})^{1+\frac{1}{\tilde{p}}} = \sup_{0\leq r <1}(1-r)^{1+\frac{1}{\tilde{p}}} \omg(r)^{\frac{1}{\tilde{q}}}M_\infty(r,g').
		$$
		For $0\leq r <1$ let choose $j\in \N$ such that $r_{j-1}\leq r < r_j$. Then, by Lemma~\ref{lemma: caract dgorro}
		$$
		(1-r)^{1+\frac{1}{\tilde{p}}}\omg(r)^{\frac{1}{\tilde{q}}}M_\infty(r,g') \lesssim K^{-j(1+\frac{1}{\tilde{p}})}\omg(r_j)^{\frac{1}{\tilde{q}}}M_\infty(r_j,g') = K^{j(\frac{1}{p}-1)}\omg(r_j)^{\frac{1}{\tilde{q}}}K^{-\frac{j}{s}}M_\infty(r_j,g').
		$$
		Now by Theorem~\ref{th1: improvement} and \eqref{eq:amalgama}
		$
		M_\infty(r_j,g') \lesssim K^{\frac{j}{s}} \nm{ \left \{\left (g'_{[s]}(r_{j+1}) \right )_{K^{j+4},l} \right\}_{l=0}^{K^{j+4}-1}}_{\ell^\infty}. 
		$
Therefore, using   Lemma~\ref{lemma: caract dgorro} again
		\begin{equation*}
			\begin{split}
		(1-r)^{1+\frac{1}{\tilde{p}}}\omg(r)^{\frac{1}{\tilde{q}}}M_\infty(r,g') &\lesssim K^{(j+2)(\frac{1}{p}-1)}\omg(r_{j+2})^{\frac{1}{\tilde{q}}}
\nm{ \left \{\left (g'_{[s]}(r_{j+1}) \right )_{K^{j+4},l} \right\}_{l=0}^{K^{j+4}-1}}_{\ell^\infty} \\
		&\leq \nm{\left \{K^{j(\frac{1}{p}-1)}\omg(r_j)^{\frac{1}{\tilde{q}}}  \left (g'_{[s]}(r_{j-1}) \right)_{K^{j+2},l}\right \}}_{\ell^{s\left (\frac{p}{s}\right )',t\left (\frac{q}{t}\right )'}}.
			\end{split}
		\end{equation*}
		The proof of this case concludes by taking supremum on $r$. 
		
		\textbf{(b)} If $s<p$ and $q\leq t$, then 
\begin{equation*}
\begin{split}
& \nm{\left\{
K^{j\left(\frac{1}{p}-1\right)}
\omg(r_j)^{\frac{1}{\tilde{q}}}
\left(g'_{[s]}(r_{j-1})\right)_{K^{j+2},l}
\right\}}_{\ell^{s\left(\frac{p}{s}\right)',\, t\left(\frac{q}{t}\right)'}}
\\
&=
\sup_{j\in\N}
K^{j\left(\frac{1}{p}-1\right)}
\omg(r_j)^{\frac{1}{\tilde{q}}}
\nm{
\left\{
\left(g'_{[s]}(r_{j-1})\right)_{K^{j+2},l}
\right\}_{l=0}^{K^{j+2}-1}
}_{\ell^{\tilde{p}}}
\end{split}
\end{equation*}
		and
		$$
		\rho_{p,q,s,t,\om}(g) =\sup_{0\le r<1} (1-r)\omg(r)^{\frac{1}{\tilde{q}}}M_{\tilde{p}}(r,g').
		$$
		For $0\leq r <1$ let choose $j\in \N$ such that $r_{j-1}\leq r < r_j$,  then by Lemma~\ref{lemma: caract dgorro}
		$$
		(1-r)\omg(r)^{\frac{1}{\tilde{q}}}M_{\tilde{p}}(r,g') \lesssim K^{-j}\omg(r_j)^{\frac{1}{\tilde{q}}}M_{\tilde{p}}(r_j,g') = K^{j ( \frac{1}{p}-1)}\omg(r_j)^{\frac{1}{\tilde{q}}}K^{-\frac{j}{p}}M_{\tilde{p}}(r_j,g').
		$$
		On the other hand, since $\tilde{p}=s\frac{p}{p-s}>s$, it follows from   Theorem~\ref{th1: improvement} and and \eqref{eq:amalgama} that
		$$
		M_{\tilde{p}}(r_j,g') \lesssim 
 K^{\frac{j}{p}} \nm{ \left\{\left (g'_{[s]}(r_{j+1}) \right )_{K^{j+4},l} \right\}_{l=0}^{K^{j+4}-1}}_{\ell^{\tilde{p}}}.
		$$
		Joining these two inequalities, and applying Lemma~\ref{lemma: caract dgorro}
\begin{equation*}
\begin{split}
&(1-r)\omg(r)^{\frac{1}{\tilde{q}}} M_{\tilde{p}}(r,g')
\\ &\lesssim
K^{(j+2)\left(\frac{1}{p}-1\right)}
\omg(r_{j+2})^{\frac{1}{\tilde{q}}}
\nm{
\left\{
\left(g'_{[s]}(r_{j+1})\right)_{K^{j+4},l}
\right\}_{l=0}^{K^{j+4}-1}
}_{\ell^{\tilde{p}}}
\\
&\leq
\nm{
\left\{
K^{j\left(\frac{1}{p}-1\right)}
\omg(r_j)^{\frac{1}{t}-\frac{1}{q}}
\left(
(g'_{[s]}(r_{j-1}))
\right)_{K^{j+2},l}
\right\}
}_{\ell^{s\left(\frac{p}{s}\right)',\, t\left(\frac{q}{t}\right)'}} .
\end{split}
\end{equation*}
		and we conclude the proof by taking supremum on $r$.
		
		\textbf{(c)} If $p\leq s$ and $t<q$, then 
		\begin{equation*}\begin{split}
 &\nm{\left\{K^{j\left(\frac{1}{p}-1\right)}\omg(r_j)^{\frac{1}{\tilde{q}}}\left((g'_{[s]}(r_{j-1}))\right)_{K^{j+2},l}\right\} }_{\ell^{s\left(\frac{p}{s}\right)',\, t\left(\frac{q}{t}\right)'}}
\\ & =
\left(
\sum_{j=1}^\infty
K^{j\left(\frac{1}{p}-1\right)\tilde{q}}
\omg(r_j)
\nm{
\left\{
\left(
(g'_{[s]}(r_{j-1}))
\right)_{K^{j+2},l}
\right\}_{l=0}^{K^{j+2}-1}
}_{\ell^\infty}^{\tilde{q}}
\right)^{\frac{1}{\tilde{q}}}
\end{split}\end{equation*}
		and
		$$
		\rho_{p,q,s,t,\om}(g) =\nm{ (1-|\cdot|)^{1+\frac{1}{\tilde{p}}}\;g'}_{L_\om^{\infty, \tilde{q}}} = \left (\int_0^1 M_\infty^{\tilde{q}}(r,g') (1-r)^{\left (1+\frac{1}{\tilde{p}}\right )\tilde{q} }\om(r)dr\right )^{\frac{1}{\tilde{q}}}.
		$$
		Bearing in mind  Lemma~\ref{lemma: weight integral radius} 
	\begin{equation*}
			\begin{split}
				\rho_{p,q,s,t,\om}(g)^{\tilde{q}} &=\sum_{j=1}^\infty \int_{r_{j-1}}^{r_{j}}M_\infty^{\tilde{q}}(r,g') (1-r)^{\left (1+\frac{1}{\tilde{p}} \right)\tilde{q} } \om(r)dr \\
				&\lesssim \sum_{j=1}^\infty K^{j\left (\frac{1}{p}-1 \right)\tilde{q}}\omg(r_j) K^{-j\frac{\tilde{q}}{s}}M_\infty^{\tilde{q}}(r_{j},g').
			\end{split}
		\end{equation*}

		Now, arguing as we did in the case (a),  applying Theorem~\ref{th1: improvement},  \eqref{eq:amalgama} and  Lemma~\ref{lemma: caract dgorro}
		\begin{equation*}
			\begin{split}				\rho_{p,q,s,t,\om}(g)^{\tilde{q}}&\lesssim  \sum_{j=1}^\infty K^{j\left (\frac{1}{p}-1 \right)\tilde{q}}\omg(r_j) 
  \nm{\left\{\left((g'_{[s]}(r_{j+1}))\right)_{K^{j+4},l}\right\}_{l=0}^{K^{j+4}-1}}_{\ell^\infty}^{\tilde{q}}
				\\
				&\lesssim \sum_{j=3}^\infty K^{j\left ( \frac{1}{p}-1\right )\tilde{q}}\omg(r_j) 
\nm{
\left\{
\left(
(g'_{[s]}(r_{j-1}))
\right)_{K^{j+2},l}
\right\}_{l=0}^{K^{j+2}-1}
}_{\ell^\infty}^{\tilde{q}}
				\\ &\leq \nm{\left \{K^{j(\frac{1}{p}-1)}\omg(r_j)^{\frac{1}{t}-\frac{1}{q}} \left(
(g'_{[s]}(r_{j-1}))
\right)_{K^{j+2},l} \right \}}_{\ell^{s\left (\frac{p}{s}\right )',t\left (\frac{q}{t}\right )'}}^{\tilde{q}},
			\end{split}
		\end{equation*}
		concluding the proof of this case.
		
		\textbf{(d)} If $s<p$ and $t<q$, then 
\begin{equation*}\begin{split} 
&\nm{\left\{K^{j\left(\frac{1}{p}-1\right)}\omg(r_j)^{\frac{1}{\tilde{q}}}\left((g'_{[s]}(r_{j-1}))\right)_{K^{j+2},l}\right\}}_{\ell^{s\left(\frac{p}{s}\right)',\, t\left(\frac{q}{t}\right)'}}
\\ &
=
\left(\sum_{j=1}^\infty K^{j\left(\frac{1}{p}-1\right)\tilde{q}}\omg(r_j)\nm{\left\{\left((g'_{[s]}(r_{j-1})\right)_{K^{j+2},l}\right\}_{l=0}^{K^{j+2}-1}}_{\ell^{\tilde{p}}}^{\tilde{q}}\right)^{\frac{1}{\tilde{q}}}
\end{split}\end{equation*}
		Now  by \eqref{eq: L-P estimate} and  Lemma~\ref{lemma: weight integral radius}
		\begin{equation*}
			\begin{split}
				\rho_{p,q,s,t,\om}(g) & =\nm{g-g(0)}_{\Apq{\tilde{p}}{\tilde{q}}}^{\tilde{q}} \asymp
 \int_0^1 M_{\tilde{p}}^{\tilde{q}}(r,g')(1-r)^{\tilde{q}}\om(r)dr
\\ & \asymp
 \sum_{j=1}^\infty  \int_{r_{j-1}}^{r_j} M_{\tilde{p}}^{\tilde{q}}(r,g')(1-r)^{\tilde{q}}\om(r)dr
 \lesssim \sum_{j=1}^\infty K^{-j\tilde{q}} \omg(r_j) M_{\tilde{p}}^{\tilde{q}}(r_j,g') \\
				&= \sum_{j=1}^\infty K^{j(\frac{1}{p}-1)\tilde{q}} \omg(r_j) K^{-j\frac{\tilde{q}}{p}}M_{\tilde{p}}^{\tilde{q}}(r_j,g'). 
			\end{split}
		\end{equation*}
		Applying now Theorem~\ref{th1: improvement} and  \eqref{eq:amalgama}, as we did in case (b),  and Lemma~\ref{lemma: caract dgorro}
		\begin{equation*}
			\begin{split}
				\rho_{p,q,s,t,\om}(g)^{\tilde{q}} &\lesssim\sum_{j=1}^\infty K^{j(\frac{1}{p}-1)\tilde{q}} \omg(r_j)   \nm{\left\{ \left((g'_{[s]}(r_{j+1}))\right)_{K^{j+4},l} \right\}_{l=0}^{K^{j+4}-1}}_{\ell^{\tilde{p}}}^{\tilde{q}}\\
				&\asymp \sum_{j=3}^\infty K^{j(\frac{1}{p}-1)\tilde{q}} \omg(r_j) \nm{\left\{\left((g'_{[s]}(r_{j-1}))\right)_{K^{j+2},l}\right\}_{l=0}^{K^{j+2}-1}}_{\ell^{\tilde{p}}}^{\tilde{q}}
 \\
				&\leq \nm{\left \{K^{j(\frac{1}{p}-1)}\omg(r_j)^{\frac{1}{t}-\frac{1}{q}} \left((g'_{[s]}(r_{j-1}))\right)_{K^{j+2},l}
 \right \}}_{\ell^{s\left (\frac{p}{s}\right )',t\left (\frac{q}{t}\right )'}}^{\tilde{q}}.
			\end{split}
		\end{equation*}
This finishes the proof.

	\end{Prf} 
	
\end{document}